\def\O{\Omega}
\def\mV{\mathcal{V}}
\renewcommand\sp{\mathop{\mathrm{Sp}}\nolimits}
\newcommand{\set}[1]{\lbrace #1 \rbrace}
\newcommand{\jump}[1]{\llbracket #1 \rrbracket}
\newcommand{\mean}[1]{\{#1\}}
\newcommand{\norm}[1]{\lVert#1\rVert}
\newcommand{\n}{\boldsymbol{n}}
\newcommand{\bP}{{\bf P}}
\newcommand{\bT}{{\bf T}}
\def\R{\mathbb R}
\def\H{\mathrm H}
\def\L{\mathrm L}
\def\div{\mathop{\mathrm{div}}\nolimits}
\def\bu{\boldsymbol{u}}
\def\bv{\boldsymbol{v}}
\def\bf{\boldsymbol{f}}
\def\bT{\boldsymbol{T}}
\def\bP{\boldsymbol{P}}
\def\b0{\boldsymbol{0}}
\def\btau{\boldsymbol{\tau}}
\def\bw{\boldsymbol{w}}
\def\mG{\mathcal{G}}
\def\mK{\mathcal{K}}
\def\mV{\mathcal{V}}
\renewcommand\sp{\mathop{\mathrm{Sp}}\nolimits}
\def\div{\mathop{\mathrm{div}}\nolimits}
\newtheorem{remark}{Remark}[section]
\def\HsO{{\H^{s}(\O)}}
\def\LO{\L^2(\O)}
\newcommand\cT{\mathcal{T}}
\newcommand{\cF}{\mathcal{F}}
\newcommand\cP{\mathcal{P}}
\newcommand\DO{\partial\O}
\newcommand{\vertiii}[1]{{\left\vert\kern-0.25ex\left\vert\kern-0.25ex\left\vert #1 
    \right\vert\kern-0.25ex\right\vert\kern-0.25ex\right\vert}}
\renewcommand\H{\mathrm{H}}
\renewcommand\L{\mathrm{L}}
\renewcommand\div{\mathop{\mathrm{div}}\nolimits}
\renewcommand\sp{\mathop{\mathrm{sp}}\nolimits}
\newcommand\gap{\widehat{\delta}}
\crefname{hypothesis}{Hypothesis}{Hypotheses}
\title{Discontinuous Galerkin methods for the acoustic vibration problem\thanks{Submitted to the editors DATE.
\funding{The first   author  was partially supported by  DIUBB through project 2120173 GI/C Universidad del B\'io-B\'io and by 
	ANID-Chile through FONDECYT project 11200529 (Chile). The second author was partially supported by DIUBB
	through project 2120173 GI/C Universidad del B\'io-B\'io and  project Centro
	de Modelamiento Matemático (CMM), FB210005, BASAL funds for centers of excellence. The second and third author were partially supported by ANID-Chile through project Anillo of Computational Mathematics for Desalination Processes ACT210087. }}}
\author{Felipe Lepe\thanks{GIMNAP-Departamento de Matem\'atica, Universidad del B\'io - B\'io, Casilla 5-C, Concepci\'on, Chile. \email{flepe@ubiobio.cl}}
\and David Mora\thanks{GIMNAP-Departamento de Matem\'atica, Universidad del B\'io - B\'io, Casilla 5-C, Concepci\'on, Chile and C$\textrm{I}^2$MA, Universidad de Concepci\'on, Chile. \email{dmora@ubiobio.cl}}
\and Jesus Vellojin\thanks{GIMNAP-Departamento de Matem\'atica, Universidad del B\'io - B\'io, Casilla 5-C, Concepci\'on, Chile. \email{jvellojin@ubiobio.cl}}}
\begin{document}

\maketitle

\begin{abstract}
In two and three dimension we analyze discontinuous Galerkin methods for the acoustic problem. The acoustic fluid  that we consider on this paper is inviscid, leading to a linear eigenvalue problem. The acoustic problem is written, in first place,  in terms of the displacement. Under the approach of the non-compact operators theory, we prove convergence and error estimates for the method when the displacement formulation is considered. We analyze the influence of the stabilization parameter on the computation of the spectrum, where spurious eigenmodes arise when this parameter is not correctly chosen. Alternatively we present the formulation depending only on the pressure,  comparing the performance of the DG methods  with the pure displacement formulation. Computationally, we study the influence of the stabilization parameter on the arising of spurious eigenvalues when the spectrum is computed. Also, we report tests in two and three dimensions where convergence rates are reported, together with a comparison between the displacement and pressure formulations for the proposed DG methods.
\end{abstract}

\maketitle

\section{Introduction}  
In this paper we are interested in the analysis of a discontinuous Galerkin method (DG) to
approximate the natural frequencies of the acoustic problem. The method is 
based in a interior penalization strategy, in which we focus our attention 
since the aforementioned parameter may introduce spurious eigenvalues on the computed spectrum.

The DG discretization for eigenvalues is a ongoing research topic where the pioneer work
related to this subject is \cite{MR2220929}, where under suitable norms that depend on jumps and averages, the authors 
 proved spectral correctness and convergence of the DG method for the Laplace eigenvalue problem for both, the primal and mixed formulations.
 Also, this paper consider symmetric and nonsymmetric DG methods. This study inspired a number of works where DG methods have been considered for other spectral problems \cite{MR2324460, MR2263045, MR2511729, MR3962898, MR4077220}.
 
The contribution of the present work is the development of DG methods for the acoustic problem. This problem is one of the most important on engineering and physics, since the correct knowledge of some acoustic fluids, allows to design different structures as containers, motors, vehicles, aircrafts,  etc., or noise reduction devices, necessary on the industry. These applications have motivated the development of different numerical methods and approaches  such as those of \cite{MR3660301,MR2264552, MR2086168,MR2045461, MR3660301, MR4364794, MR4253807, MR2265217}. Some specific applications that deserve to be  mentioned are those related  to space programs, where  mechanical effects of shock, acceleration, and acoustic vibrations are taken into account on the design of durable structures \cite{osiander2017wireless}. Also, there is the ultrasonic welding, where ultrasonic acoustic vibrations are used to generate heat and pressure to weld two work pieces together \cite{kirstein2013multidisciplinary}. Another use is found in the acoustic emission detectors, which deals with leaking on pipelines by using the energy signal produced by acoustic vibrations when the fluid escapes \cite{bai2014subsea}.
 
 In particular, we are interested in the interior penalization discontinuous Galerkin method (IPDG) in order to obtain a suitable alternative to approximate accurately the eigenfunctions and eigenvalues of the acoustic vibration problem.  In recent years, the IPDG has been applied to different eigenvalue problems as for instance, the Maxwell spectral problem \cite{MR2263045, MR2324460}, the Stokes eigenvalue problem \cite{MR4077220}, or the elasticity eigenvalue problem \cite{MR3962898}, where the accuracy on the approximation of the spectrum is remarkable. The IPDG method results to be easy to handle from the computational point of view. This allows the consideration of arbitrary polynomial degrees in two dimensional (resp. three dimensional) convex (resp. non-convex) domain, together with single or mixed boundary conditions. However, the main drawback of the IPDG methods for eigenvalue problems is the correct choice of the penalization parameter, since an incorrect selection of the parameter may introduce spurious eigenvalues. This is a inherent concern that comes with numerical methods that depend on some stabilizations, as for instance the virtual element method (VEM), where in \cite{MR3660301,MR3340705, MR4229296}, the authors show in their numerical tests  that spurious eigenvalues arise when the stabilization parameter changes. The same is observed with IPDG methods such as those of \cite{MR3854050,MR4077220}, where an analysis of spurious eigenvalues is performed. This is important to take into account, since in real applications, the correct frequencies must be well approximated to avoid undesirable physical phenomenons, as for instance, the resonance of certain structures, or the incorrect development of noise insulators.
 
The available  literature for IPDG methods for eigenvalue problems indicates that the correct choice of the  aforementioned stabilization parameters depend strongly on the configuration of the problem of interest. This implies that, as the development of the theoretical well posedness of the problem is analyzed, the domain, physical parameters and boundary conditions, need to be taken into account. Also, the polynomial degree for the approximation plays an important role when the stabilization is chosen, since when the polynomial approximation increases, the pollution of the spectrum vanishes, as the computational results available on, for instance, \cite{MR3854050,MR4077220} suggests. 
 
 With these considerations at hand, our aim is to prove that a IPDG approach is a suitable technique to approximate the natural frequencies of the acoustic eigenvalue problem, where the numerical method, whether symmetric or nonsymmetric, approximates accurately the spectrum, and the stabilization parameter can be chosen with no major problems, according to the needs of the problem. In this sense, we will apply our IPDG methods for the two classic formulations of the acoustic problem: the displacement and pressure formulations, in order to compare the performance of the discontinuous  numerical schemes when different unknowns are considered in the mathematical formulation.

The paper is organized as follows: in Section~\ref{MAIN_PROBLEM}, we
present the acoustic eigenvalue problem, the displacement formulation and summarize some
important results such as the solution operator,  regularity properties and the corresponding spectral characterization.
Section~\ref{sec:DG} contains the development of the DG methods, presenting definitions associated to the mesh elements, norms
and technical results that are needed for the analysis. Also, we introduce the DG discretization for the acoustic problem and
the discrete solution operator that will approximate the continuous one. Also, the spectral analysis is presented, where approximation results and error estimates for the eigenvalues and eigenfunctions are presented.
 In Section \ref{sec:pressure} we introduce the pressure formulation and its DG discretizations, which also are symmetric and nonsymmetric.  Finally, in Section~\ref{sec:numerics}, we perform a number of numerical tests in two and three dimensions where we analyze, in one hand, the influence of the stabilization parameters on the computation of the spectrum, performing a spurious analysis and, on the other hand, computation of orders of convergence for the eigenvalues. In this section we also compare the performance of the displacement and pressure DG formulations with a benchmark test, where the computational cost is mesaured trough the cpu-time, sparsity and problem size.
 
 \subsection{Notations}
Throughout this work, $\O$ is a generic Lipschitz bounded domain of $\R^\texttt{d}$, where $\texttt{d}\in\{2,3\}$. For $s\geq 0$,
$\norm{\cdot}_{s,\O}$ stands indistinctly for the norm of the Hilbertian
Sobolev spaces $\HsO$ or $\HsO^\texttt{d}$ with the convention
$\H^0(\O):=\LO$.  If ${X}$ and ${Y}$ are normed vector spaces, we write ${X} \hookrightarrow {Y}$ to denote that ${X}$ is continuously embedded in ${Y}$. We denote by ${X}'$ and $\|\cdot\|_{{X}}$ the dual and the norm of ${X}$, respectively.
Finally,
we employ $\boldsymbol{0}$ to denote a generic null vector and
the relation $\texttt{a} \lesssim \texttt{b}$ indicates that $\texttt{a} \leq C \texttt{b}$, with a positive constant $C$ which is independent of $\texttt{a}$, $\texttt{b}$, and the size of the elements in the mesh. The value of $C$ might change at each occurrence. We remark that we will write the constant $C$ only when is needed.
 %
%

\section{The model problem}
\label{MAIN_PROBLEM}
 The acoustic problem reads
as follows: Find $\omega\in\mathbb{R}$, the displacement $\bu$ and the pressure $p$ on a domain $\Omega\subset\mathbb{R}^{\texttt{d}}$, such that 
\begin{equation}\label{def:acustica}
\left\{
\begin{array}{rcll}
\nabla p-\omega^2\rho(x)\bu & = & \boldsymbol{0}&\text{in}\,\O\\
p+\rho(x) c^2\div\bu & = & 0 &\text{in}\,\O\\
\bu\cdot\boldsymbol{n}&=&0&\text{on}\,\partial\O,
\end{array}
\right.
\end{equation}
where $\rho(x)$ is the density defined in $x\in\O$, $c$ is the sound speed and $\boldsymbol{n}$ is the outward unitary vector. As an assumption on the density, we consider two constants $\overline{\rho}$ and $\underline{\rho}$, both positives,  such that,  $\underline{\rho}\leq\rho(x)\leq\overline{\rho}$, for $x\in\O$.

Let us introduce  the displacement formulation for \eqref{def:acustica}.
Let us define the space
$$
\mathcal{V}:=\{\boldsymbol{v}\in \H(\div,\O)\,:\, \boldsymbol{v}\cdot\boldsymbol{n}=0\,\,\text{on}\,\,\partial\O\}.
$$

With this space at hand, a variational formulation for the acoustic problem is the following: Find $(\omega,\bu)\in\mathbb{R}\times\mV$, with $\bu\neq \boldsymbol{0}$, such that
\begin{equation}
\label{eq:acustic1}
\displaystyle\int_{\O}\rho(x) c^2\div\bu\div\boldsymbol{v}=\omega^2\int_{\O}\rho(x)\bu\cdot\bv\quad\forall\bv\in\mV.
\end{equation}
Defining $\widehat{\lambda
}:=\omega^2$, we immediately deduce that the eigenspace associated to $\widehat{\lambda}=0$ is
$$
\mK:=\{\bv\in\mV\,:\,\div\bv=0\,\,\text{in}\,\O\}.
$$
We observe that \eqref{eq:acustic1} is not an elliptic problem. Hence, with a shift argument we obtain the following problem: Find $(\lambda,\bu)\in\mathbb{R}\times\mV$, with $\bu\neq \boldsymbol{0}$, such that
$$
\displaystyle\int_{\O}\rho(x) c^2\div\bu\div\boldsymbol{v}+\int_{\O}\rho(x)\bu\cdot\bv=\lambda\int_{\O}\rho(x)\bu\cdot\bv\quad\forall\bv\in\mV,
$$
where $\lambda:=1+\widehat{\lambda}$.

Let us  introduce the bilinear forms $a:\mV\times\mV\rightarrow\mathbb{R}$ and $b:\mV\times\mV\rightarrow\mathbb{R}$ defined by
$$
\displaystyle a(\bv,\bw):=\int_{\O}\rho(x) c^2\div\bv\div\bw+\int_{\O}\rho(x)\bv\cdot\bw\quad\forall\bv,\bw\in\mV.
$$
and
$$
b(\bv,\bw):=\int_{\O}\rho(x) \bv\cdot\bw,\quad\forall\bv,\bw\in\mV.
$$
With these bilinear forms at hand, problem \eqref{eq:acustic1} is rewritten as follows: Find $(\lambda,\bu)\in\mathbb{R}\times\mV$, with $\bu\neq \boldsymbol{0}$, such that
\begin{equation}
\label{eq:acustic21}
a(\bu,\bv)=\lambda b(\bu,\bv)\quad\forall\bv\in \mV.
\end{equation}

Clearly $a(\cdot,\cdot)$ is continuous, in the sense that there exists a positive constant $M:=\|\rho\|_{\infty,\O}\min\{c^2,1\}$ such that $|a(\bu,\bv)|\leq M\|\bu\|_{\div,\O}\|\bv\|_{\div,\O}$.
Observe that for any $\bv\in\mV$ there holds
\begin{equation}
\label{elipticCont}
a(\bv,\bv)\geq \underline{\rho} c^2\|\div\bv\|_{0,\O}^2+\underline{\rho}\|\bv\|_{0,\O}^2\geq\underbrace{\min\{\underline{\rho} c^2,\underline{\rho}\}}_{\alpha^c}\|\bv\|_{\div,\O}^2,
\end{equation}
implying the coercivity of $a(\cdot,\cdot)$ with a constant $\alpha^c:=\min\{\underline{\rho} c^2,\underline{\rho}\}$.
This allows us to introduce  the so called solution operator associated to \eqref{eq:acustic21}, defined by $\bT:\mV\rightarrow\mV$ where  for a given  $\bf\in\mV$, we have $\bT\bf=\widehat{\bu}$, where $\widehat{\bu}$ is the solution of the source problem 
$$
a(\widehat{\bu},\bv)= b(\bf,\bv)\quad\forall\bv\in \mV.
$$
Note that operator $\bT$ is well defined and  bounded thanks to Lax-Milgram's lemma, non-compact, and selfadjoint respect to $a(\cdot,\cdot)$ and $b(\cdot,\cdot)$. Moreover, $(\lambda,\bu)$ is a solution of \eqref{eq:acustic1} if and only if $(1/\lambda,\bu)$ is an eigenpair of $\bT$.

Let us remark that  since we have the space $\mK$ at hand, there exists an operator $\bP:\mV\rightarrow\mV$ such that the space $\mV$ can be decomposed as a direct sum given by $\mV=\mK\oplus \bP(\mV)$, where $\bP(\mV)$ is the orthogonal complement of $\mK$ with respect to the inner product defined in $\mV$.

The following technical results is available in \cite{MR3660301, MR1342293}.
\begin{lemma}[Orthogonal complement]
\label{lmm:G}
The space $\bP(\mV)$ is defined by $\bP(\mV):=\{\nabla\,q\,:\, q\in \H^1(\O)\}$ and satisfies
$\mV=\mK\oplus \bP(\mV)$, which is a orthogonal decomposition in $\L^2(\O)^\texttt{d}$ and $\H(\div,\O)$. Moreover, there exists $s\in (1/2,1]$ such that for all $\bv\in\mV$, if $\bv=\boldsymbol{\varphi}+\nabla q$, with $\boldsymbol{\varphi}\in\mK$ and $\nabla q\in \bP(\mV)$, then $\nabla q\in\H^s(\O)^\texttt{d}$ and $\|\nabla q\|_{s,\O}\lesssim\|\div\bv\|_{0,\O}.$
\end{lemma}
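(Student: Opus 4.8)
The plan is to construct the decomposition explicitly through a Neumann problem, verify the stated structural properties by integration by parts, and obtain the regularity shift from elliptic theory. Given $\bv\in\mV$, I would first define the gradient part $\nabla q$ by solving the variational Neumann problem: find $q\in\H^1(\O)/\R$ such that $\int_\O\nabla q\cdot\nabla w=\int_\O\bv\cdot\nabla w$ for all $w\in\H^1(\O)$. This is uniquely solvable on the quotient space by Lax--Milgram, using the Poincaré--Wirtinger inequality for coercivity; the compatibility condition is automatic since testing against constants gives $0=0$. I then set $\boldsymbol{\varphi}:=\bv-\nabla q$, so that $\bv=\boldsymbol{\varphi}+\nabla q$ by construction with $\nabla q\in\{\nabla q:q\in\H^1(\O)\}$, which is the claimed form of $\bP(\mV)$.

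Next I would check that $\boldsymbol{\varphi}\in\mK$. Testing the variational identity against $w\in C_c^\infty(\O)$ shows $\div\boldsymbol{\varphi}=0$ in the distributional sense, hence $\boldsymbol{\varphi}\in\H(\div,\O)$; integrating by parts with Green's formula for $\H(\div,\O)$ then gives $\langle\boldsymbol{\varphi}\cdot\bn,w\rangle_{\partial\O}=0$ for all $w\in\H^1(\O)$, and surjectivity of the trace onto $\H^{1/2}(\partial\O)$ yields $\boldsymbol{\varphi}\cdot\bn=0$. Thus $\boldsymbol{\varphi}\in\mK$ and consequently $\nabla q=\bv-\boldsymbol{\varphi}\in\mV$ with $\nabla q\cdot\bn=0$. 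For orthogonality I would use that for any $\boldsymbol{\psi}\in\mK$ and $q\in\H^1(\O)$ with $\nabla q\in\mV$, Green's formula gives $\int_\O\nabla q\cdot\boldsymbol{\psi}=-\int_\O q\,\div\boldsymbol{\psi}+\langle q,\boldsymbol{\psi}\cdot\bn\rangle_{\partial\O}=0$; since in addition $\div\boldsymbol{\psi}=0$, the same pair is orthogonal in the $\H(\div,\O)$ inner product as well. Directness of the sum then follows by choosing $\boldsymbol{\psi}=\boldsymbol{\varphi}$ when $\boldsymbol{\varphi}+\nabla q=\boldsymbol{0}$, forcing both components to vanish.

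Finally, for the regularity estimate I would observe that $q$ solves the homogeneous Neumann problem $\Delta q=\div\bv$ in $\O$ with $\nabla q\cdot\bn=0$ on $\partial\O$, where the data $\div\bv\in\L^2(\O)$ and the identity $\div\nabla q=\div\bv$ follow from $\div\boldsymbol{\varphi}=0$. The main obstacle is precisely this step: on a general Lipschitz domain one cannot expect full $\H^2$ regularity, so I would invoke the elliptic shift theorem for the Neumann Laplacian — as furnished by the cited references — to obtain $q\in\H^{1+s}(\O)$ for some $s\in(1/2,1]$ (with $s=1$ in the convex case) together with the a priori bound $\|q\|_{1+s,\O}\lesssim\|\div\bv\|_{0,\O}$ modulo the additive constant. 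Restricting to the gradient gives $\nabla q\in\H^s(\O)^\texttt{d}$ and $\|\nabla q\|_{s,\O}\lesssim\|\div\bv\|_{0,\O}$, the asserted estimate. The delicate point is tracking that the index $s$ strictly exceeds $1/2$, since this is exactly what guarantees that the normal trace of $\nabla q$ is well defined in $\L^2(\partial\O)$ for the subsequent DG analysis.
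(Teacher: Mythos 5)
Your proof is correct, and it is essentially the argument the paper implicitly relies on: the paper itself gives no proof of this lemma, quoting it as a known result from \cite{MR3660301, MR1342293}, and your construction (variational Neumann problem for $q$, identification of $\boldsymbol{\varphi}=\bv-\nabla q$ as an element of $\mK$ via Green's formula and surjectivity of the trace, orthogonality in both inner products from $\div\boldsymbol{\psi}=0$ and $\boldsymbol{\psi}\cdot\bn=0$, and the elliptic shift theorem for the Neumann Laplacian to get $s\in(1/2,1]$) is precisely the standard proof found in those references. One small point your argument quietly and correctly repairs: as literally written in the lemma, $\{\nabla q\,:\,q\in\H^1(\O)\}$ is not a subset of $\mV$, so the direct sum only makes sense with $\bP(\mV)$ understood as the gradients that additionally lie in $\mV$ --- exactly what your verification that $\nabla q=\bv-\boldsymbol{\varphi}\in\mV$ supplies.
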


Now we recall some important properties that $\bT$ satisfies.
\begin{lemma}[Invariance of $\bT$ on $\bP(\mV)$]
\label{lmm:invariant}
There holds $\bT(\bP(\mV))\subset \bP(\mV).$

Also, there holds
$$
\bT(P(\mV))\subset\left\{\bv\in \H^s(\O)^\texttt{d}\,:\,\div\bv\in \H^1(\O)\right\},
$$
and   for all $\bf\in\mG\cap\mV$, if $\bu=\bT\bf$, the following estimate holds
$$
\|\bu\|_{s,\O}+\|\div\bu\|_{1+s,\O}\lesssim \|\bf\|_{\div,\O},
$$
where this estimate is obtained with a bootstrap argument. Moreover, we have that 
$\bT|_{\bP(\mV)}:\bP(\mV)\rightarrow \bP(\mV)$ is compact.
\end{lemma}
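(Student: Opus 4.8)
The plan is to establish the four assertions in the order stated: the invariance $\bT(\bP(\mV))\subset\bP(\mV)$, then the regularity of the image, then the bootstrap estimate, and finally compactness. The unifying tool is that $\bT$ is selfadjoint and that $\mK$ is exactly its eigenspace for $\widehat{\lambda}=0$.

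First I would record that $\bT$ fixes $\mK$ pointwise: for $\boldsymbol{\varphi}\in\mK$ one has $\div\boldsymbol{\varphi}=0$, so $a(\boldsymbol{\varphi},\bv)=b(\boldsymbol{\varphi},\bv)$ for every $\bv\in\mV$ and hence $\bT\boldsymbol{\varphi}=\boldsymbol{\varphi}$. To prove invariance of the complement I would fix $\bf\in\bP(\mV)$, put $\bu=\bT\bf$, and test the defining identity $a(\bu,\bv)=b(\bf,\bv)$ with an arbitrary $\bv=\boldsymbol{\varphi}\in\mK$. Because $\div\boldsymbol{\varphi}=0$, the $\div$--$\div$ contribution drops and the left-hand side reduces to $b(\bu,\boldsymbol{\varphi})$, while the right-hand side vanishes since $\bf$ is orthogonal to $\mK$. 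Thus $b(\bu,\boldsymbol{\varphi})=0$ for all $\boldsymbol{\varphi}\in\mK$, i.e. $\bu\in\bP(\mV)$. This is just the abstract fact that the orthogonal complement of the invariant subspace $\mK$ is invariant under the selfadjoint operator $\bT$.

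Next, for the regularity and the estimate, I would integrate the $\div$--$\div$ term by parts in $a(\bu,\bv)=b(\bf,\bv)$ and use $\bv\cdot\bn=0$ on $\partial\O$ to read off the strong form $\rho\bu-\nabla(\rho c^2\div\bu)=\rho\bf$. Setting $g:=\rho c^2\div\bu$, the boundary condition $\bu\cdot\bn=\bf\cdot\bn=0$ forces $\partial_n g=0$, and eliminating $\bu$ shows that $g$ solves a scalar second-order elliptic problem with homogeneous Neumann data and right-hand side controlled by $\div\bf\in\L^2(\O)$. The $\H^s$-bound on $\bu$ itself is then immediate from Lemma~\ref{lmm:G} applied to $\bu\in\bP(\mV)$ together with boundedness of $\bT$, giving $\|\bu\|_{s,\O}\lesssim\|\div\bu\|_{0,\O}\lesssim\|\bf\|_{\div,\O}$. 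For the divergence I would bootstrap through the elliptic regularity of this Neumann problem: a first step yields $g\in\H^1(\O)$, and feeding back the $\H^s$-regularity of the gradient data supplied by Lemma~\ref{lmm:G} upgrades this to $g\in\H^{1+s}(\O)$, whence $\div\bu\in\H^{1+s}(\O)$ with $\|\div\bu\|_{1+s,\O}\lesssim\|\bf\|_{\div,\O}$. In particular $\bT(\bP(\mV))\subset\{\bv\in\H^s(\O)^\texttt{d}:\div\bv\in\H^1(\O)\}$.

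Finally, for compactness I would view $\bT|_{\bP(\mV)}$ as mapping $\bP(\mV)$ boundedly into the regular space $\{\bv\in\H^s(\O)^\texttt{d}:\div\bv\in\H^1(\O)\}$ endowed with the norm $\|\bv\|_{s,\O}+\|\div\bv\|_{1+s,\O}$. Since $\H^s(\O)^\texttt{d}\hookrightarrow\L^2(\O)^\texttt{d}$ and $\H^1(\O)\hookrightarrow\L^2(\O)$ are compact (Rellich--Kondrachov, $s>1/2>0$), this space embeds compactly into $\H(\div,\O)$, and therefore into $\bP(\mV)$; writing $\bT|_{\bP(\mV)}$ as the composition of the bounded solution map with this compact embedding gives the claim. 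The delicate point is the bootstrap step, namely securing the fractional exponent $s\in(1/2,1]$ and the $\H^{1+s}$-bound for $\div\bu$ on a merely Lipschitz domain, where corner and edge singularities limit the attainable regularity; this is exactly where the elliptic shift theorem for the scalar Neumann problem and Lemma~\ref{lmm:G} must be combined with care.
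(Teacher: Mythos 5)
The paper never actually writes out a proof of this lemma: it is recalled from the constant-density references \cite{MR3660301, MR1342293}, so your proposal has to be judged against the standard argument in that literature. Your proof is exactly that argument --- $\bT$ fixes $\mK$ pointwise, invariance of the complement by testing with $\boldsymbol{\varphi}\in\mK$, the strong form $\rho\bu-\nabla(\rho c^2\div\bu)=\rho\bf$ yielding a scalar Neumann problem for $g:=\rho c^2\div\bu$, the bootstrap ($g\in\H^1(\O)$ first, then $\nabla g=\rho(\bu-\bf)\in\H^s(\O)^\texttt{d}$ hence $g\in\H^{1+s}(\O)$ via Lemma~\ref{lmm:G}), and compactness by Rellich--Kondrachov composed with the bounded solution map. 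For constant $\rho$ this is complete and correct; in particular your derivation of $\nabla g\cdot\bn=0$ from $\nabla g=\rho(\bu-\bf)$ and the vanishing normal traces is exactly right.

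However, in the generality the paper actually claims (variable $\rho(x)$, assumed only bounded above and below), the invariance step has a genuine gap. Lemma~\ref{lmm:G} makes $\mV=\mK\oplus\bP(\mV)$ orthogonal with respect to the \emph{unweighted} $\L^2(\O)^\texttt{d}$ and $\H(\div,\O)$ inner products, whereas both of your orthogonality claims concern the $\rho$-weighted form $b$. Concretely, for $\bf=\nabla q\in\bP(\mV)$ and $\boldsymbol{\varphi}\in\mK$, integration by parts gives $b(\bf,\boldsymbol{\varphi})=\int_\O\rho\,\nabla q\cdot\boldsymbol{\varphi}=-\int_\O q\,\nabla\rho\cdot\boldsymbol{\varphi}$, which has no reason to vanish unless $\rho$ is constant; so the sentence ``the right-hand side vanishes since $\bf$ is orthogonal to $\mK$'' is the step that fails. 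Conversely, $b(\bu,\boldsymbol{\varphi})=0$ for all $\boldsymbol{\varphi}\in\mK$ only says that $\rho\bu$ is $\L^2$-orthogonal to $\mK$, i.e.\ $\bu\in\{\rho^{-1}\nabla q\,:\,q\in\H^1(\O)\}$, which is not $\bP(\mV)=\{\nabla q\,:\,q\in\H^1(\O)\}$ for non-constant $\rho$. The abstract principle you invoke (the orthogonal complement of an invariant subspace of a selfadjoint operator is invariant) is valid only for complements taken with respect to an inner product in which $\bT$ is selfadjoint, namely $a$ or $b$, and for variable $\rho$ that complement is the weighted one, not $\bP(\mV)$. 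The defect propagates downstream: the $\H^s$ bound for $\bu$ uses $\bu\in\bP(\mV)$, and the bootstrap step $\rho(\bu-\bf)\in\H^s(\O)^\texttt{d}$ additionally needs $\rho$ to be at least Lipschitz rather than merely $\L^\infty$. In fairness, this mismatch is inherited from the paper itself, which transplants constant-density results verbatim into a variable-density setting; but a proof valid in the paper's stated setting would have to either redefine $\bP(\mV)$ as $\{\rho^{-1}\nabla q\}$ (and re-derive Lemma~\ref{lmm:G} accordingly) or restrict to constant $\rho$, and yours, as written, does neither.
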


Finally, we present the following spectral characterization of $\bT$.
\begin{theorem}
The spectrum of $\bT$ decomposes as $\sp(\bT)=\{0,1\}\cup\{\mu_k\}_{k\in\mathbb{N}}$ where
\begin{enumerate}
\item $\mu=1$ is an infinite-multiplicity eigenvalue of $\bT$ and its associated 
eigenspace is $\mK$,
\item $\{\mu_k\}_{k\in\mathbb{N}}\subset (0,1)$ is a sequence of finite-multiplicity eigenvalues if $\bT$ which converge to zero and if $\bu$ is an eigenfunction of $\bT$ associated with such an eigenvalue, then there exists $\widehat{s}>1/2$, depending on the eigenvalue, such that
$$
\|\bu\|_{\widehat{s},\O}+\|\div\bu\|_{1+\widehat{s},\O}\lesssim\|\bu\|_{\div,\O},
$$
where the hidden constant also depends on the eigenvalue.
\item $\mu=0$ is not an eigenvalue of $\bT$.
\end{enumerate}
\end{theorem}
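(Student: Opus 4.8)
The plan is to exploit the two structural facts already at our disposal: the operator $\bT$ is selfadjoint with respect to $a(\cdot,\cdot)$ (equivalently $b(\cdot,\cdot)$), and the space splits as the $\bT$-invariant orthogonal direct sum $\mV=\mK\oplus\bP(\mV)$ furnished by Lemmas~\ref{lmm:G} and~\ref{lmm:invariant}. Consequently $\bT$ is block diagonal, $\bT=\bT|_{\mK}\oplus\bT|_{\bP(\mV)}$, its spectrum is the union of the spectra of the two blocks, and every eigenfunction associated with an eigenvalue $\mu\neq1$ must lie in $\bP(\mV)$ (matching components in the direct sum forces the $\mK$-part to vanish). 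The three items of the theorem then correspond, respectively, to the action of $\bT$ on $\mK$, to the compact block $\bT|_{\bP(\mV)}$, and to the injectivity of $\bT$.

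First I would compute $\bT$ on $\mK$. For $\bf\in\mK$ one has $\div\bf=0$, so $a(\bf,\bv)=\int_\O\rho\bf\cdot\bv=b(\bf,\bv)$ for every $\bv\in\mV$; comparing with the defining relation $a(\bT\bf,\bv)=b(\bf,\bv)$ this shows $\bT\bf=\bf$, that is $\bT|_{\mK}=\mathrm{Id}$, so $\mu=1$ is an eigenvalue whose eigenspace contains $\mK$. Conversely, if $\bT\bu=\bu$ then $a(\bu,\bv)=b(\bu,\bv)$ for all $\bv$, which forces $\int_\O\rho c^2\div\bu\,\div\bv=0$ and hence $\div\bu=0$, so the eigenspace is exactly $\mK$. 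Since $\mK$ is infinite dimensional, $\mu=1$ has infinite multiplicity, giving item (1).

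Next I would treat the block $\bT|_{\bP(\mV)}:\bP(\mV)\to\bP(\mV)$, which by Lemma~\ref{lmm:invariant} is compact and remains selfadjoint. The spectral theorem for compact selfadjoint operators then yields a sequence $\{\mu_k\}$ of real, finite-multiplicity eigenvalues whose only possible accumulation point is $0$. To locate them in $(0,1)$ I would use the Rayleigh quotient: for an eigenpair $(\mu,\bu)$ with $\bu\in\bP(\mV)$, the defining relation gives $\mu\,a(\bu,\bu)=a(\bT\bu,\bu)=b(\bu,\bu)>0$, so $\mu>0$; and since $a(\bu,\bu)=b(\bu,\bu)+\int_\O\rho c^2(\div\bu)^2$ with $\div\bu\neq\boldsymbol{0}$ (otherwise $\bu\in\mK\cap\bP(\mV)=\{\boldsymbol{0}\}$), the quotient $\mu=b(\bu,\bu)/a(\bu,\bu)$ is strictly below $1$. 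The regularity estimate of item (2) then follows by writing $\bu=\mu_k^{-1}\bT\bu$ and invoking the smoothing estimate of Lemma~\ref{lmm:invariant}, the hidden constant absorbing the factor $\mu_k^{-1}$, which is precisely why it depends on the eigenvalue.

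Finally, for item (3): if $\bT\bu=\boldsymbol{0}$ then $0=a(\bT\bu,\bu)=b(\bu,\bu)=\int_\O\rho|\bu|^2$, whence $\bu=\boldsymbol{0}$, so $0$ is not an eigenvalue; nevertheless $0\in\sp(\bT)$ because $\bT|_{\bP(\mV)}$ is compact on the infinite-dimensional space $\bP(\mV)$. I expect the main obstacle to be the bookkeeping that separates the essential from the discrete spectrum, namely making precise that the block decomposition produces exactly $\spe(\bT)=\{0,1\}$ and $\spd(\bT)=\{\mu_k\}$ and nothing more, with $1$ an infinite-multiplicity eigenvalue and $0$ a non-eigenvalue accumulation point, rather than any single estimate, since the analytic content is delivered wholesale by the compactness and regularity of Lemma~\ref{lmm:invariant}.
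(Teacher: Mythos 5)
Your proof is correct and takes essentially the route the paper relies on: the paper states this spectral characterization without proof, importing it from its references \cite{MR3660301,MR1342293}, and those sources argue exactly as you do --- block-diagonalize $\bT$ over the invariant decomposition $\mV=\mK\oplus\bP(\mV)$ of Lemma~\ref{lmm:G}, observe $\bT|_{\mK}=\mathrm{Id}$, and apply the spectral theorem for compact selfadjoint operators to $\bT|_{\bP(\mV)}$ (Lemma~\ref{lmm:invariant}) together with your Rayleigh-quotient bounds to place the $\mu_k$ in $(0,1)$. The only nuance worth flagging is item (2): your one-line argument $\bu=\mu_k^{-1}\bT\bu$ yields the estimate with the \emph{fixed} exponent $s$ of Lemma~\ref{lmm:G} and an eigenvalue-dependent constant, which indeed suffices for the statement as written, whereas the eigenvalue-dependent exponent $\widehat{s}$ in the theorem reflects the bootstrap alluded to in Lemma~\ref{lmm:invariant}, which can yield regularity beyond $s$ for individual eigenfunctions.
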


With this result at hand, we are in a position to introduce the numerical method of our interest.

\section{The DG method}
\label{sec:DG}
 To begin, we need to set some definitions and preliminaries.
For $\Omega\subset\mathbb{R}^\texttt{d}$, for $\texttt{d}\in\{2,3\}$,  let $\mathcal{T}_h$ be a shape regular family of meshes which subdivide the domain $\bar \Omega$ into  
triangles/tetrahedra $K$.  We denote by  $h_K$ 
the diameter of the element $K\in\cT_h$ and $h$ the maximum of the diameters of all the
elements of the mesh, i.e. $h:= \max_{K\in \cT_h} \{h_K\}$. Let $F$ be a closed set such  that  $F\subset \overline{\Omega}$
is an interior edge/face if $F$ has a positive $(n-1)$-dimensional 
measure and if there are distinct elements $K$ and $K'$
such that $F =\bar K\cap \bar K'$. A closed 
subset $F\subset \overline{\Omega}$ is a boundary edge/face if
there exists $K\in \cT_h$ such that $F$ is an edge/face
of $K$ and $F =  \bar K\cap \partial \Omega$. 
Let $\cF_h^*$ and $\cF_h^\partial$ be  the sets  of interior edges/faces
and  boundary edges/face, respectively.

For any $t\geq 0$, we define the following broken Sobolev space 
\[
 \H^t(\cT_h)^\texttt{d}:=
 \set{\bv \in \L^2(\O)^\texttt{d}: \quad \bv|_K\in \H^t(K)^\texttt{d}\quad \forall K\in \cT_h}.
\]
%
Also, the space of the skeletons of the triangulations
$\cT_h$ is defined  by $ \L^2(\cF_h):= \prod_{F\in \mathcal{F}_h} \L^2(F),$ where $\cF_h:=\cF_h^*\cup\cF_h^{\partial}$.

In the forthcoming analysis, $h_\cF\in \L^2(\cF_h)$
will represent the piecewise constant function 
defined by $h_\cF|_F := h_F$ for all $F \in \cF_h$,
where $h_F$ denotes the diameter of edge/face $F$.

Let  $\cP_m(\cT_h)$ be  the space of piecewise polynomials respect with to
 $\cT_h$  of degree at most $m\geq 0$;
namely,  
\[
 \cP_m(\cT_h) :=\left\{ v\in \L^2(\O):\, v|_K \in \cP_m(K)\,\,\, \,\,\forall K\in \cT_h \right\}. 
\]

For any $k\geq 1$, we define the finite element spaces
$\mathcal{V}_h := \cP_k(\cT_h)^{\texttt{d}}$ and $\mathcal{V}_h^c := \mathcal{V}_h \cap \mathcal{V}$.
We observe that the space $\mathcal{V}_h^c$ is the Brezzi-Douglas-Marini (BDM)
finite element space. Now, we recall some well-known properties
of the space $\mathcal{V}_h^c$.

We define averages $\mean{\bv}\in \L^2(\cF_h)^{\texttt{d}}$
and jumps $\jump{\bv}\in \L^2(\cF_h)$ 
as follows
\[
 \mean{\bv}_F := (\bv_K + \bv_{K'})/2 \quad \text{and} \quad \jump{\bv}_F := \bv_K \cdot\n_K + \bv_{K'}\cdot\n_{K'} 
 \quad \forall F \in \cF(K)\cap \cF(K'),
\]
where $\n_K$ is the outward unit normal vector to $\partial K$.
Also, on the boundary  $\partial\O$ and for all  $F \in \cF(K)\cap \DO$,
the averages and jumps are defined by $ \mean{\bv}_F := \bv_K $
and  $\jump{\bv}_F := \bv_K \cdot\boldsymbol{n} $, respectively.

Before concluding this section, we shall note some results required for the analysis that follows. For a vector field $\bv \in \mathcal{V}_h$ we define $\div_h \bv \in \L^2(\O)$ by 
$
\div_h \bv|_{K} := \div (\bv|_K)$ for all $K\in \cT_h
$ and  we endow $\mathcal{V}(h) := \mathcal{V} + \mathcal{V}_h$ with the following seminorm
\[
 |\bv|^2_{\mathcal{V}(h)} := \|\div_h \bv\|^2_{0,\O} + \|h_{\cF}^{-1/2} \jump{\bv}\|^2_{0,\cF^*_h},
\]
which is well defined in $\mathcal{V}(h)$ and the norm
$
 \|\bv\|^2_{\textrm{DG}} := |\bv|^2_{\mathcal{V}(h)} + \|\bv\|^2_{0,\O}.
$
Another important ingredient for the analysis is the following discrete trace inequality \cite{MR2882148}:
\begin{equation}\label{discTrace}
\|h^{1/2}\mean{v}\|_{0,\cF}\lesssim \|v\|_{0,\O}\qquad\forall v\in\cP_k(\mathcal{T}_h).
\end{equation}
\subsection{The DG acoustic problem}
Now we  introduce the DG discretization for the acoustic problem \eqref{eq:acustic1}, which reads as follows: Find $\lambda_h:=1+\widehat{\lambda}_h\in\mathbb{C}$
and $\boldsymbol{0}\neq\bu_h\in\mathcal{V}_h$ such that
\begin{equation}\label{spectdisc}
a_h(\bu_h,\btau_h)=\lambda_h b(\bu_h,\btau_h)\quad\forall\btau_h\in\mathcal{V}_h,
\end{equation}
where the bilinear form $a_h:\mathcal{V}_h\times\mathcal{V}_h\rightarrow\mathbb{C}$ is defined by
\begin{multline}
\label{ah}
a_h(\bu_h,\bv_h):=\int_{\O}\rho(x) c^2\div_h\bu_h\div_h\bv_h+\int_{\O}\rho(x)\bu_h\cdot\bv_h\\
+ \int_{\cF^*_h} \frac{\texttt{a}_S}{ h_{\cF}}\, \jump{\bu_h}\cdot \jump{\bv_h}-\int_{\cF^*_h} 
\mean{\rho(x) c^2\div_h\bu_h} \cdot \jump{\bv_h} 
-\varepsilon\int_{\cF^*_h} \mean{\rho(x) c^2\div_h\bv_h} \cdot \jump{\bu_h},
\end{multline}
where $\texttt{a}_S>0$ is the so called stabilization parameter
and $\varepsilon\in\{-1,0,1\}$. These parameters are relevant for our DG methods. In one hand, the stabilization 
$\texttt{a}_S$ is a parameter that, if is not correctly chosen, could yield to instabilities for the method. In general, this parameter  depends on theoretical and physical constants and, in the numerical performance, also on the polynomial degrees. (cf. Section \ref{sec:numerics}). On the other hand, $\varepsilon$ establishes if the DG method is symmetric or nonsymmetric. More precisely,   if $\varepsilon=1$ we obtain
the classic symmetric interior penalty method (SIP), 
if $\varepsilon=-1$ we obtain the non-symmetric interior penalty method (NIP)
and if $\varepsilon=0$ the incomplete interior penalty method (IIP) (see \cite{MR2220929, MR4077220, MR3962898} for instance).

For the analysis, we introduce the following norm
$$
\|\bu\|^{*}_{\textrm{DG}}:=\big(\|\bu\|^2_{\textrm{DG}}
+\| h_{\cF_h}^{1/2}\mean{\div_h\bu}\|^2_{0,\cF_h^{*}}\big)^{1/2}.
$$
It is easy to check that  for all $\bu,\bv\in\mV(h)$,
and $\div\bu,\div\bv\in\H^t(\O)$
with $t>1/2$, the bilinear form $a_h(\cdot,\cdot)$
is bounded. In fact, there exists a constant $C>0$,
independent of $h$, such that $\big|a_h(\bu,\bv)\big|\leq C\|\bu\|_{\textrm{DG}}^{*}\|\bv\|_{\textrm{DG}}^{*}$.
Moreover, by means of \eqref{discTrace} it is possible to prove
that for all $\bv_h\in\mathcal{V}_h$ there exists a positive constant $M_{\textrm{DG}}$ such that $\vert a_h (\bu,\bv_h)\vert\leq M_{\textrm{DG}}\|\bu\|_{\textrm{DG}}^*\|\bv_h\|_{\textrm{DG}}$.

In order to analyze the discrete eigenvalue problem \eqref{spectdisc},
we need to decompose the space $\mV^c_h$.
With this aim, we consider the following subspace of $\mathcal{K}$
\[
\mathcal{K}_h : = \left\{\bv_h \in \mV^c_h:\,\, \div \bv_h    = 0  \right\}.
\]
With $\mK_h$ at hand, we are interested in its orthogonal complement at the discrete level. With this in mind, is necessary to introduce the discrete counterpart of the operator $\bP$, which we denote by $\bP_h$ and is defined by $\bP_h:\mV_h^c\rightarrow\mV_h^c$. With this operator at hand, the following approximation result holds, an its proof is identical to \cite[Lemma 3.1]{MR4077220}.
\begin{lemma}
The operator $\bP_h:\mV_h^c\rightarrow\mV_h^c$ is a projection with kernel $\mK_h$ and satisfies the following estimate
$$
\|(\bP-\bP_h)\bv_h\|_{\div,\O}\lesssim h^s\|\div\bv_h\|_{0,\Omega}\quad\forall\bv_h\in\mV_h^c,
$$
where the hidden constant is independent of $h$ and $s$ is as in Lemma \ref{lmm:G}.
\end{lemma}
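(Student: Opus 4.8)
The plan is to define $\bP_h$ as the orthogonal projection of $\mV_h^c$ onto the $\mV$-orthogonal complement $\mK_h^{\perp}$ of $\mK_h$, where the inner product of $\mV$ is the one making the splitting of Lemma~\ref{lmm:G} orthogonal (recall that $\mV=\mK\oplus\bP(\mV)$ is orthogonal in both $\L^2(\O)^{\texttt{d}}$ and $\H(\div,\O)$, so that $\bP$ is exactly this orthogonal projection at the continuous level). With this definition the first assertion is immediate: as an orthogonal projection $\bP_h^2=\bP_h$, its range is $\mK_h^{\perp}$, and its kernel is exactly $\mK_h$. The work is therefore entirely in the error estimate.

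For the estimate, fix $\bv_h\in\mV_h^c$ and record the two splittings
\begin{equation*}
\bv_h=\boldsymbol{\varphi}+\nabla q,\qquad \bv_h=\boldsymbol{\varphi}_h+\bP_h\bv_h,
\end{equation*}
with $\boldsymbol{\varphi}:=(\bId-\bP)\bv_h\in\mK$, $\nabla q:=\bP\bv_h\in\bP(\mV)$, and $\boldsymbol{\varphi}_h:=(\bId-\bP_h)\bv_h\in\mK_h$. Since $\mK_h\subset\mK$ consists of divergence-free fields, $\div\bP_h\bv_h=\div\bv_h=\div\bP\bv_h$, hence $\div\big((\bP-\bP_h)\bv_h\big)=0$; consequently the $\H(\div,\O)$-norm of the error collapses to its $\L^2$-norm, and moreover $(\bP-\bP_h)\bv_h=\boldsymbol{\varphi}_h-\boldsymbol{\varphi}$. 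The key observation I would then prove is that $\boldsymbol{\varphi}_h$ is the $\mV$-orthogonal projection of $\boldsymbol{\varphi}$ onto $\mK_h$: indeed $\bP_h\bv_h\in\mK_h^{\perp}$ gives $(\bv_h-\boldsymbol{\varphi}_h,\bw_h)_{\mV}=0$ for all $\bw_h\in\mK_h$, while $\nabla q=\bv_h-\boldsymbol{\varphi}\in\bP(\mV)=\mK^{\perp}$ together with $\mK_h\subset\mK$ gives $(\bv_h-\boldsymbol{\varphi},\bw_h)_{\mV}=0$; subtracting yields $(\boldsymbol{\varphi}-\boldsymbol{\varphi}_h,\bw_h)_{\mV}=0$ for all $\bw_h\in\mK_h$. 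Because $\boldsymbol{\varphi}$ and every competitor are divergence free, this makes $\boldsymbol{\varphi}_h$ the $\H(\div,\O)$-best approximation, so $\|\boldsymbol{\varphi}-\boldsymbol{\varphi}_h\|_{\div,\O}=\min_{\bw_h\in\mK_h}\|\boldsymbol{\varphi}-\bw_h\|_{\div,\O}$, and it remains to exhibit one good competitor.

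Here I would use the BDM interpolation operator $\bPi_h$, which enjoys the commuting property $\div\bPi_h\bw=\cP_h\div\bw$, with $\cP_h$ the $\L^2$-projection onto $\cP_{k-1}(\cT_h)=\div\mV_h^c$. Set $\bw_h:=\bv_h-\bPi_h\nabla q$. Since $\bv_h\in\cP_k(\cT_h)^{\texttt{d}}$ we have $\div\bv_h\in\cP_{k-1}(\cT_h)$, so $\cP_h\div\bv_h=\div\bv_h$; combined with $\div\nabla q=\div\bv_h$ this gives $\div\bPi_h\nabla q=\div\bv_h$, whence $\div\bw_h=0$, i.e.\ $\bw_h\in\mK_h$. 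For this choice $\boldsymbol{\varphi}-\bw_h=\bPi_h\nabla q-\nabla q$ and its divergence vanishes by the same commuting identity, so by the standard BDM interpolation error estimate and the regularity $\nabla q\in\H^{s}(\O)^{\texttt{d}}$ with $\|\nabla q\|_{s,\O}\lesssim\|\div\bv_h\|_{0,\O}$ from Lemma~\ref{lmm:G},
\begin{equation*}
\|\boldsymbol{\varphi}-\boldsymbol{\varphi}_h\|_{\div,\O}\le\|\nabla q-\bPi_h\nabla q\|_{\div,\O}=\|\nabla q-\bPi_h\nabla q\|_{0,\O}\lesssim h^{s}\|\nabla q\|_{s,\O}\lesssim h^{s}\|\div\bv_h\|_{0,\O}.
\end{equation*}
Recalling $(\bP-\bP_h)\bv_h=\boldsymbol{\varphi}_h-\boldsymbol{\varphi}$ yields the claim.

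The step I expect to be most delicate is the last one: Lemma~\ref{lmm:G} only guarantees $s\in(1/2,1]$, so $\nabla q$ sits barely above $\H^{1/2}$, the borderline case where the classical BDM nodal interpolant need not be bounded. I would therefore either invoke the refined Raviart--Thomas/BDM interpolation estimates valid on $\H^{s}(\O)^{\texttt{d}}\cap\H(\div,\O)$ for $s>1/2$, or replace $\bPi_h$ by a Schöberl/Christiansen--Winther-type quasi-interpolant that is stable at this low regularity while retaining the commuting diagram; verifying that such an operator still forces $\div\bw_h=0$ is the technical crux of the argument.
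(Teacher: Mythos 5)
Your proposal is correct and takes essentially the same approach as the paper's proof, which is not spelled out in the text but deferred to \cite[Lemma 3.1]{MR4077220}: identifying $(\bP-\bP_h)\bv_h$ with the best-approximation error in $\mK_h$ of the divergence-free component of $\bv_h$, and bounding it via the commuting BDM interpolant applied to $\nabla q=\bP\bv_h$ together with the regularity $\|\nabla q\|_{s,\O}\lesssim\|\div\bv_h\|_{0,\O}$ from Lemma~\ref{lmm:G}, is exactly that argument. Your closing concern about borderline regularity is unnecessary: Lemma~\ref{lmm:G} guarantees $s>1/2$ strictly, so the classical BDM interpolation estimate applies (with a hidden constant depending on the fixed $s$), and one only needs to note additionally that the interpolant preserves the vanishing normal trace on $\partial\O$ so that the competitor $\bv_h-\bPi_h\nabla q$ indeed lies in $\mK_h$.
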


The following technical result proved in \cite[Proposition~5.2]{MR3376135} for tensorial fields, and that is easily adapted for vectorial fields, 
will be useful in the forthcoming analysis.
\begin{proposition}\label{propC}
There exist a projection $\mathcal{I}_h:\, \mV_h \to \mV_h^c$
and two constants $\underbar{C},\bar{C} >0$,
independent of $h$, such that 
\begin{equation}\label{equivN}
\underbar{C}\,\,  \|\bv_h\|_{\mathrm{DG}}  \leq \Big( \|\mathcal{I}_h \bv_h\|^2_{\div,\O} + 
 \|h_{\cF}^{-1/2} \jump{\bv_h}\|^2_{0,\cF^*_h}
 \Big)^{1/2} \leq \bar{C}\,\, \|\bv_h\|_{\mathrm{DG}} \quad\forall\bv_h\in\mV_h.
\end{equation}
Moreover, we have that 
\begin{equation}\label{L2Ph}
 \|\div_h (\bv_h- \mathcal{I}_h \bv_h)\|^2_{0,\O}
 + \sum_{K\in \cT_h} h_K^{-2} \|\bv_h- \mathcal{I}_h \bv_h\|^2_{0,K}
 \leq C\,\, \|h_{\cF}^{-1/2} \jump{\bv_h}\|^2_{0,\cF^*_h},
\end{equation}
 with $C>0$ independent of $h$.
\end{proposition}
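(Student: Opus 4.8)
The plan is to construct $\mathcal{I}_h$ as an Oswald/Karakashian--Pascal-type averaging operator and to reduce both assertions of the proposition to the single local estimate \eqref{L2Ph}, which carries the genuine content; the norm equivalence \eqref{equivN} will then follow by elementary triangle-inequality bookkeeping.

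\textbf{Construction of $\mathcal{I}_h$.} Since $\mV_h^c=\mV_h\cap\mV$ is the BDM space, its global degrees of freedom are the normal-trace moments on faces together with the interior element moments. Given $\bv_h\in\mV_h$, I would define $\mathcal{I}_h\bv_h\in\mV_h^c$ by prescribing its degrees of freedom: at every interior face $F\in\cF_h^*$ shared by $K$ and $K'$, assign to each face moment the average of the two values computed from $\bv_h|_K$ and $\bv_h|_{K'}$; at boundary faces prescribe a vanishing normal trace so that the image lies in $\mV$; and leave the interior moments unchanged. By construction $\mathcal{I}_h\bv_h$ has single-valued normal traces, hence belongs to $\mV_h^c$, and $\mathcal{I}_h$ is a projection since it fixes every conforming field.

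\textbf{The local estimate.} On a fixed element $K$ the difference $\bv_h-\mathcal{I}_h\bv_h$ is a polynomial whose degrees of freedom all vanish except the face moments that were modified, and each such modification is one half the discrepancy of the normal traces across the corresponding face, i.e.\ a multiple of $\jump{\bv_h}|_F$. I would map to the reference element (using the contravariant Piola transform that preserves normal components), invoke the equivalence of all norms on the finite-dimensional space $\cP_k(\widehat K)^{\texttt{d}}$, and scale back, so that for each $K$
$$
h_K^{-2}\|\bv_h-\mathcal{I}_h\bv_h\|_{0,K}^2+\|\div_h(\bv_h-\mathcal{I}_h\bv_h)\|_{0,K}^2\lesssim \sum_{F\subset\partial K\cap\cF_h^*} h_F^{-1}\|\jump{\bv_h}\|_{0,F}^2 ,
$$
where shape regularity ($h_F\simeq h_K$) fixes the powers of $h$ and a standard inverse inequality absorbs the divergence term into $h_K^{-2}\|\cdot\|_{0,K}^2$. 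Summing over $K\in\cT_h$, and using that each interior face is shared by exactly two elements, yields \eqref{L2Ph}.

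\textbf{From \eqref{L2Ph} to \eqref{equivN}, and the main obstacle.} For the upper bound I would write $\mathcal{I}_h\bv_h=\bv_h-(\bv_h-\mathcal{I}_h\bv_h)$, giving $\|\mathcal{I}_h\bv_h\|_{\div,\O}\le \|\bv_h\|_{0,\O}+\|\div_h\bv_h\|_{0,\O}+\|\bv_h-\mathcal{I}_h\bv_h\|_{0,\O}+\|\div_h(\bv_h-\mathcal{I}_h\bv_h)\|_{0,\O}$; the last two terms are controlled by the jump seminorm via \eqref{L2Ph} (the weight $h_K^2$ from $h_K^{-2}\|\cdot\|^2$ being absorbed since $h_K\lesssim 1$ on the bounded domain $\O$), whence $\|\mathcal{I}_h\bv_h\|_{\div,\O}\lesssim\|\bv_h\|_{\textrm{DG}}$, and adding the common jump term gives the right inequality. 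The lower bound is symmetric: $\|\bv_h\|_{0,\O}$ and $\|\div_h\bv_h\|_{0,\O}$ are each bounded by $\|\mathcal{I}_h\bv_h\|_{\div,\O}$ plus the same jump contribution, while the jump term appears identically on both sides. The only delicate point, and the one I expect to demand care, is the local estimate: one must verify that the modified moments are genuinely proportional to $\jump{\bv_h}|_F$ and track the correct powers of $h$ through the Piola transform so that the $h_F^{-1}$ weight matches $\|h_\cF^{-1/2}\jump{\bv_h}\|_{0,\cF_h^*}$. Everything else is triangle-inequality bookkeeping. Since this is exactly the vectorial analogue of \cite[Proposition~5.2]{MR3376135}, I would transcribe that scaling argument componentwise.
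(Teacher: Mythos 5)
Your construction and the interior-face analysis (BDM degree-of-freedom averaging, Piola scaling, norm equivalence on the reference element) are sound, and they are indeed the standard route behind the tensorial result that the paper cites in lieu of a proof. The genuine gap is the boundary. To land in $\mV_h^c=\mV_h\cap\mV$ you zero out the normal-trace moments on $\cF_h^\partial$, but that modification is of the size of $\bv_h\cdot\n$ on the boundary faces, a quantity that never appears on the right-hand side of \eqref{L2Ph}, which involves interior faces only; your local estimate silently records only the interior-face modifications. Concretely, take $\bv_h(\bx)=\bx\in\mV_h$ (admissible since $k\ge 1$): it is globally polynomial, so $\jump{\bv_h}=0$ on every $F\in\cF_h^*$, while $\bv_h\cdot\n$ cannot vanish identically on $\partial\O$ because $\int_{\partial\O}\bx\cdot\n = \texttt{d}\,|\O|>0$. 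Your operator therefore alters $\bv_h$ on boundary elements, so the left-hand side of \eqref{L2Ph} is strictly positive while the right-hand side vanishes. The failure is not marginal: on a boundary element $K$ the correction $\boldsymbol{\delta}$ satisfies $\int_K \div\boldsymbol{\delta}=-\int_{F}\bx\cdot\n\sim h_K^{\texttt{d}-1}$, whence $\|\div\boldsymbol{\delta}\|_{0,K}\gtrsim h_K^{\texttt{d}/2-1}$, and summing over the $\sim h^{-(\texttt{d}-1)}$ boundary elements gives $\|\div_h(\bv_h-\mathcal{I}_h\bv_h)\|_{0,\O}^2\gtrsim h^{-1}$, so the upper bound in \eqref{equivN} fails as well.

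Note what this example really shows: \emph{no} projection onto $\mV_h^c$, with the essential condition $\bv\cdot\n=0$ encoded in $\mV$, can satisfy \eqref{L2Ph} with only interior jumps on the right, since \eqref{L2Ph} would force $\mathcal{I}_h\bx=\bx\notin\mV_h^c$. The tensorial proposition that you (and the paper) invoke is stated for an $\H(\div)$-conforming space carrying no essential boundary condition, so there the averaging only ever touches interior moments. The repair is therefore not more careful bookkeeping but a change of statement: either (i) take the target space to be $\mV_h\cap\H(\div,\O)$, keep your averaging on interior faces, and leave the boundary moments of $\bv_h$ untouched, in which case your Piola-scaling argument closes both estimates essentially verbatim; or (ii) keep $\mV_h^c$ but enlarge the jump seminorm (and hence the DG norm and the scheme) by the boundary contribution $\|h_\cF^{-1/2}\,\bv_h\cdot\n\|_{0,\cF_h^\partial}^2$, i.e.\ impose $\bu\cdot\n=0$ weakly. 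As written, the statement cannot be established, and your proof breaks exactly at the point where the paper's ``easy adaptation'' of the cited result is not actually harmless.
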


Now, with this result at hand, we will prove that $a_h(\cdot,\cdot)$ is coercive in $\mV_h$, independent of the DG method that we are considering. 
\begin{lemma}\label{discElip}
For any $\varepsilon\in\{-1,0,1\}$, there exists a parameter $\textup{\texttt{a}}^*>0$
such that for all $\textup{\texttt{a}}_S\geq\textup{\texttt{a}}^*$  there holds
$$
a_h(\bv_h,\bv_h)\geq\alpha_{\mathrm{DG}}\|\bv_h\|_{\mathrm{DG}}^2\qquad\forall\bv_h\in\mV_h,
$$
where  $\alpha_{\mathrm{DG}}>0$ is independent of $h$.
\end{lemma}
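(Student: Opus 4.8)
The plan is to test the bilinear form on the diagonal and reduce everything to the three squared quantities that make up $\norm{\cdot}_{\mathrm{DG}}$. Setting $\bu_h=\bv_h$ in \eqref{ah}, the two consistency terms combine into one, so
$$a_h(\bv_h,\bv_h)=\int_{\O}\rho(x)c^2\abs{\div_h\bv_h}^2+\int_{\O}\rho(x)\abs{\bv_h}^2+\int_{\cF_h^*}\frac{\texttt{a}_S}{h_\cF}\abs{\jump{\bv_h}}^2-(1+\varepsilon)\int_{\cF_h^*}\mean{\rho(x)c^2\div_h\bv_h}\cdot\jump{\bv_h}.$$
I would first record that for the nonsymmetric choice $\varepsilon=-1$ the last term vanishes, so coercivity holds for every $\texttt{a}_S>0$ directly from $\rho\ge\underline{\rho}$; the work is therefore concentrated on the cases $1+\varepsilon\in\{1,2\}$, where the factor $1+\varepsilon$ is strictly positive.

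The core of the argument is to control the cross term $T:=\int_{\cF_h^*}\mean{\rho(x)c^2\div_h\bv_h}\cdot\jump{\bv_h}$. I would split the weight by its powers of $h_\cF$ and apply a face-wise Cauchy--Schwarz inequality,
$$\abs{T}\le\norm{h_\cF^{1/2}\mean{\rho(x)c^2\div_h\bv_h}}_{0,\cF_h^*}\,\norm{h_\cF^{-1/2}\jump{\bv_h}}_{0,\cF_h^*}.$$
Since $\div_h\bv_h\in\cP_{k-1}(\cT_h)\subset\cP_k(\cT_h)$ is a polynomial on each element and $\rho(x)\le\overline{\rho}$, the first factor is bounded by $C_{\mathrm{tr}}\norm{\div_h\bv_h}_{0,\O}$ using the discrete trace inequality \eqref{discTrace} applied element-by-element, with $C_{\mathrm{tr}}$ proportional to $\overline{\rho}c^2$ and independent of $h$. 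A Young inequality with a free parameter $\delta>0$ then gives $\abs{T}\le\tfrac{\delta}{2}\norm{\div_h\bv_h}_{0,\O}^2+\tfrac{C_{\mathrm{tr}}^2}{2\delta}\norm{h_\cF^{-1/2}\jump{\bv_h}}_{0,\cF_h^*}^2$.

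Inserting this bound and using $\rho\ge\underline{\rho}$ on the two volume terms yields
$$a_h(\bv_h,\bv_h)\ge\Bigl(\underline{\rho}c^2-(1+\varepsilon)\tfrac{\delta}{2}\Bigr)\norm{\div_h\bv_h}_{0,\O}^2+\underline{\rho}\norm{\bv_h}_{0,\O}^2+\Bigl(\texttt{a}_S-(1+\varepsilon)\tfrac{C_{\mathrm{tr}}^2}{2\delta}\Bigr)\norm{h_\cF^{-1/2}\jump{\bv_h}}_{0,\cF_h^*}^2.$$
I would fix $\delta=\underline{\rho}c^2/(1+\varepsilon)$ so that the divergence coefficient equals $\underline{\rho}c^2/2>0$, which turns the jump coefficient into $\texttt{a}_S-(1+\varepsilon)^2C_{\mathrm{tr}}^2/(2\underline{\rho}c^2)$. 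Choosing the threshold $\texttt{a}^*:=(1+\varepsilon)^2C_{\mathrm{tr}}^2/(\underline{\rho}c^2)$ guarantees that for every $\texttt{a}_S\ge\texttt{a}^*$ this coefficient is at least $\texttt{a}_S/2\ge\texttt{a}^*/2>0$. All three resulting coefficients are then strictly positive and $h$-independent, so taking $\alpha_{\mathrm{DG}}$ to be their minimum and recalling $\norm{\bv_h}_{\mathrm{DG}}^2=\norm{\div_h\bv_h}_{0,\O}^2+\norm{h_\cF^{-1/2}\jump{\bv_h}}_{0,\cF_h^*}^2+\norm{\bv_h}_{0,\O}^2$ delivers the claim.

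The point I expect to require the most care is the trace bound on $\mean{\rho(x)c^2\div_h\bv_h}$: the mean of a product is not the product of means, so I would not pass the variable density outside the average but instead bound the face value on each side of $F$ pointwise by $\overline{\rho}c^2\abs{\div_h\bv_h}$ before invoking \eqref{discTrace} on the polynomial $\div_h\bv_h$. This keeps $C_{\mathrm{tr}}$ explicit and $h$-independent, and it is precisely what makes $\texttt{a}^*$ grow with $\overline{\rho}$, $c$, and the square of the factor $1+\varepsilon$, consistent with the earlier discussion of how the stabilization parameter must be selected for the symmetric ($\varepsilon=1$) and incomplete ($\varepsilon=0$) variants.
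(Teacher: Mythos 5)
Your proof is correct and follows essentially the same route the paper takes: the paper proves Lemma \ref{discElip} by citing the standard argument of \cite[Lemma 3.2]{MR4077220} ``taking into account the presence of density and sound speed,'' which is exactly your diagonal testing, Cauchy--Schwarz on the cross term, the discrete trace inequality \eqref{discTrace}, and Young's inequality with a threshold for $\texttt{a}_S$ (the same pattern the paper writes out explicitly for the pressure form $a_h^p(\cdot,\cdot)$). Your careful pointwise handling of the variable density inside the average before invoking \eqref{discTrace} is precisely the adaptation the paper alludes to.
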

\begin{proof}
The proof follows the same arguments of those in \cite[Lemma 3.2]{MR4077220}, taking into account for our case, the presence of density and sound speed of the fluid.
\end{proof}
\begin{remark}
Notice that the coercivity constant of $a_h(\cdot,\cdot)$ depends strongly on the physical parameters, namely the density and sound speed. Hence, the stability of the method will depend on the configuration of the problem where these parameters play an important role, together with the geometry of domain, since the constants that appear on the discrete trace inequality \eqref{discTrace} and Proposition \ref{propC} depend on $\O$. This is important to have in mind, since as we will analyze in the numerical section (cf. Section \ref{sec:numerics}), the appearance of spurious modes depend on the choice of $\texttt{a}$ and hence, on $\texttt{a}^*$.
\end{remark} 

We notice that the discrete coercivity of $a_h(\cdot,\cdot)$ holds true 
for any $\varepsilon\in\{-1,0,1\}$. Implying that symmetric and nonsymmetric methods
are well posed with our DG approach. Moreover, we introduce the discrete solution operator associated
to \eqref{spectdisc}:
$$
\bT^{\varepsilon}_h:\mV\rightarrow\mV_h,\qquad
\bf\mapsto \bT^{\varepsilon}_h\bf:=\widetilde{\bu}^{\varepsilon}_h,
$$
where $\widetilde{\bu}^{\varepsilon}_h\in\mV_h$ is the unique solution
of the following discrete source problem:
$$
a_h(\widetilde{\bu}^{\varepsilon}_h,\bv_h)=b(\bf,\bv_h)\qquad\forall\bv_h\in\mV_h.
$$
Observe that the superindex $\varepsilon$ denotes if the operator is associated to a symmetric or nonsymmetric method.  This is important to have in mind since
the choice of $\varepsilon$ leads to different solutions that lie on real or complex spaces.  

Clearly  $\bT^{\varepsilon}_h$ is well defined thanks to Lax-Milgram's lemma.
Moreover, there exists a constant $C>0$ independent of $h$ such that
$
\norm{\bT^{\varepsilon}_h\bf}_{\mathrm{DG}}\leq C\norm{\bf}_{\div,\O}
$, for all $\bf\in\mV$. Also, It is easy to check that $(\lambda_h,\bu_h)\in\mathbb{C}\times\mV_h$
is a solution of problem \eqref{spectdisc} if and only if
$(\mu_h,\bu_h)\in\mathbb{C}\times\mV_h$ with $\mu_h=1/\lambda_h$
is an eigenpair of $\bT^{\varepsilon}_h$, i.e.,
$
\bT^{\varepsilon}_h\bu_h=\frac{1}{\mu_h}\bu_h.
$

\begin{proposition}
Let $\bf\in \bP(\mV)$ be such that $\bT\bf:=\widetilde{\bu}$. Then, for any $\varepsilon\in\{-1,0,1\}$ the following best approximation result holds
$$
\|(\bT-\bT^{\varepsilon}_h)\bf\|_{\mathrm{DG}}\leq\frac{M_{DG}}{\alpha_{DG}}\inf_{\bv_h\in\mV_h}\|\bT\bf-\bv_h\|_{\mathrm{DG}}^*.
$$
Moreover, for $s$ as in Lemma \ref{lmm:G}, the following estimate holds
$$
\|(\bT-\bT^{\varepsilon}_h)\bf\|_{\mathrm{DG}}\lesssim h^s(\|\widetilde{\bu}\|_{s,\O}+\|\div\widetilde{\bu}\|_{1+s,\O}),
$$
where the hidden constant is independent of $h$.
\end{proposition}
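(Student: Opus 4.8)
The plan is to follow the classical Strang--C\'ea route for a coercive (possibly nonsymmetric) discretization, the only nonroutine ingredient being the consistency of the interior-penalty form. Throughout write $\widetilde{\bu}:=\bT\bf$ and $\widetilde{\bu}^\varepsilon_h:=\bT^\varepsilon_h\bf$. The first task is to establish the Galerkin orthogonality
\[
a_h(\widetilde{\bu}-\widetilde{\bu}^\varepsilon_h,\bv_h)=0\qquad\forall\bv_h\in\mV_h .
\]
Since $\bf\in\bP(\mV)$, Lemma~\ref{lmm:invariant} yields $\widetilde{\bu}\in\bP(\mV)$ together with the extra regularity $\widetilde{\bu}\in\H^s(\O)^\texttt{d}$ and $\div\widetilde{\bu}\in\H^{1+s}(\O)$, which is precisely what makes $a_h(\widetilde{\bu},\cdot)$ meaningful on $\mV_h$ and the consistency computation legitimate. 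In particular $\widetilde{\bu}\in\mV\subset\H(\div,\O)$ forces $\jump{\widetilde{\bu}}=0$ on every interior face, so the penalty term and the $\varepsilon$-term of $a_h(\widetilde{\bu},\bv_h)$ vanish, while $\div\widetilde{\bu}\in\H^1(\O)$ is single-valued across interfaces so that $\mean{\rho(x)c^2\div_h\widetilde{\bu}}=\rho(x)c^2\div\widetilde{\bu}$. I would then integrate $\int_\O\rho(x)c^2\div\widetilde{\bu}\,\div_h\bv_h$ by parts element by element; invoking the strong form $-\nabla(\rho(x)c^2\div\widetilde{\bu})+\rho(x)\widetilde{\bu}=\rho(x)\bf$ of the source problem (and the essential condition $\widetilde{\bu}\cdot\bn=0$ to discard the contributions on $\partial\O$), the recombined interface terms cancel against $-\int_{\cF^*_h}\mean{\rho(x)c^2\div_h\widetilde{\bu}}\cdot\jump{\bv_h}$, leaving $a_h(\widetilde{\bu},\bv_h)=b(\bf,\bv_h)=a_h(\widetilde{\bu}^\varepsilon_h,\bv_h)$.

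With consistency in hand I would run the coercivity--continuity argument. Fix an arbitrary $\bv_h\in\mV_h$ and set $\bw_h:=\bv_h-\widetilde{\bu}^\varepsilon_h\in\mV_h$. Lemma~\ref{discElip}, valid for every $\varepsilon\in\{-1,0,1\}$, gives $\alpha_{\mathrm{DG}}\|\bw_h\|_{\mathrm{DG}}^2\le a_h(\bw_h,\bw_h)$, and the orthogonality above yields $a_h(\widetilde{\bu}^\varepsilon_h,\bw_h)=a_h(\widetilde{\bu},\bw_h)$, whence $a_h(\bw_h,\bw_h)=a_h(\bv_h-\widetilde{\bu},\bw_h)$. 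Applying the boundedness $\lvert a_h(\bu,\bw_h)\rvert\le M_{\mathrm{DG}}\|\bu\|^*_{\mathrm{DG}}\|\bw_h\|_{\mathrm{DG}}$ with $\bu=\bv_h-\widetilde{\bu}$ (whose star norm is finite because $\div_h\bv_h$ is piecewise polynomial, controlled through the discrete trace inequality~\eqref{discTrace}, and $\div\widetilde{\bu}\in\H^{1+s}$ has well-defined traces) produces $\|\bw_h\|_{\mathrm{DG}}\le(M_{\mathrm{DG}}/\alpha_{\mathrm{DG}})\|\widetilde{\bu}-\bv_h\|^*_{\mathrm{DG}}$. A triangle inequality, the elementary bound $\|\cdot\|_{\mathrm{DG}}\le\|\cdot\|^*_{\mathrm{DG}}$, and taking the infimum over $\bv_h\in\mV_h$ deliver the stated quasi-optimal estimate.

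Finally, to extract the rate I would bound the best-approximation error by inserting a concrete $\bv_h$, e.g.\ a broken polynomial interpolant of $\widetilde{\bu}$ of degree $k\ge1$ (composed, if needed, with $\mathcal{I}_h$ from Proposition~\ref{propC}). Using $\widetilde{\bu}\in\H^s(\O)^\texttt{d}$, $\div\widetilde{\bu}\in\H^{1+s}(\O)$ and $s>1/2$ (so that traces are meaningful and $\jump{\widetilde{\bu}}=0$), standard broken approximation and (discrete) trace estimates bound each piece of $\|\widetilde{\bu}-\bv_h\|^*_{\mathrm{DG}}$ --- the volume terms $\|\div_h(\cdot)\|_{0,\O}$ and $\|\cdot\|_{0,\O}$ and the face terms $\|h_{\cF}^{-1/2}\jump{\cdot}\|_{0,\cF^*_h}$ and $\|h_{\cF}^{1/2}\mean{\div_h(\cdot)}\|_{0,\cF^*_h}$ --- by $h^s\bigl(\|\widetilde{\bu}\|_{s,\O}+\|\div\widetilde{\bu}\|_{1+s,\O}\bigr)$, giving the second estimate.

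The step I expect to be the main obstacle is the consistency/Galerkin orthogonality: one must justify the element-wise integration by parts and the exact recombination of the interface averages with the consistency term of $a_h$, which hinges crucially on the additional regularity $\div\widetilde{\bu}\in\H^{1+s}(\O)$ furnished by Lemma~\ref{lmm:invariant} (needed both to make $\mean{\div_h\widetilde{\bu}}$ single-valued and to give meaning to its face traces), together with careful bookkeeping of the boundary faces against the essential condition $\widetilde{\bu}\cdot\bn=0$. Once this identity is secured, the coercivity/continuity and approximation steps are entirely routine.
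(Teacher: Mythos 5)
Your overall strategy---Galerkin orthogonality, then the coercivity/continuity (C\'ea-type) argument via Lemma~\ref{discElip} and the boundedness of $a_h$, then interpolation of $\widetilde{\bu}$---is exactly the route the paper intends (it gives no proof, implicitly deferring to the analogous argument in its Stokes reference). However, two of your key steps fail as written. The first is the consistency step. Element-wise integration by parts of $\int_\O\rho(x) c^2\div\widetilde{\bu}\,\div_h\bv_h$ produces, besides the interior-face averages that cancel against $-\int_{\cF_h^*}\mean{\rho(x) c^2\div_h\widetilde{\bu}}\jump{\bv_h}$, the boundary contribution
\begin{equation*}
\sum_{F\in\cF_h^{\partial}}\int_F \rho(x)c^2\,\div\widetilde{\bu}\,\big(\bv_h\cdot\n\big),
\end{equation*}
and your claim that this is discarded by the essential condition $\widetilde{\bu}\cdot\n=0$ is a non sequitur: the factor appearing here is the normal trace of the \emph{test function} $\bv_h$, which is unconstrained in $\mV_h=\cP_k(\cT_h)^{\texttt{d}}$, times the trace of the flux $\rho c^2\div\widetilde{\bu}$, which does not vanish on $\partial\O$ (the rigid-wall problem imposes no boundary condition on $\div\bu$; e.g.\ with $\rho\equiv c\equiv1$ and $\bf=\nabla q$ for a Neumann Laplace eigenfunction $q$, one gets $\widetilde{\bu}=\nabla q/(1+\mu)$ and $\div\widetilde{\bu}=-\mu q/(1+\mu)\neq0$ on $\partial\O$). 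Galerkin orthogonality holds only if the form carries matching consistency and penalty terms on boundary faces, where the boundary penalty and $\varepsilon$-terms are killed precisely by $\jump{\widetilde{\bu}}_F=\widetilde{\bu}\cdot\n=0$; with face integrals over $\cF_h^*$ only, as in \eqref{ah}, the orthogonality you assert is false, so this step needs to be repaired (arguably this is an inconsistency inherited from the paper's own definition, but your justification papers over it rather than resolving it).

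The second gap is in the rate. A ``broken polynomial interpolant'' plus trace estimates cannot give $h^s$ for the jump part of $\|\widetilde{\bu}-\bv_h\|^*_{\mathrm{DG}}$: since $\widetilde{\bu}$ lies only in $\H^s(\O)^{\texttt{d}}$ with $s\leq1$ (the extra regularity is in $\div\widetilde{\bu}$, not in $\widetilde{\bu}$ itself), the scaled trace inequality combined with element-wise approximation yields $\|h_{\cF}^{-1/2}\jump{\widetilde{\bu}-I_h\widetilde{\bu}}\|_{0,\cF_h^*}\lesssim h^{s-1}\|\widetilde{\bu}\|_{s,\O}$, which does not converge. Composing with $\mathcal{I}_h$ from Proposition~\ref{propC} does not rescue this, because the right-hand side of \eqref{L2Ph} is driven by the very same jump seminorm. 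The step works only if you take $\bv_h$ to be the $\H(\div)$-conforming (BDM) interpolant of $\widetilde{\bu}$ in $\mV_h^c$---well defined because $s>1/2$---so that $\jump{\widetilde{\bu}-\bv_h}\equiv0$ on every face, and then exploit its commuting property ($\div$ of the interpolant equals the elementwise $\L^2$-projection of $\div\widetilde{\bu}$) together with $\div\widetilde{\bu}\in\H^{1+s}(\O)$ to bound the volume divergence term and the mean-flux face term. A last, cosmetic point: your triangle-inequality argument produces the constant $1+M_{\mathrm{DG}}/\alpha_{\mathrm{DG}}$ rather than the stated $M_{\mathrm{DG}}/\alpha_{\mathrm{DG}}$.
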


Finally we derive the following result which holds for symmetric and non symmetric methods.
\begin{lemma}
For any $\bv_h\in\mV_h$, the following estimate holds
$$
\|(\bT-\bT^{\varepsilon}_h)\bv_h\|_{\mathrm{DG}}\lesssim h^s\|\bv_h\|_{\mathrm{DG}},
$$
where $s$ is the exponent provided by  Lemma \ref{lmm:G}
\end{lemma}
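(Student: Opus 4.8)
The plan is to decompose the discrete argument $\bv_h$ into three pieces that can be treated separately, the decisive point being that both $\bT$ and $\bT^{\varepsilon}_h$ reproduce the discretely divergence-free functions exactly. First I would set $\bw_h := \mathcal{I}_h\bv_h \in \mV_h^c$ with $\mathcal{I}_h$ from Proposition \ref{propC}, and split its conforming content with the discrete projection $\bP_h$, writing
\[
\bv_h = \underbrace{(\mathrm{Id}-\bP_h)\bw_h}_{=:\,\boldsymbol{\zeta}_h\in\mK_h} + \bP_h\bw_h + (\bv_h-\bw_h),
\]
so that, by linearity, $(\bT-\bT^{\varepsilon}_h)\bv_h$ splits into the corresponding three contributions. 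Note that $\mathrm{Id}-\bP_h$ maps into $\ker\bP_h=\mK_h$, so $\boldsymbol{\zeta}_h\in\mK_h\subset\mK$.

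For the first piece, since $\mu=1$ has eigenspace $\mK$ the operator $\bT$ restricts to the identity on $\mK$, whence $\bT\boldsymbol{\zeta}_h=\boldsymbol{\zeta}_h$; likewise, because $\boldsymbol{\zeta}_h$ is conforming ($\jump{\boldsymbol{\zeta}_h}=0$) and divergence-free, a direct inspection of \eqref{ah} gives $a_h(\boldsymbol{\zeta}_h,\cdot)=b(\boldsymbol{\zeta}_h,\cdot)$, so $\bT^{\varepsilon}_h\boldsymbol{\zeta}_h=\boldsymbol{\zeta}_h$ as well. Hence $(\bT-\bT^{\varepsilon}_h)\boldsymbol{\zeta}_h=\boldsymbol{0}$ and this term drops out entirely. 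This exact cancellation is the structural heart of the argument and the reason the non-compact part of the spectrum does not spoil the rate.

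For the second piece I would further write $\bP_h\bw_h=\bP\bw_h+(\bP_h-\bP)\bw_h$. The term $\bP\bw_h\in\bP(\mV)$ is precisely the setting of the preceding Proposition, which, combined with the regularity estimate of Lemma \ref{lmm:invariant}, yields $\|(\bT-\bT^{\varepsilon}_h)\bP\bw_h\|_{\mathrm{DG}}\lesssim h^s\|\bP\bw_h\|_{\div,\O}\lesssim h^s\|\bv_h\|_{\mathrm{DG}}$, using that $\bP$ is $\H(\div,\O)$-bounded and that $\|\bw_h\|_{\div,\O}\lesssim\|\bv_h\|_{\mathrm{DG}}$ by \eqref{equivN}. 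For the mismatch $(\bP_h-\bP)\bw_h\in\mV$, the projection estimate preceding Proposition \ref{propC} gives $\|(\bP-\bP_h)\bw_h\|_{\div,\O}\lesssim h^s\|\div\bw_h\|_{0,\O}\lesssim h^s\|\bv_h\|_{\mathrm{DG}}$, and the individual boundedness bounds $\|\bT\,\cdot\,\|_{\mathrm{DG}}\lesssim\|\cdot\|_{\div,\O}$ and $\|\bT^{\varepsilon}_h\,\cdot\,\|_{\mathrm{DG}}\lesssim\|\cdot\|_{\div,\O}$ transfer this $O(h^s)$ smallness to $\|(\bT-\bT^{\varepsilon}_h)(\bP_h-\bP)\bw_h\|_{\mathrm{DG}}$.

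Finally, for the nonconforming remainder $\bg:=\bv_h-\bw_h$ I would use that both solution operators see their argument only through the $\L^2$ functional $b(\bg,\cdot)$. Estimate \eqref{L2Ph} gives $\|\bg\|_{0,\O}\lesssim h\|\bv_h\|_{\mathrm{DG}}$, and testing $a(\bT\bg,\bT\bg)=b(\bg,\bT\bg)$ together with coercivity (and the analogous identity for $\bT^{\varepsilon}_h$ with \eqref{elipticCont} replaced by Lemma \ref{discElip}) bounds each of $\|\bT\bg\|_{\mathrm{DG}}$ and $\|\bT^{\varepsilon}_h\bg\|_{\mathrm{DG}}$ by $\|\bg\|_{0,\O}\lesssim h\|\bv_h\|_{\mathrm{DG}}\le h^s\|\bv_h\|_{\mathrm{DG}}$ since $s\le 1$. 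Summing the three contributions proves the claim. The step I expect to require the most care is the second piece: one must verify that replacing the discrete projection $\bP_h$ by the continuous $\bP$ costs only $O(h^s)$, so that the Proposition may legitimately be invoked on $\bP\bw_h\in\bP(\mV)$ rather than on the discrete gradient $\bP_h\bw_h$.
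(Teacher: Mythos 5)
Your proposal is correct, and it is essentially the argument the paper intends: the paper states this lemma without proof (leaning on the analogous result in its reference [MR4077220]), and the standard proof there uses exactly your decomposition — conforming interpolant $\mathcal{I}_h\bv_h$, the exact cancellation on the discrete kernel $\mK_h$ (where both $\bT$ and $\bT^{\varepsilon}_h$ act as the identity), the $\bP$ versus $\bP_h$ mismatch controlled by the projection estimate, and the nonconforming remainder controlled in $\L^2$ via \eqref{L2Ph}. All the ingredients you invoke (invariance of $\mK$ under $\bT$, conformity of $\mK_h\subset\mK$ killing every jump and divergence term in \eqref{ah}, $\H(\div,\O)$-boundedness of $\bP$, and the coercivity bounds for both solution operators) are available in the paper and are used legitimately, so the proof is complete.
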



\subsection{Convergence analysis and error estimates}
\label{subsec:error}
Since the solution operator $\bT$ is non compact, the analysis of the spectral convergence and error estimates will be performed under the approach of the theory of \cite{MR483400,MR483401}. Let us recall the following definitions, needed to perform the analysis. 
\begin{itemize}
\item P1. $\norm{\bT-\bT_h^{\varepsilon}}_{\mathcal{L}(\mV_h, \mV(h))}\to0$ as $h\to0$.

\item P2. $\forall\bv\in\mV$, there holds
$$\inf_{\bv_h\in \mV_h}\norm{\bv-\bv_h}_{\mathrm{DG}}\,\,\to0\quad\text{as}\quad h\to0.$$
\end{itemize}

In order to prove convergence between eigenspaces,
we introduce the following definitions: let
$\boldsymbol{x}\in\mV(h)$ and $\mathbb E$ and $\mathbb F$
be closed subspaces of $\mV(h)$. We define 
$$\delta(\boldsymbol{x},\mathbb E):=\inf_{\boldsymbol{y}\in\mathbb E}\norm{\boldsymbol{x}-\boldsymbol{y}}_{\mathrm{DG}},\quad\delta(\mathbb E,\mathbb F):=\sup_{\boldsymbol{y}\in\mathbb E:\,\norm{\boldsymbol{y}}_{\mathrm{DG}}=1}\delta(\boldsymbol{y},\mathbb F).$$
Hence, the gap between two closed subspaces is defined by 
$$\gap(\mathbb E,\mathbb F):=\max\set{\delta(\mathbb E,\mathbb F),\delta(\mathbb F,\mathbb E)}.$$

Let $\lambda\in(0,1)$ be an isolated eigenvalue of $\bT$
and let $D$ an open disk in the complex plane with boundary $\gamma$
such that $\lambda$ is the only eigenvalue of $\bT$ lying in $D$ and $\gamma\cap\sp(\bT)=\emptyset$.
We introduce the spectral projector corresponding to the continuous
and discrete solution operators $\bT$ and $\bT_h^{\varepsilon}$, respectively
$$
\begin{aligned}
	\mathcal{E}&:=\frac{1}{2\pi i}
\int_{\gamma}\left(z\boldsymbol{I}-\boldsymbol{T}\right)^{-1}\, dz:\mV(h)\longrightarrow \mV(h),\\
\mathcal{E}_h&:=\frac{1}{2\pi i}
\int_{\gamma}\left(z\boldsymbol{I}-\bT^{\varepsilon}_h\right)^{-1}\, dz:\mV(h)\longrightarrow \mV(h)
\end{aligned}
$$
where $\mathcal{E}_h$ is well-defined and bounded uniformly in 
$h$. 
Moreover, $\mathcal{E}|_{\mV}$ is a
spectral projection in $\mV$ onto the (finite dimensional)
eigenspace $\mathcal{E}(\mV)$ corresponding to the eigenvalue
$\lambda$ of $\bT$. In fact, we have that $\mathcal{E}(\mV(h)) = \mathcal{E}(\mV)$ (see \cite{MR3962898} for further details).
Moreover, $\mathcal{E}_h|_{\mV_h}$ is a projector in $\mV_h$
onto the eigenspace $\mathcal{E}_h(\mV_h)$  corresponding to
the eigenvalues of $\bT^{\varepsilon}_h:\, \mV_h \to \mV_h$ contained in $\gamma$.
We also have that $\mathcal{E}_h(\mV(h)) = \mathcal{E}_h(\mV_h)$.

Now, we will compare $\mathcal{E}_h(\mV_h)$ to $\mathcal{E}(\mV)$ in terms of the gap $\gap$.
The proof of the next auxiliary result follows 
from the definition of $\mathcal{E}$ and $\mathcal{E}_h$.

\begin{lemma}
\label{lot}
The following estimate holds 
\begin{equation*}\label{E-Eh}
\displaystyle \norm{\mathcal{E} - \mathcal{E}_h}_{\mathcal{L}(\mV_h, \mV(h))} \leq C\norm{\boldsymbol{T} - \boldsymbol{T}^{\varepsilon}_h}_{\mathcal{L}(\mV_h, \mV(h))},
\end{equation*}
where the hidden constant is independent of $h$.
\end{lemma}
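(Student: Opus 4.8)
The plan is to exploit the Dunford--Riesz contour representation of the spectral projectors together with the second resolvent identity, so that the difference $\mathcal{E}-\mathcal{E}_h$ is expressed entirely through the factor $\boldsymbol{T}-\bT^{\varepsilon}_h$ that we wish to isolate. First I would note that for every $z\in\gamma$ the point $z$ lies in the resolvent set of both $\boldsymbol{T}$ and $\bT^{\varepsilon}_h$ (for $h$ small enough), so the identity
$$
(z\boldsymbol{I}-\boldsymbol{T})^{-1}-(z\boldsymbol{I}-\bT^{\varepsilon}_h)^{-1}=(z\boldsymbol{I}-\boldsymbol{T})^{-1}(\boldsymbol{T}-\bT^{\varepsilon}_h)(z\boldsymbol{I}-\bT^{\varepsilon}_h)^{-1}
$$
holds. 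Integrating over $\gamma$ and invoking the definitions of $\mathcal{E}$ and $\mathcal{E}_h$ yields the representation
$$
\mathcal{E}-\mathcal{E}_h=\frac{1}{2\pi i}\int_{\gamma}(z\boldsymbol{I}-\boldsymbol{T})^{-1}(\boldsymbol{T}-\bT^{\varepsilon}_h)(z\boldsymbol{I}-\bT^{\varepsilon}_h)^{-1}\,dz.
$$

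Next I would track the domains and codomains of each factor to verify that the composition is well defined as a map $\mV_h\to\mV(h)$, which is exactly the norm appearing in the statement. Given $\bv_h\in\mV_h$, the discrete resolvent $(z\boldsymbol{I}-\bT^{\varepsilon}_h)^{-1}$ leaves the invariant subspace $\mV_h$ inside itself (recall that $\bT^{\varepsilon}_h:\mV_h\to\mV_h$), hence sends $\bv_h$ to an element of $\mV_h$; the factor $\boldsymbol{T}-\bT^{\varepsilon}_h$ then maps it into $\mV(h)$, contributing the norm $\norm{\boldsymbol{T}-\bT^{\varepsilon}_h}_{\mathcal{L}(\mV_h,\mV(h))}$; and finally $(z\boldsymbol{I}-\boldsymbol{T})^{-1}$ acts on $\mV(h)$. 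Estimating $\norm{\cdot}_{\mathrm{DG}}$ factor by factor and pulling the integral through, I obtain
$$
\norm{(\mathcal{E}-\mathcal{E}_h)\bv_h}_{\mathrm{DG}}\leq\frac{|\gamma|}{2\pi}\Big(\sup_{z\in\gamma}\norm{(z\boldsymbol{I}-\boldsymbol{T})^{-1}}_{\mathcal{L}(\mV(h))}\,\norm{(z\boldsymbol{I}-\bT^{\varepsilon}_h)^{-1}}_{\mathcal{L}(\mV_h)}\Big)\norm{\boldsymbol{T}-\bT^{\varepsilon}_h}_{\mathcal{L}(\mV_h,\mV(h))}\norm{\bv_h}_{\mathrm{DG}},
$$
where $|\gamma|$ denotes the fixed length of the contour.

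It then remains to absorb the supremum into a constant $C$ independent of $h$. The continuous resolvent $(z\boldsymbol{I}-\boldsymbol{T})^{-1}$ is bounded on the compact set $\gamma$ by a constant depending only on $\dist(\gamma,\sp(\boldsymbol{T}))$, which is positive by the choice of $\gamma$. The uniform bound on the discrete resolvent $(z\boldsymbol{I}-\bT^{\varepsilon}_h)^{-1}$ over $z\in\gamma$ is the genuine technical point: it requires keeping $\gamma$ uniformly separated from $\sp(\bT^{\varepsilon}_h)$ for all sufficiently small $h$, which is precisely the property that makes $\mathcal{E}_h$ well defined and bounded uniformly in $h$ (as already recorded above, and which follows from the norm convergence P1). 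With both resolvent bounds in hand the supremum is finite and $h$-independent, and since $\gamma$ has fixed length we conclude $\norm{\mathcal{E}-\mathcal{E}_h}_{\mathcal{L}(\mV_h,\mV(h))}\leq C\norm{\boldsymbol{T}-\bT^{\varepsilon}_h}_{\mathcal{L}(\mV_h,\mV(h))}$. I expect this uniform control of the discrete resolvent on $\gamma$ to be the main obstacle; the remainder is the standard resolvent-identity bookkeeping.
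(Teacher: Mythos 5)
Your proposal is correct and follows essentially the same route as the paper, which proves this lemma directly "from the definition of $\mathcal{E}$ and $\mathcal{E}_h$", i.e.\ precisely the Dunford contour representation combined with the second resolvent identity and the uniform boundedness of the discrete resolvent on $\gamma$ (guaranteed for $h$ small by property P1). Your domain bookkeeping — discrete resolvent preserving $\mV_h$, the difference $\boldsymbol{T}-\bT^{\varepsilon}_h$ supplying the $\mathcal{L}(\mV_h,\mV(h))$ norm, and the continuous resolvent bounded on the compact contour — is exactly the content the paper leaves implicit.
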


The following result will be used to establish
the approximation properties of the eigenfunctions of
the continuous problem by means of those of the discrete DG discretization (see \cite[Lemma 4.5]{MR4077220}).

\begin{lemma}
\label{conv}
The following estimate holds 
\[
\gap(\mathcal{E}(\mV), \mathcal{E}_h(\mV_h)) \leq C \Big( \norm{\bT - \boldsymbol{T}^{\varepsilon}_h}_{\mathcal{L}(\mV_h, \mV(h))}+
\delta(\mathcal{E}(\mV), \mV_h)\Big),
\]
where the hidden constant is independent of $h$.
\end{lemma}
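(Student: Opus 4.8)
The plan is to follow the abstract framework of Descloux--Nassif--Rappaz (the references \cite{MR483400,MR483401}) and adapt the Stokes-eigenvalue argument of \cite[Lemma 4.5]{MR4077220} to the present acoustic setting. The two quantities defining $\gap(\mathcal{E}(\mV),\mathcal{E}_h(\mV_h))$ are $\delta(\mathcal{E}(\mV),\mathcal{E}_h(\mV_h))$ and $\delta(\mathcal{E}_h(\mV_h),\mathcal{E}(\mV))$, and I would bound each of them separately, expressing everything through the spectral projectors $\mathcal{E}$ and $\mathcal{E}_h$ introduced above and using the resolvent-difference estimate already recorded in Lemma~\ref{lot}.

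First I would treat $\delta(\mathcal{E}(\mV),\mathcal{E}_h(\mV_h))$. Take $\bv\in\mathcal{E}(\mV)$ with $\norm{\bv}_{\mathrm{DG}}=1$. Since $\mathcal{E}_h(\mV_h)$ is exactly the range of the projector $\mathcal{E}_h$ restricted to $\mV_h$, the element $\mathcal{E}_h\bv_h$ is an admissible competitor in the infimum defining $\delta(\bv,\mathcal{E}_h(\mV_h))$, where $\bv_h$ is any discrete approximation of $\bv$. The standard decomposition is
$$
\bv-\mathcal{E}_h\bv_h=(\bv-\bv_h)+(\mathcal{E}-\mathcal{E}_h)\bv_h+\mathcal{E}(\bv_h-\bv),
$$
using that $\mathcal{E}\bv=\bv$ because $\bv$ lies in the invariant eigenspace. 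Taking $\mathrm{DG}$-norms and using the uniform boundedness of $\mathcal{E}$ on $\mV(h)$, the first and third terms are controlled by $\norm{\bv-\bv_h}_{\mathrm{DG}}$, which upon taking the infimum over $\bv_h\in\mV_h$ yields $\delta(\mathcal{E}(\mV),\mV_h)$, while the middle term is bounded by $\norm{\mathcal{E}-\mathcal{E}_h}_{\mathcal{L}(\mV_h,\mV(h))}$, which by Lemma~\ref{lot} is controlled by $\norm{\bT-\bT^{\varepsilon}_h}_{\mathcal{L}(\mV_h,\mV(h))}$. This produces exactly the two terms on the right-hand side.

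For the reverse quantity $\delta(\mathcal{E}_h(\mV_h),\mathcal{E}(\mV))$, I would take $\bv_h\in\mathcal{E}_h(\mV_h)$ with $\norm{\bv_h}_{\mathrm{DG}}=1$ and use $\mathcal{E}\bv_h$ as the competitor in $\mathcal{E}(\mV)$, writing
$$
\bv_h-\mathcal{E}\bv_h=\mathcal{E}_h\bv_h-\mathcal{E}\bv_h=(\mathcal{E}_h-\mathcal{E})\bv_h,
$$
where the first equality uses $\mathcal{E}_h\bv_h=\bv_h$ since $\bv_h$ is in the range of $\mathcal{E}_h$ on $\mV_h$. This term is immediately bounded by $\norm{\mathcal{E}-\mathcal{E}_h}_{\mathcal{L}(\mV_h,\mV(h))}$ and hence, via Lemma~\ref{lot}, by $\norm{\bT-\bT^{\varepsilon}_h}_{\mathcal{L}(\mV_h,\mV(h))}$. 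Taking the maximum of the two bounds and absorbing constants gives the claimed estimate. The main obstacle I anticipate is bookkeeping around the nonsymmetric case $\varepsilon\in\{-1,0,1\}$: when $\varepsilon\neq 1$ the operator $\bT^{\varepsilon}_h$ is not self-adjoint, so $\mathcal{E}_h$ need not be an orthogonal projection and the competitor elements $\mathcal{E}_h\bv_h$, $\mathcal{E}\bv_h$ must be justified through the uniform-in-$h$ boundedness of the spectral projectors (guaranteed by $\gamma\cap\sp(\bT)=\emptyset$ together with property P1), rather than through any norm-minimization property; care is needed to ensure the hidden constants stay independent of $h$ across both the symmetric and nonsymmetric methods.
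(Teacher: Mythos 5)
Your overall route is the same as the paper's: the paper disposes of this lemma by citing \cite[Lemma 4.5]{MR4077220}, and the argument there is exactly your projector argument---bound $\delta(\mathcal{E}(\mV),\mathcal{E}_h(\mV_h))$ and $\delta(\mathcal{E}_h(\mV_h),\mathcal{E}(\mV))$ separately using the spectral projectors, Lemma~\ref{lot}, the identification $\mathcal{E}(\mV(h))=\mathcal{E}(\mV)$, and the uniform-in-$h$ boundedness of $\mathcal{E}_h$ (which, as you correctly note, is all that is needed in the nonsymmetric cases $\varepsilon\in\{-1,0\}$; no orthogonality of the projectors enters).

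However, the key identity in your first step is false as written. Using $\mathcal{E}\bv=\bv$, your right-hand side $(\bv-\bv_h)+(\mathcal{E}-\mathcal{E}_h)\bv_h+\mathcal{E}(\bv_h-\bv)$ sums to $2\mathcal{E}\bv_h-\bv_h-\mathcal{E}_h\bv_h$, which equals $\bv-\mathcal{E}_h\bv_h$ only if $\bv+\bv_h=2\mathcal{E}\bv_h$, i.e.\ not in general. The correct decomposition is the two-term identity
\begin{equation*}
\bv-\mathcal{E}_h\bv_h=\mathcal{E}(\bv-\bv_h)+(\mathcal{E}-\mathcal{E}_h)\bv_h,
\end{equation*}
or, if you prefer three terms, replace $\mathcal{E}(\bv_h-\bv)$ in your formula by $(\boldsymbol{I}-\mathcal{E})(\bv_h-\bv)$. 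This is a repairable slip rather than a conceptual gap: with the corrected identity, the bounds you describe go through verbatim---the term $\mathcal{E}(\bv-\bv_h)$ is controlled by the uniform bound on $\mathcal{E}$ times $\norm{\bv-\bv_h}_{\mathrm{DG}}$, whose infimum over $\bv_h\in\mV_h$ yields $\delta(\mathcal{E}(\mV),\mV_h)$ (note one may take $\norm{\bv_h}_{\mathrm{DG}}\le 2$ for near-optimal $\bv_h$, so the middle term stays bounded by the projector-difference norm), and $(\mathcal{E}-\mathcal{E}_h)\bv_h$ is handled by Lemma~\ref{lot}. Your second direction, $\bv_h-\mathcal{E}\bv_h=(\mathcal{E}_h-\mathcal{E})\bv_h$ with competitor $\mathcal{E}\bv_h\in\mathcal{E}(\mV(h))=\mathcal{E}(\mV)$, is correct as written.
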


Now, we state the convergence properties of the DG methods.

\begin{theorem}
Let $\lambda\in(0,1)$ be an eigenvalue of $\bT$ of algebraic multiplicity $m$  and 
let $D_\lambda$ be a closed disk in the complex plane centered at $\kappa$ with  boundary $\gamma$ such that 
$D_\lambda \cap \sp(\boldsymbol{T}) = \set{\kappa}$. Let $\lambda_{1, h}, \ldots, \lambda_{m(h), h}$ be the eigenvalues of 
$\bT_h^{\varepsilon}$  lying in  $D_\lambda$ and repeated according to their algebraic multiplicity. 
Then, for any DG method defined by $\varepsilon\in\{-1,0,1\}$,
we have that $m(h) = m$ for $h$ sufficiently small and 
\[
\lim_{h\to 0} \max_{1\leq i \leq m} |\lambda -  \lambda_{i, h}| =0.
\]
Moreover, if $\mathcal{E}(\mV)$ is the eigenspace corresponding to $\lambda$ and $\mathcal{E}_h(\mV_h)$ is the
$\bT_h^{\varepsilon}$-invariant subspace of $\mV_h$ spanned by the eigenspaces  corresponding to
$\set{\lambda_{i, h},\hspace{0.1cm} i = 1,\ldots, m}$ then
\[
\lim_{h \to 0} \gap(\mathcal{E}(\mV), \mathcal{E}_h(\mV_h)) = 0.
\]
\end{theorem}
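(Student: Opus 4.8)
The plan is to read the statement as the ``payoff'' of the Descloux--Nassif--Rappaz theory for non-compact operators, whose two structural hypotheses are precisely the properties P1 and P2 listed above. First I would record that P1 holds: the last lemma of the previous subsection gives $\|(\bT-\bT^{\varepsilon}_h)\bv_h\|_{\mathrm{DG}}\lesssim h^s\|\bv_h\|_{\mathrm{DG}}$ for every $\bv_h\in\mV_h$, so passing to the supremum over unit vectors yields $\|\bT-\bT^{\varepsilon}_h\|_{\mathcal{L}(\mV_h,\mV(h))}\lesssim h^s\to0$, with $s\in(1/2,1]$ as in Lemma~\ref{lmm:G}. Property P2 is the standard approximation property of $\mV_h=\cP_k(\cT_h)^{\texttt{d}}$: any $\bv\in\mV$ is conforming so its jumps vanish, and piecewise polynomial approximation in the broken $\H(\div)$-seminorm gives $\inf_{\bv_h\in\mV_h}\|\bv-\bv_h\|_{\mathrm{DG}}\to0$. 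These two facts are the only inputs the abstract machinery requires.

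Next I would establish the gap convergence, i.e.\ the third displayed limit. Lemma~\ref{conv} bounds $\gap(\mathcal{E}(\mV),\mathcal{E}_h(\mV_h))$ by a constant times $\|\bT-\bT^{\varepsilon}_h\|_{\mathcal{L}(\mV_h,\mV(h))}+\delta(\mathcal{E}(\mV),\mV_h)$. The first summand tends to zero by P1. For the second, since $\mathcal{E}(\mV)$ is finite dimensional (of dimension $m$) its unit sphere is compact and $\bv\mapsto\delta(\bv,\mV_h)$ is Lipschitz in the $\mathrm{DG}$-norm, so the pointwise approximation of P2 (equivalently, the eigenfunction regularity of the spectral characterization combined with the interpolation estimate of Proposition just proved) upgrades to the uniform statement $\delta(\mathcal{E}(\mV),\mV_h)\to0$. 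Hence $\gap(\mathcal{E}(\mV),\mathcal{E}_h(\mV_h))\to0$, which is conclusion (3).

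With gap convergence in hand, conclusion (1) follows from the elementary fact that two closed subspaces at gap strictly less than one have equal dimension: for $h$ small enough $\gap(\mathcal{E}(\mV),\mathcal{E}_h(\mV_h))<1$, whence $\dim\mathcal{E}_h(\mV_h)=\dim\mathcal{E}(\mV)=m$, i.e.\ $m(h)=m$. Here I would emphasize that the argument is uniform in $\varepsilon\in\{-1,0,1\}$: although $\bT^{\varepsilon}_h$ need not be selfadjoint for $\varepsilon\neq1$ and the eigenvalues $\lambda_{i,h}$ may a priori be complex, $\mathcal{E}_h$ is defined by the contour integral over $\gamma$ and Lemma~\ref{lot} controls $\|\mathcal{E}-\mathcal{E}_h\|_{\mathcal{L}(\mV_h,\mV(h))}$ by $\|\bT-\bT^{\varepsilon}_h\|_{\mathcal{L}(\mV_h,\mV(h))}$ irrespective of symmetry, so no separate treatment is needed.

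Finally, for the eigenvalue convergence (conclusion (2)) I would rerun the same machinery on shrinking disks. Since $\lambda\in(0,1)$ is isolated in $\sp(\bT)$, for every sufficiently small $\rho>0$ the closed disk $D_\rho$ of radius $\rho$ centered at $\lambda$ satisfies $D_\rho\cap\sp(\bT)=\{\lambda\}$ and its boundary misses $\sp(\bT)$. Applying the argument above verbatim with $D_\rho$ in place of $D_\lambda$ shows that for $h$ small exactly $m$ eigenvalues of $\bT^{\varepsilon}_h$, counted with multiplicity, lie in $D_\rho$. As $D_\rho\subset D_\lambda$ and both disks enclose exactly $m$ eigenvalues, they enclose the same ones, so all of $\lambda_{1,h},\ldots,\lambda_{m,h}$ lie in $D_\rho$, giving $\max_{1\le i\le m}|\lambda-\lambda_{i,h}|\le\rho$ for $h$ below a threshold depending on $\rho$; letting $\rho\to0$ yields the stated limit. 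The only genuinely delicate ingredient is the absence of spectral pollution encoded in P1, but this has already been secured by the $h^s$ estimate, so the present proof is essentially the assembly of Lemmas~\ref{lot} and~\ref{conv} within the Descloux--Nassif--Rappaz framework.
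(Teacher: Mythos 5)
Your proposal is correct and takes essentially the same route as the paper: the paper's proof is simply a citation to \cite[Theorem 5.2]{MR3962898}, and that cited proof is precisely the Descloux--Nassif--Rappaz assembly you reconstruct, namely P1 from the discrete-source estimate $\|(\bT-\bT^{\varepsilon}_h)\bv_h\|_{\mathrm{DG}}\lesssim h^s\|\bv_h\|_{\mathrm{DG}}$, P2 from conforming polynomial approximation, gap convergence via Lemmas~\ref{lot} and~\ref{conv}, the dimension count from $\gap<1$, and eigenvalue convergence by shrinking disks. The only point worth flagging is that the uniform well-posedness of $\mathcal{E}_h$ (no discrete spectrum on $\gamma$ for small $h$) deserves explicit mention as a consequence of P1, but the paper itself asserts this before the theorem, so your argument is complete in context.
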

\begin{proof}
See proof of  \cite[Theorem 5.2]{MR3962898}.
\end{proof}


Let us introduce the following distance
\[
\delta^* (\mathcal{E}(\mV) , \mV_h):= \sup_{{\bv\in \mathcal{E}(\mV)}\atop{\norm{\bv}_{\mathrm{DG}}=1}}\inf_{\bv_h\in \mV_h} \norm{\bv - \bv_h}^*_{\mathrm{DG}}.
\]

With this distance at hand, we derive the following results, which  have been established in  \cite[Theorem 6.1]{MR3962898}
for a fixed eigenvalue $\lambda\in(0,1)$ of $\bT$.

\begin{theorem}
\label{hatgap}
For $h$ small enough, the following estimate holds
$$
\gap\big(\mathcal{E}(\mV), \mathcal{E}_h(\mV_h) \big)\lesssim \delta^* (\mathcal{E}(\mV) , \mV_h),
$$
where the hidden constant is independent of $h$.
 \end{theorem}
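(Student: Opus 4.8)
The plan is to establish the bound
$$
\gap\big(\mathcal{E}(\mV), \mathcal{E}_h(\mV_h)\big)\lesssim \delta^*(\mathcal{E}(\mV),\mV_h)
$$
by invoking Lemma \ref{conv}, which already reduces the gap to the two quantities $\norm{\bT-\bT_h^{\varepsilon}}_{\mathcal{L}(\mV_h,\mV(h))}$ and $\delta(\mathcal{E}(\mV),\mV_h)$. Since $\delta(\mathcal{E}(\mV),\mV_h)\leq\delta^*(\mathcal{E}(\mV),\mV_h)$ holds trivially (the $*$-norm dominates the DG-norm), the whole task is to show that the operator-norm term is also controlled by $\delta^*(\mathcal{E}(\mV),\mV_h)$ for $h$ small. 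The key observation is that $\norm{\bT-\bT_h^{\varepsilon}}_{\mathcal{L}(\mV_h,\mV(h))}$ only needs to be measured on the range of the spectral projector, because the spectral projector $\mathcal{E}$ localizes everything to the finite-dimensional eigenspace $\mathcal{E}(\mV)$ associated to the fixed eigenvalue $\lambda$.

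First I would make this localization precise. For $\bv_h\in\mV_h$ one estimates $(\bT-\bT_h^{\varepsilon})\bv_h$ by inserting the spectral projector and splitting into a piece living on $\mathcal{E}(\mV)$ and a complementary piece. The complementary part is handled by the best-approximation result (the Proposition preceding this theorem) together with the invariance and compactness properties of $\bT$ on $\bP(\mV)$ from Lemma \ref{lmm:invariant}, giving a contribution of higher order in $h$ that is negligible compared to $\delta^*$. The essential part is therefore the action of $\bT-\bT_h^{\varepsilon}$ on elements of $\mathcal{E}(\mV)$. Here I would use the best-approximation estimate of the Proposition, which bounds $\norm{(\bT-\bT_h^{\varepsilon})\bf}_{\mathrm{DG}}$ by $\inf_{\bv_h}\norm{\bT\bf-\bv_h}^*_{\mathrm{DG}}$; applied to a basis of the finite-dimensional space $\mathcal{E}(\mV)$, and using $\bT\mathcal{E}(\mV)=\mathcal{E}(\mV)$, this infimum is precisely of the form appearing in the definition of $\delta^*(\mathcal{E}(\mV),\mV_h)$.

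Next I would assemble these pieces. Combining the localization with the best-approximation bound on the finite-dimensional eigenspace yields
$$
\norm{\bT-\bT_h^{\varepsilon}}_{\mathcal{L}(\mV_h,\mV(h))}\lesssim \delta^*(\mathcal{E}(\mV),\mV_h)+\text{h.o.t.},
$$
where the higher-order term is absorbed for $h$ small enough. Substituting this and the inequality $\delta(\mathcal{E}(\mV),\mV_h)\leq\delta^*(\mathcal{E}(\mV),\mV_h)$ back into the conclusion of Lemma \ref{conv} gives the claimed estimate.

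The main obstacle I anticipate is the localization step: making rigorous that the operator norm of $\bT-\bT_h^{\varepsilon}$ over all of $\mV_h$ is genuinely controlled by its behavior on the finite-dimensional space $\mathcal{E}(\mV)$. This requires carefully using the uniform boundedness of $\mathcal{E}_h$ in $h$, the convergence property P1, and the identity $\mathcal{E}(\mV(h))=\mathcal{E}(\mV)$, so that the complementary component truly contributes only higher-order terms. Since this argument is structurally identical to the one carried out in \cite[Theorem 6.1]{MR3962898}, I would follow that template, adapting the constants to account for the density and sound-speed factors entering $a_h(\cdot,\cdot)$ and the modified norms $\norm{\cdot}_{\mathrm{DG}}$ and $\norm{\cdot}^*_{\mathrm{DG}}$ used here.
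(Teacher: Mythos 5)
Your strategy contains a genuine gap, and it is located exactly where you anticipated the "main obstacle": the claimed localization is false. You try to deduce the theorem from Lemma \ref{conv} by showing $\norm{\bT-\bT_h^{\varepsilon}}_{\mathcal{L}(\mV_h,\mV(h))}\lesssim \delta^*(\mathcal{E}(\mV),\mV_h)+\text{h.o.t.}$, but this inequality cannot hold in general. The operator norm runs over all of $\mV_h$, and for a generic $\bv_h$ the complementary component $(\boldsymbol{I}-\mathcal{E})\bv_h$ has norm of order one. The only tool available for $(\bT-\bT_h^{\varepsilon})(\boldsymbol{I}-\mathcal{E})\bv_h$ is quasi-optimality combined with the source-problem regularity of Lemma \ref{lmm:invariant}, i.e.\ $\bT(\boldsymbol{I}-\mathcal{E})\bv_h\in \H^s(\O)^{\dd}$ with divergence in $\H^{1+s}(\O)$ and $s\in(1/2,1]$ only. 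This piece therefore contributes $O(h^{\min\{s,k\}})=O(h^s)$, which is \emph{not} higher order than $\delta^*(\mathcal{E}(\mV),\mV_h)=O(h^{\min\{r,k\}})$; it is generically of \emph{lower} order (larger). On a convex domain with smooth eigenfunctions and $k\geq 2$ your claimed bound reads $h\lesssim h^k+\text{h.o.t.}$, which fails; moreover, if one follows your route, the gap (and hence the eigenvalue rates of Theorem \ref{errorE}) would be capped at order $h^s$, contradicting the $O(h^{2k})$ eigenvalue convergence proved and observed numerically in the paper. This is precisely why Theorem \ref{hatgap} is a separate, sharper statement than Lemma \ref{conv}: its whole point is to \emph{remove} the global operator-norm term, which cannot be absorbed into $\delta^*$.

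The proof the paper invokes (\cite[Theorem 6.1]{MR3962898}) localizes in a different way, and the order of operations is the crux. For $\bu\in\mathcal{E}(\mV)$ one writes
$$
\bu-\mathcal{E}_h\bu=\frac{1}{2\pi i}\int_{\gamma}\left(z\boldsymbol{I}-\bT_h^{\varepsilon}\right)^{-1}\left(\bT-\bT_h^{\varepsilon}\right)\left(z\boldsymbol{I}-\bT\right)^{-1}\bu\,dz,
$$
with the \emph{continuous} resolvent acting first: since $(z\boldsymbol{I}-\bT)^{-1}$ maps $\mathcal{E}(\mV)$ into itself, the difference $\bT-\bT_h^{\varepsilon}$ is only ever applied to eigenspace elements, so no complementary piece arises at all. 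For such elements, the quasi-optimality bound of the Proposition together with the $\bT$-invariance of $\mathcal{E}(\mV)$ produces exactly the factor $\delta^*(\mathcal{E}(\mV),\mV_h)$, while the discrete resolvent $(z\boldsymbol{I}-\bT_h^{\varepsilon})^{-1}$ is bounded on $\gamma$ uniformly in $h$ for $h$ small (this is where P1 and the hypothesis ``$h$ small enough'' enter). This gives $\delta(\mathcal{E}(\mV),\mathcal{E}_h(\mV_h))\lesssim\delta^*(\mathcal{E}(\mV),\mV_h)$; the reverse distance is then handled not through operator norms but through the finite-dimensional argument: once $\dim\mathcal{E}_h(\mV_h)=\dim\mathcal{E}(\mV)=m$ (valid for small $h$ by the convergence theorem) and $\delta(\mathcal{E}(\mV),\mathcal{E}_h(\mV_h))<1$, one has $\delta(\mathcal{E}_h(\mV_h),\mathcal{E}(\mV))\leq\delta(\mathcal{E}(\mV),\mathcal{E}_h(\mV_h))\left(1-\delta(\mathcal{E}(\mV),\mathcal{E}_h(\mV_h))\right)^{-1}$, whence the bound on $\gap(\mathcal{E}(\mV),\mathcal{E}_h(\mV_h))$. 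So your appeal to ``the same template'' as the reference does not save the argument: the template you actually describe (Lemma \ref{conv} plus absorption of the operator-norm term) is not the one used there, and it is the one step that cannot be made to work.
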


Finally, with the aid of Proposition~\ref{lmm:G},
we present the rates of convergence of the proposed DG methods. The proof of the following result is identical to the one contained in \cite[Theorem 4.4]{MR4077220} and holds for both, symmetric and nonsymmetric DG methods.
\begin{theorem}
\label{errorE}
Let $r>0$ be such that
$\mathcal{E}(\mV) \subset \{\bv\in\H^r(\O):\,\,\div\bv\in\H^{1+r}(\O)^n\}$
(cf. Proposition~\ref{lmm:G}). Then, for $\varepsilon\in\{-1,0,1\}$ we have
$$
\displaystyle\gap(\mathcal{E}_h(\mV_h), \mathcal{E}(\mV))\lesssim h^{\min\{r, k\}}\quad \text{ and }\quad
\max_{1\leq i\leq m} |\lambda - \lambda_{i, h}|\lesssim \, h^{\sigma\min\{r, k\}},
$$
where $\sigma:=\frac{(\varepsilon+2)^{\varepsilon}-1}{2}+1$ and the hidden constants are independent of $h$.
\end{theorem}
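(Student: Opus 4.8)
The plan is to prove the two estimates separately. For the eigenfunction (gap) estimate I would start from Theorem~\ref{hatgap}, which already reduces the gap to the $*$-distance,
$$
\gap\big(\mathcal{E}_h(\mV_h),\mathcal{E}(\mV)\big)\lesssim\delta^*\big(\mathcal{E}(\mV),\mV_h\big),
$$
so that it only remains to bound $\delta^*(\mathcal{E}(\mV),\mV_h)\lesssim h^{\min\{r,k\}}$. Since $\mathcal{E}(\mV)$ is finite dimensional, it suffices to estimate $\inf_{\bv_h\in\mV_h}\|\bv-\bv_h\|^*_{\mathrm{DG}}$ for a fixed $\bv\in\mathcal{E}(\mV)$ with $\|\bv\|_{\mathrm{DG}}=1$. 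I would take $\bv_h$ to be a conforming interpolant in $\mV_h^c\subset\H(\div,\O)$, so that the jump contributions $\jump{\bv-\bv_h}$ vanish, and then bound the surviving terms of $\|\cdot\|^*_{\mathrm{DG}}$ (the $\L^2$ error, the broken divergence error, and the weighted divergence-average term over $\cF_h^*$) by standard Bramble--Hilbert approximation together with the discrete trace inequality \eqref{discTrace}. Using the hypothesis $\bv\in\H^r(\O)$, $\div\bv\in\H^{1+r}(\O)$ (cf. Proposition~\ref{lmm:G}), this gives local rates $h^{\min\{r,k+1\}}$ for the $\L^2$ term and $h^{\min\{1+r,k\}}$ for the divergence and face terms, whose minimum is precisely $h^{\min\{r,k\}}$. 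This is the stated eigenfunction rate, valid for every $\varepsilon\in\{-1,0,1\}$.

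For the eigenvalue estimate I would first establish the generic single-order bound. Within the Descloux--Nassif--Rappaz framework of \cite{MR483400,MR483401}, and using Lemmas~\ref{lot} and~\ref{conv}, the eigenvalue error is controlled by the operator defect on the eigenspace,
$$
\max_{1\leq i\leq m}|\lambda-\lambda_{i,h}|\lesssim\big\|(\bT-\bT^{\varepsilon}_h)|_{\mathcal{E}(\mV)}\big\|_{\mathcal{L}(\mV_h,\mV(h))}\lesssim h^{\min\{r,k\}},
$$
the last inequality following from the best-approximation Proposition and the regularity of the eigenfunctions. This already matches the exponent $\sigma=1$, which is the value of $\sigma:=\frac{(\varepsilon+2)^{\varepsilon}-1}{2}+1$ for the incomplete ($\varepsilon=0$) and non-symmetric ($\varepsilon=-1$) methods.

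To reach the doubled rate $\sigma=2$ in the symmetric case $\varepsilon=1$, I would exploit that $a_h(\cdot,\cdot)$ is then symmetric, so $\bT^1_h$ is self-adjoint with respect to the discrete forms. I would derive a spectral error identity of Babu\v{s}ka--Osborn type: for a normalized discrete eigenfunction $\bu_h$ and the corresponding continuous eigenfunction $\bu$,
$$
\lambda-\lambda_{i,h}=\frac{a_h(\bu-\bu_h,\bu-\bu_h)-\lambda\,b(\bu-\bu_h,\bu-\bu_h)}{b(\bu_h,\bu_h)}+R_h,
$$
where both the leading quotient and the non-conformity remainder $R_h$ (the DG consistency terms arising because $\bu\notin\mV_h$) are quadratic in the eigenfunction error. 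Combining the eigenfunction rate of the first step with the adjoint consistency of the symmetric method, each factor is $O(h^{\min\{r,k\}})$, so their product yields $|\lambda-\lambda_{i,h}|\lesssim h^{2\min\{r,k\}}$. The unified exponent $\sigma$ merely records these two outcomes, namely $\sigma=2$ for $\varepsilon=1$ and $\sigma=1$ for $\varepsilon\in\{-1,0\}$.

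I expect the principal obstacle to be the rigorous treatment of the non-conformity in the symmetric eigenvalue identity: controlling the interior-face terms in $R_h$ and proving that they do not degrade the double order requires the adjoint consistency of the SIP form together with careful use of \eqref{discTrace} and Proposition~\ref{propC}, which is exactly the point at which the symmetric and non-symmetric cases diverge. A secondary technical point is that one must approximate in the stronger $*$-norm rather than in $\|\cdot\|_{\mathrm{DG}}$; this is precisely why the hypothesis carries the extra divergence regularity $\div\bv\in\H^{1+r}(\O)$, needed to bound the weighted average term $\|h_{\cF_h}^{1/2}\mean{\div_h(\bv-\bv_h)}\|_{0,\cF_h^*}$.
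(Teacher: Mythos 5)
Your proposal is correct and takes essentially the same approach as the paper, whose own ``proof'' is simply a deferral to \cite[Theorem 4.4]{MR4077220}: that argument is exactly what you reconstruct, namely the gap bound via Theorem~\ref{hatgap} combined with conforming (BDM-type) interpolation estimates in the $\|\cdot\|^*_{\mathrm{DG}}$-norm under the regularity $\bv\in\H^r(\O)$, $\div\bv\in\H^{1+r}(\O)$, and the eigenvalue bound obtained generically with $\sigma=1$ from the operator convergence, then doubled for $\varepsilon=1$ through the symmetric Babu\v{s}ka--Osborn-type identity whose remainder is controlled by (adjoint) consistency of the SIP form. The only cosmetic caveats are that the face-average term needs a scaled continuous trace inequality in addition to \eqref{discTrace} (since $\div\bv-\div_h\bv_h$ is not piecewise polynomial), and that your operator-defect bound should be read as an operator on $\mathcal{E}(\mV)$ rather than on $\mV_h$; neither affects the validity of the argument.
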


 \section{The  pressure formulation}
\label{sec:pressure}
An alternative approach to analyze the acoustic problem 	\eqref{def:acustica} is to consider the  pressure formulation which is obtained eliminating the displacement in \eqref{def:acustica}, using the second equation, in order to write
\begin{equation}\label{def:acustica_pressure}
\left\{
\begin{aligned}
 c^2\div \left(\frac{1}{\rho(x)}\nabla p\right)+\frac{\omega^2}{\rho(x)} p& = &0&\text{ in }\,\O,\\
\frac{\partial p}{\partial\boldsymbol{n}}&=&0&\text{ on }\,\partial\O,
\end{aligned}
\right.
\end{equation}
where \eqref{def:acustica_pressure} is now a scalar problem. Clearly the displacement is recovered with a postprocess of the pressure, using the first equation on \eqref{def:acustica}.  Let us remark that the computed solutions, namely the eigenvalues and eigenfunctions, are exactly the same when the pure displacement formulation is considered.  
 A variational formulation for this problem is: Find $\omega\in\mathbb{R}$ and $0\neq p\in \H^1(\O)$ such that
$$
\displaystyle\int_{\O}\frac{1}{\rho(x)}\nabla p\cdot\nabla v=\omega^2\int_{\O}\frac{1}{\rho(x)}pv \quad\forall v\in \H^1(\O).
$$

With a shift argument we arrive to the following problem: Find $\omega\in\mathbb{R}$ and $0\neq p\in \H^1(\O)$ such that
\begin{equation}
\label{eq:pression}
a^p(p,v)=(\omega^2+1)b^p(p,v)\quad\forall v\in \H^1(\O),
\end{equation}
where the bilinear forms $a:\H^1(\O)\times \H^1(\O)\rightarrow\mathbb{R}$ and $b:\H^1(\O)\times \H^1(\O)\rightarrow\mathbb{R}$
are defined for all $q,v\in \H^1(\O)$ by 
$$ a^p(q,v):=c^2\int_{\O}\frac{1}{\rho(x)}\nabla q\cdot\nabla v+\int_{\O}\frac{1}{\rho(x)}qv\quad\text{and}\quad b^p(q,v):=\int_{\O}\frac{1}{\rho(x)}qv.
$$
Note that $a^p(\cdot,\cdot)$ is coercive in $\H^1(\O)$. Also, the solution operator $T^p: \L^2(\O)\rightarrow \H^1(\O)\hookrightarrow \L^2(\O)$ is defined by 
$T^pf=\widetilde{p}$ where $\widetilde{p}\in \H^1(\O)$ is the solution of the corresponding  associated source problem. This solution operator results to be compact due the compact inclusion of $\H^1(\O)$ onto $\L^2(\O)$. On the other hand, 
from Lemma \ref{lmm:invariant}, we have that $\bu\in\H^s(\O)$ with $s\in(1/2,1]$. Hence, from the first equation of \eqref{def:acustica} we have that $\nabla p\in \H^s(\O)$ and hence $p\in\H^{1+s}(\O)$, providing the regularity requirement for the eigenfunctions when problem \eqref{eq:pression} is considered.
For this case, and for any $k\geq 1$,  the finite element spaces are 
$\mathcal{V}_h^p := \cP_k(\cT_h)$ and $\mathcal{V}_h^{p,c} := \mathcal{V}_h^p \cap \H^1(\O)$, which now are 
scalar.
Inspired by the analysis of \cite{MR2220929}, let us define $\mathcal{V}^p(h):=\H^1(\O)+\mathcal{V}^p_h$ which we endow with the following seminorm
$
\| v\|_{\mathcal{V}(h)}^2=\|\nabla_h v\|_{0,\O}^2+\|h_{\mathcal{F}}^{-1/2}\jump{ v}\|_{0,\mathcal{F}_h}^2.
$
Then, we define the norm $\|v_h\|_{DG}^2=\|v_h\|_{0,\O}^2+\|v_h\|_{\mathcal{V}(h)}^2$.

The DG discretization for problem \eqref{eq:pression} is as follows: Find $\omega_h\in\mathbb{C}$ and $0\neq p_h\in\mV_h^p$ such that 
\begin{equation}
\label{eq:pression_discreto}
a_h^p(p_h,v_h)=(\omega_h^2+1)b^p(p_h,v_h)\quad\forall v_h\in \mV_h^p,
\end{equation}
where 
\begin{multline*}
\label{eq:Ahp}
a_h^p(p_h, v_h):=c^2\int_{\O}\frac{1}{\rho(x)}\nabla_h p_h\cdot\nabla_h v_h+ \int_{\O}\frac{1}{\rho(x)}p_hv_h+\int_{\cF^*_h}\frac{\texttt{a}_{S,p}}{h_{\cF}}\jump{p_h}\cdot\jump{v_h}\\
- \int_{\cF^*_h}\frac{1}{\rho(x)}\mean{c^2\nabla_h p_h}\cdot\jump{v_h}
-\varepsilon \int_{\cF^*_h}\frac{1}{\rho(x)}\mean{c^2\nabla_h v_h}\cdot\jump{p_h}.
\end{multline*}

Let us prove that $a_h^p(\cdot,\cdot)$ is coercive in $\mV_h^p$.
\begin{lemma}[ellipticity of $a_h^p(\cdot,\cdot)$]
For any $\varepsilon\in\{-1,0,1\}$, there exists a positive parameter $\texttt{a}_p^*$ such that for all $\texttt{a}_{S,p}\geq \texttt{a}_p^*$ there holds
$$a_h^p(v_h,v_h)\geq \alpha_p\| v_h\|_{DG}^2\quad\forall v_h\in\mathcal{V}_h^p,$$
where $\alpha_p>0$ is independent of $h$.
\end{lemma}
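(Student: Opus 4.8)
The plan is to test the form with $v_h=p_h$ and to show that, once $\texttt{a}_{S,p}$ is taken large enough, the two consistency terms are absorbed by the volumetric gradient term and the penalty term. This is the scalar analogue of Lemma~\ref{discElip}: one replaces $\div_h$ by $\nabla_h$ and the weights $\rho(x)c^2,\rho(x)$ by $c^2/\rho(x),1/\rho(x)$, so the structure of the argument of \cite[Lemma~3.2]{MR4077220} carries over. Setting $v_h=p_h$ and grouping the two skew terms, I would first record
\[
a_h^p(p_h,p_h)=c^2\!\int_{\O}\frac{1}{\rho(x)}\abs{\nabla_h p_h}^2+\int_{\O}\frac{1}{\rho(x)}\abs{p_h}^2+\texttt{a}_{S,p}\norm{h_{\cF}^{-1/2}\jump{p_h}}_{0,\cF_h^*}^2-(1+\varepsilon)\!\int_{\cF_h^*}\frac{c^2}{\rho(x)}\mean{\nabla_h p_h}\cdot\jump{p_h}.
\]
For the nonsymmetric method ($\varepsilon=-1$) the last term disappears and, using $\rho(x)\le\overline{\rho}$, coercivity holds for every $\texttt{a}_{S,p}>0$; only the cases $\varepsilon\in\{0,1\}$, i.e. $1+\varepsilon\in\{1,2\}$, require work.

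The main obstacle is to control the consistency term by the coercive contributions. I would insert the weights $h_\cF^{1/2}h_\cF^{-1/2}$, apply the Cauchy--Schwarz inequality on $\cF_h^*$, and use $\underline{\rho}\le\rho(x)$ to obtain
\[
\abs{\int_{\cF_h^*}\frac{c^2}{\rho(x)}\mean{\nabla_h p_h}\cdot\jump{p_h}}\le\frac{c^2}{\underline{\rho}}\norm{h_\cF^{1/2}\mean{\nabla_h p_h}}_{0,\cF_h^*}\norm{h_\cF^{-1/2}\jump{p_h}}_{0,\cF_h^*}.
\]
Since each component of $\nabla_h p_h$ lies in $\cP_{k-1}(\cT_h)\subset\cP_k(\cT_h)$, the discrete trace inequality \eqref{discTrace} gives $\norm{h_\cF^{1/2}\mean{\nabla_h p_h}}_{0,\cF_h^*}\le C_{\mathrm{tr}}\norm{\nabla_h p_h}_{0,\O}$, with $C_{\mathrm{tr}}>0$ depending only on $\O$ and $k$. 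Young's inequality with a free parameter $\delta>0$ then bounds the consistency term by $(1+\varepsilon)\tfrac{c^2}{\underline{\rho}}\big(\tfrac{\delta}{2}C_{\mathrm{tr}}^2\norm{\nabla_h p_h}_{0,\O}^2+\tfrac{1}{2\delta}\norm{h_\cF^{-1/2}\jump{p_h}}_{0,\cF_h^*}^2\big)$.

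To conclude, I would estimate $c^2\int_\O\rho(x)^{-1}\abs{\nabla_h p_h}^2\ge(c^2/\overline{\rho})\norm{\nabla_h p_h}_{0,\O}^2$ and fix $\delta$ so small that $(1+\varepsilon)(c^2/\underline{\rho})(\delta/2)C_{\mathrm{tr}}^2\le c^2/(2\overline{\rho})$, so that half of the gradient term absorbs the volumetric part of the consistency bound. With this $\delta$ frozen I would set $\texttt{a}_p^*:=(1+\varepsilon)c^2/(2\delta\,\underline{\rho})$, so that for every $\texttt{a}_{S,p}\ge\texttt{a}_p^*$ the penalty term controls the remaining jump contribution; collecting the surviving terms leaves
\[
a_h^p(p_h,p_h)\ge\frac{c^2}{2\overline{\rho}}\norm{\nabla_h p_h}_{0,\O}^2+\frac{1}{\overline{\rho}}\norm{p_h}_{0,\O}^2+(\texttt{a}_{S,p}-\texttt{a}_p^*)\norm{h_\cF^{-1/2}\jump{p_h}}_{0,\cF_h^*}^2.
\]
Recalling the definition of $\norm{\cdot}_{DG}$ (for the homogeneous Neumann problem the boundary faces carry no jump, so the relevant jump seminorm is the one over $\cF_h^*$), the claim follows with $\alpha_p:=\min\{c^2/(2\overline{\rho}),\,1/\overline{\rho},\,\texttt{a}_{S,p}-\texttt{a}_p^*\}>0$, independent of $h$. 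As in the displacement case, the principal subtlety is that both $\texttt{a}_p^*$ and $\alpha_p$ depend on $c$, $\underline{\rho}$, $\overline{\rho}$ and on the geometric constant $C_{\mathrm{tr}}$, which is precisely what makes the method sensitive to the choice of the penalization parameter.
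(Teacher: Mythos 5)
Your proof is correct, and its skeleton is the one the paper uses: diagonal testing, merging the two face terms into a single consistency term with factor $(1+\varepsilon)$, the discrete trace inequality \eqref{discTrace}, and absorption into the gradient and penalty contributions. The genuinely different step is how the absorption is carried out, and your version is the stronger one. The paper applies Young's inequality with fixed weight $1$, so the volumetric half of the consistency bound must be swallowed by the gradient term as is; its final constant is $C_1=\min\bigl\{\tfrac{1}{\overline{\rho}}\bigl(c^2-C\tfrac{1+\varepsilon}{2}\bigr),\tfrac{1}{\overline{\rho}}\bigr\}$, and positivity of $C_1$ amounts to the requirement $c^2>C(1+\varepsilon)/2$ (stated in the paper as ``$\nu>C(1+\varepsilon)/2$'', with a viscosity $\nu$ inherited from the Stokes reference \cite{MR4077220} that does not exist in this problem). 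That is a constraint linking the sound speed to the trace constant, and no choice of $\texttt{a}_{S,p}$ can enforce it; as written, the paper's argument proves the lemma only under that extra hypothesis. Your free Young parameter $\delta$ removes exactly this defect: taking $\delta$ small transfers the whole burden onto the threshold $\texttt{a}_p^*\sim(1+\varepsilon)c^2/(\delta\,\underline{\rho})$, which is precisely the quantifier structure the statement demands (``there exists $\texttt{a}_p^*$ such that for all $\texttt{a}_{S,p}\ge\texttt{a}_p^*$\dots''). You are also more careful with the variable density: you bound the indefinite-sign face term in absolute value via $1/\rho\le1/\underline{\rho}$, whereas the paper's displayed chain replaces $1/\rho$ by $1/\overline{\rho}$ inside that same term, which is not legitimate for a term whose sign is unknown. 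Finally, your parenthetical reading of the DG norm (jumps over $\cF_h^*$ only) is the only one under which the lemma can hold at all, since $a_h^p$ penalizes no boundary faces.

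One small repair is needed: with $\texttt{a}_p^*:=(1+\varepsilon)c^2/(2\delta\,\underline{\rho})$ and $\texttt{a}_{S,p}=\texttt{a}_p^*$, your final jump coefficient $\texttt{a}_{S,p}-\texttt{a}_p^*$ vanishes, so $\alpha_p=0$, contradicting the claim that $\alpha_p>0$ for \emph{all} $\texttt{a}_{S,p}\ge\texttt{a}_p^*$. Define $\texttt{a}_p^*$ as twice the absorption threshold (equivalently, let the penalty absorb only half of the jump contribution from Young's inequality), so that the surviving penalty coefficient stays bounded below by $\texttt{a}_p^*/2>0$; everything else goes through verbatim.
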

\begin{proof}
Let $v_h\in\mathcal{V}_h^p$. Then, from the definition of $a_h^p(\cdot,\cdot)$ we have
\begin{multline*}
a_h^p( v_h, v_h)=c^2\int_{\O}\frac{1}{\rho(x)}\nabla_h v_h\cdot\nabla_h v_h+ \int_{\O}\frac{1}{\rho(x)}v_hv_h+\int_{\cF^*_h}\frac{\texttt{a}_{S,p}}{h_{\cF}}\jump{ v_h}\cdot\jump{v_h}\\
-\int_{\cF^*_h}\frac{1}{\rho(x)}\mean{c^2\nabla_h v_h}\cdot\jump{ v_h}
-\varepsilon\int_{\cF^*_h}\frac{1}{\rho(x)}\mean{c^2\nabla_h v_h}\cdot\jump{ v_h}\\
\geq \frac{c^2}{\overline{\rho}}\|\nabla_h v_h\|_{0,\Omega}^2+\frac{1}{\overline{\rho}}\|v_h\|_{0,\O}^2+\texttt{a}_{S,p}c^2\|h_{\cF}^{-1/2}\jump{ v_h}\|_{0,\cF_h^*}^2+\frac{c^2(1-\varepsilon)}{\overline{\rho}}\int_{\cF^*_h}\mean{\nabla_h v_h}\cdot\jump{ v_h}\\
\geq \frac{c^2}{\overline{\rho}}\|\nabla_h v_h\|_{0,\Omega}^2+\frac{1}{\overline{\rho}}\|v_h\|_{0,\O}^2+\texttt{a}_{S,p}c^2\|h_{\cF}^{-1/2}\jump{v_h}\|_{0,\cF_h^*}^2\\
+\frac{c^2(1-\varepsilon)}{2\overline{\rho}}(-\|h_e^{1/2}\mean{\nabla v_h }\|_{0,\cF_h^*}^2-\|h_{\cF}^{-1/2}\jump{v_h}\|_{0,\cF_h^*}^2)\\
\geq\underbrace{\min\left\{\frac{1}{\overline{\rho}}\left(c^2-C\left(\frac{1+\varepsilon}{2}\right) \right),\frac{1}{\overline{\rho}}\right\}}_{C_1}(\|\nabla_h v_h\|_{0,\O}^2+\|v_h\|_{0,\O}^2)\\
+c^2\underbrace{\left(\texttt{a}_{S,p}-\left(\frac{1+\varepsilon}{2\overline{\rho}}\right) \right)}_{C_2}\|h_{\cF}^{-1/2}\jump{v_h}\|_{0,\mathcal{F}_h^*}^2
\geq \alpha_p\| v_h\|_{DG}^2,
\end{multline*}
where the constant $C$ is the one provided by  \eqref{discTrace}. Observe that, since $\nu>0$, the ellipticity holds for $\nu> C(1+\varepsilon)/2$ and $\texttt{a}_{S,p}>(1+\varepsilon)/2\overline{\rho}$, with $\varepsilon\in\{-1,0,1\}$. Hence,   defining $\alpha_p:=\min\{C_1,C_2\}$ and choosing $\texttt{a}_S$ such that $\texttt{a}_{S,p}>\texttt{a}_p^*:=(1+\varepsilon)/2\overline{\rho}$ we  conclude the proof.
\end{proof}

Now,  the discrete solution operator  $T_h^{p,\varepsilon}: \H^1(\O)\rightarrow\mV_h^p$ is such that $T_h^{p,\varepsilon} f:=\widetilde{p}_h$ where  $\widetilde{p}_h$ is the solution of the source problem $a_h(\widetilde{p}_h,v_h)= b^p(f,v_h)$, for all $v_h\in\mV_h^p$. In this formulation,  $\varepsilon\in\{-1,0,1\}$ plays the same role as  in the displacement formulation, providing symmetric or nonsymmetric DG methods.

Let us remark that, despite to the fact that the solution operator $T^{p,\varepsilon}$ is continuous and compact, the classic theory of \cite{MR1115235} is not enough to our IPDG methods, since this theory requires that the numerical method must be conforming, implying that the noncompact theory of operators must be used for the pressure formulation.  Then, properties P1 and P2 in this context reads as follows
\begin{itemize}
\item P1. $\norm{T^p-T_h^{p,\varepsilon}}_{\mathcal{L}(\mathcal{V}^p_h, \mathcal{V}^p(h))}\to0$ as $h\to0$.
\item P2. $\forall\tau\in\mathcal{V}^p_h$, there holds
$$\inf_{\tau\in \mathcal{V}^p_h}\norm{\tau-\tau_h}_{\mathcal{V}^p(h)}\,\,\to0\quad\text{as}\quad h\to0.$$
\end{itemize}

The following convergence result holds for the continuous and discrete solution operators.
\begin{lemma}
\label{lmm:TTh_pressure}
For all $f\in \mathcal{V}^p$, the following estimate holds
$$
\|(T^p-T^{p,\varepsilon}_h)f\|_{\mathcal{V}^p(h)}\lesssim  h^s\|f\|_{0,\O},
$$
with $s\in(1/2,1]$  and the hidden constant is independent of $h$.
\end{lemma}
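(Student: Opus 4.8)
The plan is to prove the estimate by a standard consistency-plus-coercivity (Strang/C\'ea) argument, entirely analogous to the displacement analysis carried out above and to the corresponding estimate in \cite{MR4077220}, but now keeping track of the weight $1/\rho(x)$ and the sound speed $c$. Write $\tilde p:=T^pf$ and $\tilde p_h:=T^{p,\varepsilon}_hf$. The point of departure is the elliptic regularity of the weighted Neumann source problem: since $f\in\L^2(\O)$, the same bootstrap used in Lemma~\ref{lmm:invariant} for the displacement formulation yields $\tilde p\in\H^{1+s}(\O)$ with $s\in(1/2,1]$ and $\|\tilde p\|_{1+s,\O}\lesssim\|f\|_{0,\O}$; this is exactly the regularity recorded for the pressure eigenfunctions after \eqref{eq:pression}. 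In particular $s>1/2$ guarantees that the element-boundary traces of $\nabla\tilde p$ on the mesh skeleton are well defined.

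First I would establish consistency. Because $\tilde p\in\H^{1+s}(\O)\subset\H^1(\O)$ is single valued, $\jump{\tilde p}=0$ on every interior face, so the penalty term and the $\varepsilon$-term of $a_h^p(\tilde p,v_h)$ drop out. Integrating the volume term $c^2\int_{\O}\frac{1}{\rho}\nabla\tilde p\cdot\nabla_h v_h$ by parts element by element, invoking the strong form $-c^2\div(\frac{1}{\rho}\nabla\tilde p)+\frac{1}{\rho}\tilde p=\frac{1}{\rho}f$ together with the homogeneous Neumann condition on $\partial\O$, and using that the conormal flux $\frac{c^2}{\rho}\nabla\tilde p\cdot\bn$ is continuous across interior faces (so that the inter-element boundary terms reassemble into $\int_{\mathcal{F}_h^*}\mean{\frac{c^2}{\rho}\nabla\tilde p}\cdot\jump{v_h}$ and cancel the consistency term in $a_h^p$), I obtain $a_h^p(\tilde p,v_h)=b^p(f,v_h)$ for all $v_h\in\mathcal{V}_h^p$. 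Subtracting the discrete problem $a_h^p(\tilde p_h,v_h)=b^p(f,v_h)$ then gives the Galerkin orthogonality $a_h^p(\tilde p-\tilde p_h,v_h)=0$ for all $v_h\in\mathcal{V}_h^p$.

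Next I would close the estimate with coercivity and boundedness. Introduce the scalar analogue of the stronger norm used in the displacement case, $\|v\|_{DG}^{*}:=(\|v\|_{DG}^2+\|h_{\mathcal{F}}^{1/2}\mean{\nabla_h v}\|_{0,\mathcal{F}_h^*}^2)^{1/2}$, in which, exactly as before and using \eqref{discTrace}, one checks the boundedness $|a_h^p(w,v_h)|\lesssim\|w\|_{DG}^*\|v_h\|_{DG}$. Let $\tilde p_I\in\mathcal{V}_h^{p,c}$ be a conforming (Lagrange/Cl\'ement) interpolant of $\tilde p$ and set $e_h:=\tilde p_I-\tilde p_h\in\mathcal{V}_h^p$. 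The ellipticity of $a_h^p(\cdot,\cdot)$ just established, combined with the Galerkin orthogonality, gives $\alpha_p\|e_h\|_{DG}^2\le a_h^p(\tilde p_I-\tilde p,e_h)\lesssim\|\tilde p-\tilde p_I\|_{DG}^*\|e_h\|_{DG}$, whence $\|e_h\|_{DG}\lesssim\|\tilde p-\tilde p_I\|_{DG}^*$, and by the triangle inequality $\|\tilde p-\tilde p_h\|_{\mathcal{V}^p(h)}\le\|\tilde p-\tilde p_h\|_{DG}\lesssim\|\tilde p-\tilde p_I\|_{DG}^*$. Since both $\tilde p$ and $\tilde p_I$ are continuous, the jump contribution to $\|\tilde p-\tilde p_I\|_{DG}^*$ vanishes, and standard interpolation estimates for $\H^{1+s}$ functions together with trace inequalities (to bound the $h_{\mathcal{F}}^{1/2}\mean{\nabla_h(\cdot)}$ face term) yield $\|\tilde p-\tilde p_I\|_{DG}^*\lesssim h^{s}\|\tilde p\|_{1+s,\O}$. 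Chaining with $\|\tilde p\|_{1+s,\O}\lesssim\|f\|_{0,\O}$ completes the proof.

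I expect the main obstacle to be the consistency step: it rests on having $s>1/2$ so that the element-boundary traces of $\nabla\tilde p$ make sense, and on the continuity of the weighted conormal flux across interior faces. The latter requires that the mesh resolve the coefficient $\rho$ (or that $\rho$ be sufficiently regular) so that $\mean{\frac{c^2}{\rho}\nabla\tilde p}$ genuinely matches the consistency term appearing in $a_h^p(\cdot,\cdot)$. The boundedness of $a_h^p$ in the $\|\cdot\|_{DG}^*$/$\|\cdot\|_{DG}$ pairing and the elliptic regularity bound $\|\tilde p\|_{1+s,\O}\lesssim\|f\|_{0,\O}$ are the remaining ingredients, both of which are routine.
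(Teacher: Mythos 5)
Your proposal is correct and takes essentially the same approach as the paper: the paper's proof simply observes that $\|(T^p-T^{p,\varepsilon}_h)f\|_{\mathcal{V}^p(h)}$ is precisely the error of the DG method applied to the source problem and invokes the standard a priori estimate, which is exactly the regularity--consistency--coercivity--interpolation argument you spell out in detail. In other words, you have written out the standard DG source-problem error analysis that the paper's one-line proof delegates to the literature.
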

\begin{proof}
From the definition of the continuous and discrete solutions operators,  we have $\widehat{p}:=T^p f$ and $\widehat{p}_h:=T^{p,\varepsilon}_h f$. Now, since $\widehat{p}_h$ is the solution of the corresponding source problem, then $\|\widehat{p}-\widehat{p}_h\|_{\mathcal{V}^p(h)}$ is precisely the error of the DG method applied on the source problem. Then, the result follow with $s\in(1/2,1]$.
%
\end{proof}
If we consider   discrete sources, we obtain the analogous of the previous lemma, in the sense that 
For all $f_h\in \mathcal{V}_h$, the following estimate holds
$$
\|(T^p-T^{p,\varepsilon}_h)f_h\|_{\mathcal{V}^p(h)}\lesssim h^s\|f_h\|_{\mathcal{V}^p(h)},
$$
with $s\in(1/2,1]$  and the hidden constant is independent of $h$.
Now we are in position to establish P1.
\begin{lemma}
\label{lmm:P1_p}
There following estimate holds 
$$
\norm{T^p-T^{p,\varepsilon}_h}_{\mathcal{L}(\mathcal{V}_h,\mathcal{V}^p(h))}\lesssim h^s,
$$
where the hidden constant is independent of $h$.
\begin{proof}
Given $f_h\in\mathcal{V}^p_h$, we have
$$
\norm{T-T^{p,\varepsilon}_h}_{\mathcal{L}(\mathcal{V}^p_h, \mathcal{V}^p(h)}:=\sup_{0\neq f_h\in\mathcal{V}^p_h}
\frac{\norm{(T^p-T^{p,\varepsilon}_h)f_h}_{\mathcal{V}^p(h)}}{{\norm{f_h}_{\mathcal{V}^p(h)}}} \lesssim h^s.
$$
This concludes the proof.
\end{proof}
\end{lemma}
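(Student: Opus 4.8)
The plan is to reduce the operator-norm bound directly to the discrete-source estimate stated immediately above this lemma, so that essentially no new analysis is required beyond unwinding the definition of the norm on $\mathcal{L}(\mathcal{V}^p_h,\mathcal{V}^p(h))$. In other words, the genuine work has already been done; what remains is an assembly step.

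Concretely, I would begin by writing out the operator norm by definition,
$$
\norm{T^p-T^{p,\varepsilon}_h}_{\mathcal{L}(\mathcal{V}^p_h,\mathcal{V}^p(h))}=\sup_{0\neq f_h\in\mathcal{V}^p_h}\frac{\norm{(T^p-T^{p,\varepsilon}_h)f_h}_{\mathcal{V}^p(h)}}{\norm{f_h}_{\mathcal{V}^p(h)}}.
$$
Then, for each fixed $f_h\in\mathcal{V}^p_h$, I would invoke the discrete-source estimate preceding this lemma, namely
$$
\norm{(T^p-T^{p,\varepsilon}_h)f_h}_{\mathcal{V}^p(h)}\lesssim h^s\norm{f_h}_{\mathcal{V}^p(h)},
$$
whose hidden constant is independent of both $h$ and $f_h$. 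Dividing by $\norm{f_h}_{\mathcal{V}^p(h)}$ and taking the supremum over $0\neq f_h\in\mathcal{V}^p_h$ yields the claimed bound $\lesssim h^s$, which completes the argument.

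Where the real content sits — and hence the step I expect to be the main obstacle — is not in this final assembly but in the discrete-source estimate itself, which I would regard as the true engine of the proof. There one must upgrade Lemma~\ref{lmm:TTh_pressure} from continuous sources, where $T^p f=\widehat{p}\in\H^{1+s}(\O)$ inherits its regularity from the invariance property of Lemma~\ref{lmm:invariant}, to discrete sources $f_h\in\mathcal{V}^p_h$. The subtlety is that $f_h$ enters the source problem only through its $\L^2(\O)$-regularity, so one applies the DG source-problem error analysis to $T^p f_h$ and then controls $\norm{f_h}_{0,\O}$ by the stronger broken norm $\norm{f_h}_{\mathcal{V}^p(h)}$, exploiting that the latter dominates the $\L^2$-norm. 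Once this discrete-source bound is secured with a constant independent of $h$, the operator-norm estimate of the present lemma follows at once, and property P1 for the pressure formulation is established.
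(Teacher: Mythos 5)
Your proposal is correct and takes essentially the same route as the paper: the paper's proof likewise consists of unwinding the operator norm as a supremum over $0\neq f_h\in\mathcal{V}^p_h$ and invoking the discrete-source estimate $\norm{(T^p-T^{p,\varepsilon}_h)f_h}_{\mathcal{V}^p(h)}\lesssim h^s\norm{f_h}_{\mathcal{V}^p(h)}$ stated immediately before the lemma. Your closing observation that the genuine content sits in that discrete-source bound (the discrete analogue of Lemma~\ref{lmm:TTh_pressure}) also matches the paper's structure, which records that estimate separately and treats the present lemma as pure assembly.
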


If $\mathcal{E}(\mathcal{V}^p)$ represents the invariant space for the eigenfunctions of the pressure formulation, such that if $p\in\mathcal{E}(\mathcal{V}^p)$ then $p\in\H^{1+r}(\O)$ for $r>0$ and $\mathcal{E}(\mathcal{V}^p_h)$ is its discrete counterpart, we are able to  provide an error estimate for the eigenfunctions. To do this task, and taking into account that the pressure formulation is nothing else that the Laplacian operator, the arguments of \cite{MR2220929} can be easily followed for our purposes.
\begin{lemma}
\label{lmm:gap_gorro_ph}
For $h$ small enough it holds
$$
\widehat{\delta}_h(\mathcal{E}(\mathcal{V}^p),\mathcal{E}_h(\mathcal{V}^p_h))\lesssim h^{\min\{r,k\}},
$$
where the hidden constant is independent of $h$.
\end{lemma}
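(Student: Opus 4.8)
The plan is to follow the non-compact spectral approximation theory of \cite{MR483400,MR483401} in the scalar setting, exactly as was done for the displacement formulation, exploiting that problem \eqref{eq:pression} is a shifted Laplace eigenvalue problem so that the arguments of \cite{MR2220929} carry over. Two ingredients are already in place: property P1, established in Lemma~\ref{lmm:P1_p} with rate $h^s$, and property P2, the approximation property of $\mathcal{V}_h^p$ in $\mathcal{V}^p(h)$ as $h\to0$. Together these guarantee the qualitative spectral convergence (convergence of the eigenvalue cluster and of the associated invariant subspaces), so that it only remains to quantify the convergence of the eigenspaces measured in the gap.

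First I would introduce the spectral projectors $\mathcal{E}$ and $\mathcal{E}_h$ associated with $T^p$ and $T_h^{p,\varepsilon}$ through the Riesz--Dunford contour integrals around a disk isolating the target eigenvalue, as in the displacement case, and record (mirroring Lemmas~\ref{lot} and~\ref{conv}) the preliminary bound
\[
\widehat{\delta}_h(\mathcal{E}(\mathcal{V}^p),\mathcal{E}_h(\mathcal{V}_h^p))
\lesssim
\norm{T^p-T_h^{p,\varepsilon}}_{\mathcal{L}(\mathcal{V}_h^p,\mathcal{V}^p(h))}
+\delta(\mathcal{E}(\mathcal{V}^p),\mathcal{V}_h^p).
\]
This follows from the uniform boundedness of $\mathcal{E}_h$ and standard resolvent estimates together with P1.

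The decisive step is to sharpen this estimate into the scalar analogue of Theorem~\ref{hatgap}, namely
\[
\widehat{\delta}_h(\mathcal{E}(\mathcal{V}^p),\mathcal{E}_h(\mathcal{V}_h^p))
\lesssim
\delta^*(\mathcal{E}(\mathcal{V}^p),\mathcal{V}_h^p),
\]
where $\delta^*$ is measured in the starred DG norm adapted to the pressure, i.e.\ augmented by the consistency term $\|h_{\cF}^{1/2}\mean{\nabla_h\cdot}\|_{0,\cF_h^*}$. I expect this to be the main obstacle: the operator term in the preliminary bound carries only the rate $h^s$ coming from the generic regularity of discrete sources, whereas every eigenfunction enjoys the higher regularity $p\in\H^{1+r}(\O)$. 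Removing the $h^s$-limited contribution requires using the Galerkin consistency of $a_h^p(\cdot,\cdot)$ on smooth functions together with an Aubin--Nitsche duality argument on the source problem, so that the eigenfunction approximation is ultimately governed by the best-approximation distance alone, exactly as in \cite{MR2220929} and \cite[Theorem~6.1]{MR3962898}.

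Finally, I would conclude by estimating this best-approximation distance. Since every $p\in\mathcal{E}(\mathcal{V}^p)$ belongs to $\H^{1+r}(\O)$, standard polynomial approximation in the DG norm, which controls a broken $\H^1$-type seminorm plus the jump and average terms, yields
\[
\delta^*(\mathcal{E}(\mathcal{V}^p),\mathcal{V}_h^p)\lesssim h^{\min\{r,k\}},
\]
with the hidden constant independent of $h$; combining this with the sharpened gap bound gives the desired estimate. The only points requiring care are the scalar adaptations of the trace inequality \eqref{discTrace} and of the interpolation estimates already employed for the displacement formulation, which are routine for piecewise polynomials of degree $k$.
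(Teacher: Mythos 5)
Your proposal is correct and follows essentially the same route as the paper: the paper gives no detailed proof, instead stating that since the pressure formulation is just the (shifted) Laplace eigenproblem, the arguments of \cite{MR2220929} carry over, and those arguments are precisely what you spell out — the non-compact theory of \cite{MR483400,MR483401} with P1 (Lemma~\ref{lmm:P1_p}) and P2, spectral projectors and the gap bound mirroring Lemmas~\ref{lot} and~\ref{conv}, the sharpening to the best-approximation distance as in Theorem~\ref{hatgap}, and the final $h^{\min\{r,k\}}$ rate from the $\H^{1+r}(\O)$ regularity of the eigenfunctions. Your sketch is in fact more explicit than the paper's own justification.
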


Finally we present error estimates for the eigenvalues 
for the  symmetric and nonsymmetric methods, where optimal and suboptimal order of convergence are attained, respectively.
\begin{theorem}
	There exists a strictly positive constant $h_0$ such that, for $h<h_0$ there holds
	\begin{enumerate}
	\item If the symmetric IPDG method is considered $(\varepsilon=1)$, then there holds
$$
		|\lambda-\lambda_h|\lesssim  h^{2\min\{r,k\}},
$$
	\item If any of the nonsymmetric IPDG methods are  considered $(\varepsilon\in\{-1,0\})$, then there holds
$$
		|\lambda-\lambda_h|\lesssim h^{\min\{r,k\}},
$$
	\end{enumerate}
	where in each estimate the hidden constant is independent of $h$.
\end{theorem}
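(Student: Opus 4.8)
The plan is to mirror the abstract spectral-approximation argument already used for the displacement formulation in Theorem~\ref{errorE}, specializing it to the scalar pressure setting. Since problem~\eqref{eq:pression} is selfadjoint and the DG discretization~\eqref{eq:pression_discreto} is essentially the interior-penalty Laplacian analyzed in~\cite{MR2220929}, both estimates will follow from a single algebraic identity for the eigenvalue error, where the symmetric and nonsymmetric cases are separated by the adjoint-consistency properties of $a_h^p(\cdot,\cdot)$. First I would fix a continuous eigenpair $(\lambda,p)$ with $p\in\mathcal{E}(\mathcal{V}^p)$ normalized so that $b^p(p,p)=1$, and the discrete eigenpair $(\lambda_h,p_h)$ lying in $D_\lambda$, aligning the two through the spectral projectors so that $\|p-p_h\|_{DG}$ is controlled by the gap. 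I would also record the two structural facts I will use: because $p\in\H^{1}(\O)$ its jumps vanish, so $a_h^p(p,p)=a^p(p,p)=\lambda$; and \emph{consistency} gives $a_h^p(p,v_h)=\lambda\,b^p(p,v_h)$ for all $v_h\in\mathcal{V}_h^p$.

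Next, starting from $a_h^p(p_h,p_h)=\lambda_h\,b^p(p_h,p_h)$ and expanding both arguments around $p$, the two facts above make the first-order terms in $p$ cancel, leaving the identity
$$
(\lambda_h-\lambda)\,b^p(p_h,p_h)=\big[a_h^p(p_h-p,p_h-p)-\lambda\,b^p(p_h-p,p_h-p)\big]+R_h,\qquad R_h:=a_h^p(p_h-p,p)-\lambda\,b^p(p_h-p,p).
$$
The denominator $b^p(p_h,p_h)$ is bounded below away from zero for $h$ small by the eigenfunction convergence. The bracketed numerator is quadratic: both terms are dominated by $\|p-p_h\|_{DG}^2$, which by Lemma~\ref{lmm:gap_gorro_ph} is $\lesssim h^{2\min\{r,k\}}$. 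Here the regularity $p\in\H^{1+r}(\O)$ needed to bound the face contributions is the one inherited, via the first equation of~\eqref{def:acustica}, from Lemma~\ref{lmm:invariant}.

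The whole distinction between the two cases lives in the residual $R_h$, which is precisely the \emph{adjoint}-consistency error. For the symmetric method $(\varepsilon=1)$ the form $a_h^p(\cdot,\cdot)$ is symmetric, so $a_h^p(p_h-p,p)=a_h^p(p,p_h-p)=\lambda\,b^p(p,p_h-p)=\lambda\,b^p(p_h-p,p)$ by consistency, whence $R_h=0$ and only the quadratic term survives, giving $|\lambda-\lambda_h|\lesssim h^{2\min\{r,k\}}$. For the incomplete and nonsymmetric choices $(\varepsilon\in\{-1,0\})$ adjoint consistency fails: the penalty face terms weighted by $(1-\varepsilon)$ no longer cancel, so $R_h$ retains a genuinely first-order contribution of order $h^{\min\{r,k\}}$, which dominates the quadratic term and yields the suboptimal estimate.

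The main obstacle is the honest bookkeeping of $R_h$. Concretely, for $\varepsilon=1$ one must confirm that the cancellation is exact (no hidden consistency remainder from the nonconforming face integrals when the smooth exact eigenfunction is inserted), and for $\varepsilon\in\{-1,0\}$ one must verify that the surviving nonsymmetric face term is indeed controlled by $\|h_{\cF}^{1/2}\mean{\nabla_h(p_h-p)}\|_{0,\cF_h^*}\lesssim h^{\min\{r,k\}}$ using the discrete trace inequality~\eqref{discTrace} and the approximation of $p\in\H^{1+r}(\O)$, and that this bound is sharp. Once $R_h$ is controlled in each case, combining the estimates on the numerator, the denominator and $R_h$ closes the argument exactly as in~\cite[Theorem~4.4]{MR4077220}.
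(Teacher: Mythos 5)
Your proof is correct and takes essentially the same route the paper intends: the theorem is stated there without a written proof, deferring to the arguments of \cite{MR2220929} (cf.\ also \cite[Theorem 4.4]{MR4077220}), which are precisely the eigenvalue-error identity plus the adjoint-consistency dichotomy you spell out. One cosmetic remark: the residual that survives for $\varepsilon\in\{-1,0\}$ is $(1-\varepsilon)\int_{\cF_h^*}\rho^{-1}c^{2}\mean{\nabla p}\cdot\jump{p_h-p}$ --- the exact gradient mean paired with the discrete jump, rather than $\mean{\nabla_h(p_h-p)}$ --- but it is bounded exactly as you indicate, by Cauchy--Schwarz with weights $h_{\cF}^{\pm 1/2}$, the regularity $p\in\H^{1+r}(\O)$, and Lemma~\ref{lmm:gap_gorro_ph}.
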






\section{Numerical experiments}
\label{sec:numerics}
Now we report a series of  numerical experiments in order to corroborate the robustness of the proposed method with $\varepsilon \in \{ - 1, 0, 1\}$. All the results have been obtained with a FEniCS script \cite{MR3618064}. The computed  order of convergence for the eigenvalues were obtained by means of a standard least square fitting. When no analytical solution is available, we compare the computed eigenvalues for each mesh with extrapolated values.
Let us denote by $N$ the refinement level of the mesh and $\lambda$ the eigenvalues.  For the experiments we consider three cases. In the first case, we study the influence of the stabilization parameter $\texttt{a}$ when the spectrum is computed, in order to observe the arising of spurious modes. This is an important analysis since, according to Lemma \ref{discElip}, the method is stable for a certain threshold of $\texttt{a}$. Let us remark that this threshold changes when the configuration of the method changes, namely, physical constants, geometry of the domain, boundary conditions, etc.
 The second situation studies the convergence of the methods, where for $\varepsilon=1$ the optimal order is expected, whereas for $\varepsilon\in\{-1,0\}$ suboptimal order of convergence must be attained according to, for instance,  \cite{MR2324460, MR4077220}.  Finally, we consider domains with singularities and non-polygonal domains to observe the behavior of the method in these geometries.

Throughout this section, the stabilization parameter will be chosen proportional to the square of the polynomial degree $k$ as $\texttt{a}_S=\texttt{a}k^2$, with $\texttt{a}>0$. We begin our report of numerical results considering first the discrete  displacement formulation \eqref{spectdisc}. Then, we will study the pure pressure numerical scheme \eqref{eq:pression_discreto}.

From the results reported in \cite{MR3962898, MR4077220} for the elasticity and Stokes equations, respectively it follows that for $\texttt{a}<1$, the spurious eigenvalues appear more strongly compared with those computed when $\texttt{a}>1$. Therefore, considering that the coercivity constant depends on the density, in the spurious analysis experiments, we will present different combinations of stabilization parameters that depend on the lower and upper bounds of the density.

\subsection{Test 1: Influence of the stabilization}
 In this experiment we test the effect of the stabilization parameter on each discontinuous scheme, namely SIP ($\varepsilon=1$), IIP ($\varepsilon=0$) and NIP ($\varepsilon=-1$). The importance of this study lies in determining which is the safest parameter to avoid pollution on the  spectrum. To detect spurious modes, we compare eigenvalues and eigenfunctions. 

 The domain is the rectangle $\Omega:=(0,a)\times(0,b)$.  We choose $\rho_1(x,y)=\frac{1}{x^2+y^2+1}$ and $\rho_2(x,y)=e^{xy+1}$, which correspond to  density functions such that $|\rho_1(x,y)|\leq 1$ and $|\rho_2(x,y)|>1$, respectively. For the experiment, we choose $a=1$ and $b=1.1$, whereas $c=1$.
 
 \subsubsection{SIP method} In this test, we take  $\varepsilon=1$ in \eqref{ah}. The first goal is to study the influence of the stabilization parameter  $\texttt{a}_S$  on the computation of the spectrum. More precisely, the intention is to analyze the appearance of spurious eigenvalues for certain values and localize a threshold in which the method is safe to compute the eigenvalues with physical meaning.  In the forthcoming results, we report the computed eigenvalues fixing the refinement parameter $N$ and taking different values of $\texttt{a}$, and polynomial degrees. The numbers inside boxes represent spurious eigenvalues.
 \begin{table}[!h]
 	\centering
 	{\setlength{\tabcolsep}{3.8pt}\footnotesize
 		\caption{Test 1. Computed eigenvalues $\lambda_{h,i}$ for polynomial degrees $k=1,2,3$, mesh level $N=8$ and different values of $\texttt{a}$ and $\rho(x)$, for the SIP method ($\varepsilon=1$).}
 		\begin{tabular}{c|cccc|cccc}
 			&\multicolumn{4}{c}{$\rho_1(x,y)$}&\multicolumn{4}{c}{$\rho_2(x,y)$}\\
 			\toprule
 			\diagbox{k}{a} &$2(\overline{\rho}+\underline{\rho})$ & $4\overline{\rho}$    & $4(\overline{\rho}+\underline{\rho})$    & $8\overline{\rho}$      & $2(\overline{\rho}+\underline{\rho})$ & $4\overline{\rho}$       & $4(\overline{\rho}+\underline{\rho})$       & $8\overline{\rho}$            \\\midrule
 			\multirow{10}{0.2cm}{1}
 				& 7.83005 & 7.85010 & 7.86139 & 7.87395 & 7.92006 & 7.93951 & 7.94676 & 7.95428 \\
 			& 9.58484 & 9.28119 & 9.64013 & 9.65956 & 10.29239 & 10.32611 & 10.33702 & 10.35267 \\
 			& 17.38317 & 10.02391 & 17.53053 & 17.60191 & 18.16105 & 18.28682 & 18.326917 & 18.36967 \\
 			& \fbox{26.83626} & 17.65229 & 32.78925 & 33.04550 & 32.91697 & 33.22771 & 33.41198 & 33.58185 \\
 			& 32.20629 & 32.68393 & 39.74527 & 40.03675 & 39.80118 & 40.23012 & 40.43040 & 40.62000 \\
 			& 39.05536 & 39.66805 & 42.43583 & 42.88156 & \fbox{41.68459} & 43.10124 & 43.38814 & 43.63388 \\
 			& 41.58004 & 42.41955 & 47.72058 & 48.27411 & 42.68574 & 48.74642 & 49.15382 & 49.52661 \\
 			& 46.71721 & 47.77799 & 72.84119 & 74.20859 & 48.66419 & 73.83369 & 74.61207 & 75.31965 \\
 			& \fbox{49.80943} & 72.54681 & 75.13436 & 76.53154 & 72.59414 & 75.71586 & 76.64642 & 77.55341 \\
 			& 70.17544 & 73.49931 & 84.38202 & 86.64709 & 74.59481 & 85.64420 & 87.01155 & 88.42254 \\
 			\midrule
 			\multirow{10}{0.2cm}{2}
 			& 7.83258 & 7.83261 & 7.83262 & 7.83263 & 7.91306 & 7.913092 & 7.913102 & 7.91312 \\
 			& 9.59294 & 9.59299 & 9.59300 & 9.59302 & 10.27004 & 10.27012 & 10.27016 & 10.27018 \\
 			& 17.42091 & 17.42146 & 17.42162 & 17.42179 & 18.16579 & 18.16631 & 18.16649 & 18.16666 \\
 			& 32.34693 & 32.34999 & 32.35098 & 32.35200 & 32.75789 & 32.76038 & 32.76132 & 32.76219 \\
 			& 39.15838 & 39.16262 & 39.16390 & 39.16518 & 39.60587 & 39.60917 & 39.61035 & 39.61145 \\
 			& 41.87547 & 41.88456 & 41.88704 & 41.88956 & 42.53659 & 42.54257 & 42.54459 & 42.54646 \\
 			& 47.03714 & 47.04933 & 47.05232 & 47.05533 & 47.98522 & 47.99580 & 47.99878 & 48.00148 \\
 			& 71.48122 & 71.53121 & 71.54302 & 71.55498 & 72.23229 & 72.26248 & 72.27162 & 72.28008 \\
 			& 73.16055 & 73.19808 & 73.20927 & 73.22068 & 73.57347 & 73.59884 & 73.60805 & 73.61669 \\
 			& 82.66712 & 82.75860 & 82.77748 & 82.79638 & 83.59049 & 83.65245 & 83.66898 & 83.68400 \\	
 			\midrule
 			\multirow{10}{0.2cm}{3}
			& 7.83253 & 7.83253 & 7.83253 & 7.83253 & 7.91295 & 7.91295 & 7.91295 & 7.91295 \\
			& 9.59288 & 9.59288 & 9.59288 & 9.59288 & 10.26991 & 10.26991 & 10.26991 & 10.26992 \\
			& 17.42060 & 17.42060 & 17.42060 & 17.42060 & 18.16508 & 18.16508 & 18.16508 & 18.16509 \\
			& 32.34437 & 32.34437 & 32.34437 & 32.34438 & 32.75262 & 32.75263 & 32.75263 & 32.75264 \\
			& 39.15584 & 39.15584 & 9.155855 & 39.15585 & 39.59998 & 39.59998 & 39.59998 & 39.59998 \\
			& 41.87258 & 41.87260 & 41.87261 & 41.87262 & 42.52780 & 42.52781 & 42.52782 & 42.52784 \\
			& 47.03552 & 47.03554 & 47.03555 & 47.03556 & 47.97646 & 47.97648 & 47.97649 & 47.97650 \\
			& 71.47553 & 71.47572 & 71.47580 & 71.47587 & 72.19433 & 72.19446 & 72.19452 & 72.19457 \\
			& 73.13334 & 73.13345 & 73.13350 & 73.13354 & 73.51695 & 73.51703 & 73.51707 & 73.51710 \\
			& 82.66821 & 82.66852 & 82.66863 & 82.66875 & 83.53586 & 83.53609 & 83.53618 & 83.53628 \\
 			\bottomrule
 	\end{tabular}}
 	\label{tabla:square-SIP-k123}
 \end{table}

 \begin{table}[!h]
 	\centering
 	{\setlength{\tabcolsep}{3.8pt}\footnotesize
 		\caption{Test 1. Computed eigenvalues $\lambda_{h,i}$ for polynomial degrees $k=1,2,3$, mesh level $N=8$ and different values of $\texttt{a}$ and $\rho(x)$, for the SIP method ($\varepsilon=1$).}
 		\begin{tabular}{c|cccc|cccc}
 			&\multicolumn{4}{c}{$\rho_1(x,y)$}&\multicolumn{4}{c}{$\rho_2(x,y)$}\\
 			\toprule
 			\diagbox{k}{a} &4 & $4(\overline{\rho}+1)$    & 8    & $8(\overline{\rho}+1)$      & 4 & $4(\overline{\rho}+1)$       & 8       & $8(\overline{\rho}+1)$            \\\midrule
 			\multirow{10}{0.2cm}{1}
 			&7.85010&7.87395 & 7.87395 & 7.88547 & \fbox{4.94719}& 7.94189 & 7.85203 & 7.95588 \\
 			&9.28119&9.65956 & 9.65956 & 9.67768 & 7.73864 & 10.32434 & 10.15531 & 10.35597 \\
 			&\fbox{10.02391}&17.60191 & 17.60191 & 17.66392 & 9.91112& 18.29696 & 17.80452 & 18.37871 \\
 			&17.65229&33.04550 & 33.04550 & 33.27389 & \fbox{15.58674} & 33.31167 & \fbox{29.18053} & 33.61829 \\
 			&32.68393&40.03675 & 40.03675 & 40.29402 & 17.28392 & 40.31934 & 32.13420 & 40.66076 \\
 			&39.66805&42.88156 & 42.88156 & 43.24050 & \fbox{27.92958} & 43.21682 & 37.94840 & 43.68507 \\
 			&42.41955&48.27411 & 48.27411 & 48.73874 &28.51991 & 48.86380 & 40.57725 & 49.60571 \\
 			&47.77799&74.20859 & 74.20859 & 75.17274 & \fbox{33.30603}& \fbox{59.44931} & 44.65770 & 75.46235 \\
 			&72.54681&76.53154 & 76.53154 & 77.73133 & 34.96146& 74.52412 & \fbox{56.35742} & 77.74420 \\
 			&73.49931&86.64709 & 86.64709 & 88.35366 & 36.65087 & 76.40702 & 65.81340 & 88.70701 \\
 			\midrule
 			\multirow{10}{0.2cm}{2}
 			&7.83261&7.83263 & 7.83263 & 7.83264 & 7.91487 & 7.91309 & 7.91191 & 7.91312 \\
 			&9.59299&9.59302 & 9.59302 & 9.59304 & 10.27009 & 10.27014 & \fbox{8.87486} & 10.27019 \\
 			&17.42146&17.42179 & 17.42179 & 17.42194 & 18.15392 & 18.16639 & 10.26904 & 18.16669 \\
 			&32.34999&32.35200 & 32.35200 & 32.35292 & \fbox{26.39878} & 32.76080 & \fbox{13.21076} & 32.76238 \\
 			&39.16262&39.16518 & 39.16518 & 39.16633 & 32.75376 & 39.60970 & \fbox{17.28802} & 39.61168 \\
 			&41.88456&41.88956 & 41.88956 & 41.89181 & 39.61667 & 42.54348 & 18.17024 & 42.54685 \\
 			&47.04933&47.05533 & 47.05533 & 47.05800 & 42.52010 & 47.99715 & 32.75270 & 48.00205 \\
 			&71.53121&71.55498 & 71.55498 & 71.56563 & 47.97542 & 72.26659 & 39.60443 & 72.28185 \\
 			&73.19808&73.22068 & 73.22068 & 73.23088 & \fbox{53.28165} & 73.60296 & 42.53533 & 73.61851 \\
 			&82.75860&82.79638 & 82.79638 & 82.81314 & 72.17575 & 83.65992 & 47.97276 & 83.68714 \\			
 			\midrule
 			\multirow{10}{0.2cm}{3}
 			&7.83253&7.83253 & 7.83253 & 7.83253 & 7.91295 & 7.91295   & 7.91295   & 7.91295 \\
 			&9.59288&9.59288  & 9.59288  & 9.59288  & 10.26991 & 10.26991  & 10.26991  & 10.26991  \\
 			&17.42060&17.42060 & 17.42060 & 17.42060 & 18.16508 & 18.16508  & 18.16508  & 18.16508  \\
 			&32.34437&32.34438 & 32.34438 & 32.34438 & \fbox{31.51368} & 32.75263  & 32.75260  & 32.75263  \\
 			&39.15584&39.15585 & 39.15585 & 39.15585 & 32.75255 & 39.59998  & 39.59995  & 39.59998  \\
 			&41.87260&41.87262 & 41.87262 & 41.87263 & 39.59988 & 42.52782  & 42.52768  & 42.52783  \\
 			&47.03554&47.03556 & 47.03556 & 47.03557 & 42.52740 & 47.97649  & 47.97635  & 47.97651  \\
 			&71.47572&71.47587 & 71.47587 & 71.47594 & 47.97424 & 72.19448  & 72.19350  & 72.19458  \\
 			&73.13345&73.13354 & 73.13354 & 73.13359 & \fbox{50.98591} & 73.51705  & 73.51650  & 73.51711  \\
 			&82.66852&82.66875 & 82.66875 & 82.66885 & 72.19381 & 83.53613  & 83.53511  & 83.53630  \\
 			\bottomrule
 	\end{tabular}}
 	\label{tabla:square-SIP-k123_2}
 \end{table}
\begin{table}[!h]
	{\footnotesize
		\begin{center}
			\caption{Test 1. Convergence analysis for $\texttt{a}=10(\overline{\rho}+1)$, $\rho(x,y)=\frac{1}{x^2+y^2+1}$, and $k=1,2,3$ for the SIP method ($\varepsilon=1$). }
			\begin{tabular}{c |c c c c |c| c}
				\toprule
				k        & $N=10$             &  $N=20$         &   $N=30$         & $N=40$ & Order & $\lambda_{i}$ \\ 
				\midrule\multirow{4}{0.2cm}{1}
				&7.86806 & 7.84149 & 7.83652 & 7.83478 & 1.98 & 7.83248 \\
				&9.64947 & 9.60705 & 9.59919 & 9.59643 & 2.00 & 9.59290 \\
				&17.58538 & 17.46235 & 17.43923 & 17.43109 & 1.98 & 17.42052 \\
				&32.96979 & 32.50146 & 32.41428 & 32.38371 & 1.99 & 32.34399 \\
				\midrule
				\multirow{4}{0.2cm}{2} 
				&7.83258 & 7.83253 & 7.83253 & 7.83253 & 3.99 & 7.83253 \\
				&9.59295 & 9.59289 & 9.59288 & 9.59288 & 3.99 & 9.59288 \\
				&17.42116 & 17.42063 & 17.42060 & 17.42060 & 3.98 & 17.42060 \\
				&32.34797 & 32.34457 & 32.34438 & 32.34435 & 3.97 & 32.34434 \\
				\midrule
				\multirow{4}{0.2cm}{3}   
				&7.83253 & 7.83253 & 7.83253 & 7.83253 & 5.85 & 7.83253 \\
				&9.59288 & 9.59288 & 9.59288 & 9.59288 & 5.88 & 9.59288 \\
				&17.42060 & 17.42060 & 17.42060 & 17.42060 & 5.98 & 17.42060 \\
				&32.34435 & 32.34434 & 32.34434 & 32.34434 & 5.99 & 32.34434 \\				
				\bottomrule             
			\end{tabular}
	\end{center}}
	\label{tabla:square-SIP-convergencia}
\end{table}
\begin{table}[!h]
	{\footnotesize
		\begin{center}
			\caption{Test 1. Convergence analysis for $\texttt{a}=10(\overline{\rho}+1)$, $\rho(x,y)=e^{xy+1}$, and $k=1,2,3$ for the SIP method ($\varepsilon=1$). }
			\begin{tabular}{c |c c c c |c| c}
				\toprule
				k        & $N=10$             &  $N=20$         &   $N=30$         & $N=40$ & Order & $\lambda_{i}$ \\ 
				\midrule\multirow{4}{0.2cm}{1}
					& 7.94234 & 7.92041 & 7.91628 & 7.91483 & 1.97 & 7.91291 \\
				& 10.32857 & 10.28466 & 10.27648 & 10.27361 & 1.99 & 10.26989 \\
				& 18.31292 & 18.20280 & 18.18194 & 18.17459 & 1.96 & 18.16476 \\
				& 33.34798 & 32.90287 & 32.81956 & 32.79031 & 1.98 & 32.75189 \\
				\midrule
				\multirow{4}{0.2cm}{2} 
					& 7.91302 & 7.91296 & 7.912956 & 7.912956 & 3.99 & 7.91295 \\
				& 10.27003 & 10.26992 & 10.26991 & 10.26991 & 3.99 & 10.26991 \\
				& 18.16577 & 18.16512 & 18.16508 & 18.16508 & 3.98 & 18.16507 \\
				& 32.75676 & 32.75285 & 32.75264 & 32.75260 & 3.97 & 32.75258 \\
				\midrule
				\multirow{4}{0.2cm}{3}   
					& 7.91295 & 7.91295 & 7.91295 & 7.91295 & 5.92 & 7.91295 \\
				& 10.26991 & 10.26991 & 10.26991 & 10.26991 & 5.95 & 10.26991 \\
				& 18.16508 & 18.16507 & 18.16507 & 18.16507 & 5.98 & 18.16507 \\
				& 32.75260 & 32.75258 & 32.75258 & 32.75258 & 5.98 & 32.75258 \\
				\bottomrule             
			\end{tabular}
	\end{center}}
	\label{tabla:square-SIP-convergencia2}
\end{table}
In Table \ref{tabla:square-SIP-k123} we note that spurious eigenvalues appear in the lowest order approximation for $\rho_1$ since $2(\overline{\rho}+\underline{\rho})<4$, which is expected from the results in \cite{MR4077220}. However, for $\rho_2$ we have a different scenario since $2(\overline{\rho}+\underline{\rho})\approx 21.7689$, but a spurious eigenvalue appears in the spectrum. Also, for this configuration of the problem, when the polynomial degree is increased, the appearance of spurious eigenvalues is less frequent. This fact allows us to infer that when the polynomial approximation increases, the spurious eigenvalues begin to vanish, at least when the lowest frequencies are considered.

From the above, one may be tempted to infer that the safe parameter for the SIP scheme is $\texttt{a}\overline{\rho}$, for $\texttt{a}\geq 4$. However, in Table \ref{tabla:square-SIP-k123_2} we observe an alternative spurious analysis, where we consider, in particular, stabilization parameters of the references \cite{MR3962898,MR4077220}, together with a particular combination of the form $\texttt{a}(\overline{\rho}+1)$. Note the remarkable difference in the cleanliness of the spectrum between the schemes with $\rho_1$ and $\rho_2$. In particular, the scheme with $\rho_2$ and $4(\overline{\rho}+1)$, whose stabilized is larger compared to $4\overline{\rho}$, presented in the previous table, gives a spurious eigenvalue for $k=1$. Moreover, we note that the safe parameters in the references are not sufficient if $|\rho(x,y)|>1$. Therefore, we infer that the safe parameter choice for the SIP with variable density lies between the combinations $\texttt{a}(\overline{\rho}+\underline{\rho})/2$ and $\texttt{a}(\overline{\rho}+1)$, for $\texttt{a}\geq 8$, at least for the cases under study.  

On the other hand, Tables \ref{tabla:square-SIP-convergencia}--\ref{tabla:square-SIP-convergencia2} reveal  that with the SIP method the optimal order of convergence is attained. More precisely, the order is exactly $\mathcal{O}(h^{2k})$. This order is obtained since the domain in which we state the problem is convex, leading to sufficiently smooth eigenfunctions for our spectral problem.
\subsubsection{NIP and IIP method} Now our aim is to study computationally the presence of spurious eigenvalues for the nonsymmetric methods, more precisely the NIP ($\varepsilon=-1$) and SIP ($\varepsilon=0$). The combination of stabilization parameters are the same as those of SIP. In Table \ref{tabla:square-NIP-k123} we present the computed eigenvalues for different stabilizations and polynomial degrees when the NIP method is considered.
\begin{table}[!h]
	\centering
	{\setlength{\tabcolsep}{3.8pt}\footnotesize
		\caption{Test 1. Computed eigenvalues $\lambda_{h,i}$ for polynomial degrees $k=1,2,3$, mesh level $N=8$ and different values of $\texttt{a}$ and $\rho(x)$, for the NIP ($\varepsilon=-1$) method.}
		\begin{tabular}{c|cccc|cccc}
			&\multicolumn{4}{c}{$\rho_1(x,y)$}&\multicolumn{4}{c}{$\rho_2(x,y)$}\\
			\toprule
			\diagbox{k}{a} &$\overline{\rho}+\underline{\rho}$ & $2\overline{\rho}$    & $2(\overline{\rho}+\underline{\rho})$    & $4\overline{\rho}$      & $\overline{\rho}+\underline{\rho}$ & $2\overline{\rho}$       & $2(\overline{\rho}+\underline{\rho})$       & $4\overline{\rho}$            \\\midrule
			\multirow{10}{0.2cm}{1}
				& 7.74084 & 7.79239 & 7.81612 & 7.84287 & 7.89912 & 7.92612 & 7.93896 & 7.95100 \\
			& 9.42728 & 9.50518 & 9.54304 & 9.58812 & 10.16304 & 10.22912 & 10.26408 & 10.30103 \\
			& 16.84491 & 17.11841 & 17.24596 & 17.39284 & 17.90370 & 18.07698 & 18.16537 & 18.25622 \\
			& 30.37398 & 31.33856 & 31.79499 & 32.32444 & 31.60803 & 32.30451 & 32.66855 & 33.05071 \\
			& 36.67343 & 37.91183 & 38.46944 & 39.10970 & 38.24068 & 39.06090 & 39.49054 & 39.94433 \\
			& 38.21148 & 39.99929 & 40.82734 & 41.74630 & 40.64388 & 41.81255 & 42.38055 & 42.94331 \\
			& 43.61388 & 45.13923 & 45.90425 & 46.84143 & 45.58253 & 46.85679 & 47.56141 & 48.33365 \\
			& 59.21923 & 65.54764 & 68.22675 & 70.98606 & 65.44416 & 69.58263 & 71.41674 & 73.13639 \\
			& 63.44700 & 68.01181 & 70.23721 & 72.89916 & 67.67393 & 71.02756 & 72.85119 & 74.81097 \\
			& 70.50419 & 75.99282 & 78.72432 & 82.06519 & 75.21822 & 79.64399 & 82.07951 & 84.72267 \\
			\midrule
			\multirow{10}{0.2cm}{2}
			& 7.84646 & 7.84285 & 7.84091 & 7.83847 & 7.92746 & 7.92356 & 7.92134 & 7.91889 \\
			& 9.61147 & 9.60681 & 9.60426 & 9.60101 & 10.29693 & 10.29000 & 10.28596 & 10.28140 \\
			& 17.49628 & 17.47764 & 17.46744 & 17.45441 & 18.24717 & 18.22656 & 18.21449 & 18.20084 \\
			& 32.64374 & 32.56841 & 32.52784 & 32.47677 & 33.02521 & 32.95571 & 32.91572 & 32.87114 \\
			& 39.53666 & 39.44163 & 39.39020 & 39.32520 & 39.94652 & 39.85876 & 39.80801 & 39.75122 \\
			& 42.36415 & 42.24515 & 42.18022 & 42.09761 & 42.95950 & 42.85318 & 42.79122 & 42.72144 \\
			& 47.62310 & 47.48150 & 47.40405 & 47.30527 & 48.54451 & 48.40647 & 48.32566 & 48.23419 \\
			& 72.84790 & 72.52868 & 72.35369 & 72.13023 & 73.37488 & 73.09611 & 72.93325 & 72.74945 \\
			& 74.68225 & 74.30735 & 74.10584 & 73.85231 & 74.84721 & 74.52393 & 74.33815 & 74.13114 \\
			& 84.65230 & 84.19038 & 83.93873 & 83.61867 & 85.32188 & 84.90646 & 84.66393 & 84.39006 \\	
			\midrule
			\multirow{10}{0.2cm}{3}
				& 7.83256 & 7.83255 & 7.83255 & 7.83254 & 7.91298 & 7.91297 & 7.91296 & 7.91296 \\
			& 9.59293 & 9.59291 & 9.59291 & 9.59290 & 10.26997 & 10.26995 & 10.26994 & 10.26993 \\
			& 17.42105 & 17.42093 & 17.42087 & 17.42079 & 18.16550 & 18.16538 & 18.16531 & 18.16524 \\
			& 32.34726 & 32.34647 & 32.34607 & 32.34556 & 32.75509 & 32.75440 & 32.75402 & 32.75360 \\
			& 39.15956 & 39.15856 & 39.15803 & 39.15738 & 39.60313 & 39.60225 & 39.60176 & 39.60122 \\
			& 41.87933 & 41.87756 & 41.87662 & 41.87545 & 42.53310 & 42.53165 & 42.53083 & 42.52994 \\
			& 47.04380 & 47.04165 & 47.04051 & 47.03908 & 47.98422 & 47.98215 & 47.98097 & 47.97967 \\
			& 71.50754 & 71.49928 & 71.49490 & 71.48939 & 72.21964 & 72.21292 & 72.20910 & 72.20487 \\
			& 73.16741 & 73.15827 & 73.15349 & 73.14759 & 73.54462 & 73.53705 & 73.53280 & 73.52813 \\
			& 82.72109 & 82.70733 & 82.70006 & 82.69098 & 83.58124 & 83.56921 & 83.56237 & 83.55478 \\
			\bottomrule
	\end{tabular}}
	\label{tabla:square-NIP-k123}
\end{table}
\begin{table}[!h]
	\centering
	{\setlength{\tabcolsep}{3.8pt}\footnotesize
		\caption{Test 1. Computed eigenvalues $\lambda_{h,i}$ for polynomial degrees $k=1,2,3$, mesh level $N=8$ and different values of $\texttt{a}$ and $\rho(x)$, for the NIP ($\varepsilon=-1$) method.}
		\begin{tabular}{c|cccc|cccc}
			&\multicolumn{4}{c}{$\rho_1(x,y)$}&\multicolumn{4}{c}{$\rho_2(x,y)$}\\
			\toprule
			\diagbox{k}{a} &2 & $2(\overline{\rho}+1)$    & 4    & $4(\overline{\rho}+1)$      & 2 & $2(\overline{\rho}+1)$       & 4       & $4(\overline{\rho}+1)$            \\\midrule
			\multirow{10}{0.2cm}{1}
			& 7.79239 & 7.84287 & 7.84287 & 7.86916 & 7.50961 & 7.93180 & 7.751918 & 7.95346 \\
			& 9.50518 & 9.58812 & 9.58812 & 9.63647 & 9.41399 & 10.24419 & 9.85755 & 10.30940 \\
			& 17.11841 & 17.3928 & 17.3928 & 17.54436 & 15.58637 & 18.11537 & 17.02452 & 18.27644 \\
			& 31.33856 & 32.32444 & 32.32444 & 32.87046 & \fbox{20.20981} & 32.46180 & 28.30139 & 33.13710 \\
			& 37.91183 & 39.10970 & 39.10970 & 39.77174 & \fbox{22.88114} & 39.24633 & \fbox{33.68424} & 40.04751 \\
			& 39.99929 & 41.74630 & 41.74630 & 42.63479 & \fbox{24.96308} & 42.06174 & \fbox{34.82433} & 43.06573 \\
			& 45.13923 & 46.84143 & 46.84143 & 47.87919 & \fbox{26.94440} & 47.15775 & \fbox{38.22918} & 48.51337 \\
			& 65.54764 & 70.98606 & 70.98606 & 73.47962 & \fbox{28.69336} & 70.40197 & 40.38647 & 73.50271 \\
			& 68.01180 & 72.89916 & 72.89916 & 75.69995 & 29.71622 & 71.80870 & \fbox{45.80849} & 75.25708 \\
			& 75.99282 & 82.06519 & 82.06519 & 85.68356 & \fbox{30.54102} & 80.68556 & 52.33948 & 85.33071 \\
			\midrule
			\multirow{10}{0.2cm}{2}
			&7.84285 & 7.83847 & 7.83847 & 7.83579 & \fbox{7.11555} & 7.92261 & 7.94064 & 7.94064 \\
			&9.60680 & 9.60101 & 9.60101 & 9.59739 & 7.93392 & 10.28829 & 10.31911 & 10.28033 \\
			&17.47764 & 17.45441 & 17.45441 & 17.43976 & 10.27103 & 18.22145 & 18.31268 & 18.19761 \\
			&32.56841 & 32.47677 & 32.47677 & 32.42035 & 18.17500 & 32.93874 & 33.25550 & 32.86073 \\
			&39.44163 & 39.32520 & 39.32520 & 39.25305 & 32.71864 & 39.83724 & 40.23387 & 39.73792 \\
			&42.24515 & 42.09761 & 42.09761 & 42.00507 & 39.60958 & 42.82695 & 43.30082 & 42.70503 \\
			&47.48150 & 47.30527 & 47.30527 & 47.19430 & 42.28333 & 48.37230 & 48.98499 & 48.21260 \\
			&72.52868 & 72.13023 & 72.13023 & 71.87892 & 47.90767 & 73.02722 & 74.26323 & 72.70616 \\
			&74.30735 & 73.85231 & 73.85231 & 73.57203 & \fbox{50.66857} & 74.44508 & 75.92709 & 74.08280 \\
			&84.19038 & 83.61866 & 83.61866 & 83.26009 & \fbox{71.03522} & 84.80387 & 86.65687 & 84.32550 \\		
			\midrule
			\multirow{10}{0.2cm}{3}
			& 7.83255 & 7.83254 & 7.83254 & 7.83253 & 7.91308 & 7.91297 & 7.91302 & 7.91296 \\
			& 9.59291 & 9.59290 & 9.59290 & 9.59289 & 10.27009 & 10.26995 & 10.27003 & 10.26993 \\
			& 17.42093 & 17.42079 & 17.42079 & 17.42070 & 18.16644 & 18.16535 & 18.16597 & 18.16523 \\
			& 32.34647 & 32.34556 & 32.34556 & 32.34501 & 32.76030 & 32.75424 & 32.75768 & 32.75350 \\
			& 39.15855 & 39.15738 & 39.15738 & 39.15668 & 39.60958 & 39.60204 & 39.60637 & 39.60110 \\
			& 41.87755 & 41.87545 & 41.87545 & 41.87417 & 42.54384 & 42.53130 & 42.53849 & 42.52973 \\
			& 47.04165 & 47.03908 & 47.03908 & 47.03749 & 47.99840 & 47.98165 & 47.99156 & 47.97936 \\
			& 71.49928 & 71.48939 & 71.48939 & 71.48330 & 72.26535 & 72.21129 & 72.24347 & 72.20388 \\
			& 73.15827 & 73.14759 & 73.14759 & 73.14115 & 73.59971 & 73.53524 & 73.57238 & 73.52705 \\
			& 82.70733 & 82.69098 & 82.69098 & 82.68094 & 83.66459 & 83.56630 & 83.62420 & 83.55301 \\
			\bottomrule
	\end{tabular}}
	\label{tabla:square-NIP-k123_2}
\end{table}

We observe that when the NIP method is performing the approximation of the spectrum with the selected stabilization parameters,  there are no spurious eigenvalues arising for small values of $\texttt{a}$. For $\rho_1$, since $\overline{\rho}\geq 1$, we fall in the results presented in \cite{MR4077220}, whereas the scheme with density $\rho_2$ remains clean with $\overline{\rho}>2$. 

In contrast, we observe the results in Table \ref{tabla:square-NIP-k123_2}, where a completely clean spectrum is observed for $\rho_1$, while the spectrum with $\rho_2$ and $\texttt{a}=2,4,$ result in the computation of spurious eigenvalues. When we compare this behavior with the SIP method, we notice that the NIP method introduces more spurious modes compared with the SIP method, for the same stabilization parameter. However, when the polynomial degree is increased, the NIP methods behaves better than the SIP method where for $\texttt{a}\geq 4$

Let us remark that when the NIP method is considered, Table \ref{tabla:square-NIP-k123_2} shows that for $k=1$ the number of spurious eigenvalues is clearly large compared for $k=2$. Moreover, for $k=3$ there is no visible spurious eigenvalues for any stabilization parameter. This has been also observed for $k=4,5,6$. From these results, we infer that a safe parameter in which we can operate with the NIP method is for stabilizations such that $\texttt{a}\geq 4$.

Now in Tables \ref{tabla:square-NIP-convergencia}--\ref{tabla:square-NIP-convergencia_2} we present convergence rates for the NIP method when different polynomial degrees are considered. From these tables, we observe that a suboptimal order of convergence is attained. More precisely, when $k$ is odd, the order of convergence is $\mathcal{O}(h^k)$, whereas when $k$ is even, the order of convergence is $\mathcal{O}(h^{k+1})$, which clearly is not optimal compared with the SIP method.

We remark that for the IIP method ($\varepsilon =0$), the results related to the spurious analysis and order of convergence are similar to the NIP method. For the sake of brevity, we skip the results.
\begin{table}[!h]
	{\footnotesize
		\begin{center}
			\caption{Test 1. Convergence analysis for $\texttt{a}=10(\overline{\rho}+1)$, $\rho(x,y)=\frac{1}{x^2+y^2+1}$, and $k=1,2,3$ for the NIP method ($\varepsilon=-1$). }
			\begin{tabular}{c |c c c c |c| c}
				\toprule
				k        & $N=10$             &  $N=20$         &   $N=30$         & $N=40$ & Order & $\lambda_{i}$ \\ 
				\midrule\multirow{4}{0.2cm}{1}
					& 7.86660 & 7.84112 & 7.83636 & 7.83468 & 1.99 & 7.83253 \\
				& 9.64235 & 9.60531 & 9.59842 & 9.59600 & 1.99 & 9.59286 \\
				& 17.56622 & 17.45746 & 17.43704 & 17.42986 & 1.98 & 17.42050 \\
				& 32.91484 & 32.48800 & 32.40836 & 32.38040 & 1.99 & 32.34428 \\
				\midrule
				\multirow{4}{0.2cm}{2} 
					& 7.83341 & 7.83273 & 7.83262 & 7.83258 & 2.16 & 7.83253 \\
				& 9.59410 & 9.59316 & 9.59300 & 9.59295 & 2.16 & 9.59289 \\
				& 17.42594 & 17.42177 & 17.42110 & 17.42087 & 2.23 & 17.42064 \\
				& 32.36612 & 32.34895 & 32.34630 & 32.34542 & 2.29 & 32.34454 \\
				\midrule
				\multirow{4}{0.2cm}{3}   
					& 7.83253 & 7.83253 & 7.83253 & 7.83253 & 4.10 & 7.83253 \\
				& 9.59288 & 9.59288 & 9.59288 & 9.59288 & 4.08 & 9.59288 \\
				& 17.42061 & 17.42060 & 17.42060 & 17.42060 & 4.14 & 17.42060 \\
				& 32.34446 & 32.34434 & 32.34434 & 32.34434 & 4.15 & 32.34434 \\			
				\bottomrule            
			\end{tabular}
	\end{center}}
	\label{tabla:square-NIP-convergencia}
\end{table}
\begin{table}[!h]
	{\footnotesize
		\begin{center}
			\caption{Test 1. Convergence analysis for $\texttt{a}=10(\overline{\rho}+1)$, $\rho(x,y)=e^{xy+1}$, and $k=1,2,3$ for the NIP method ($\varepsilon=-1$). }
			\begin{tabular}{c |c c c c |c| c}
				\toprule
				k        & $N=10$             &  $N=20$         &   $N=30$         & $N=40$ & Order & $\lambda_{i}$ \\ 
				\midrule\multirow{4}{0.2cm}{1}
					& 7.94590 & 7.92131 & 7.91668 & 7.91505 & 1.98 & 7.91294 \\
				& 10.32299 & 10.28334 & 10.27591 & 10.27329 & 1.98 & 10.26988 \\
				& 18.30356 & 18.20050 & 18.18092 & 18.17402 & 1.96 & 18.16484 \\
				& 33.28860 & 32.88857 & 32.81330 & 32.78681 & 1.97 & 32.75168 \\
				\midrule
				\multirow{4}{0.2cm}{2} 
					& 7.91440 & 7.91328 & 7.91309 & 7.91303 & 2.16 & 7.91296 \\
				& 10.27278 & 10.27057 & 10.27020 & 10.27007 & 2.14 & 10.26993 \\
				& 18.17420 & 18.16711 & 18.16595 & 18.16556 & 2.19 & 18.16513 \\
				& 32.78360 & 32.75927 & 32.75544 & 32.75417 & 2.25 & 32.75282 \\
				\midrule
				\multirow{4}{0.2cm}{3}   
					& 7.91295 & 7.91295 & 7.91295 & 7.91295 & 4.25 & 7.91295 \\
				& 10.26992 & 10.26991 & 10.26991 & 10.26991 & 4.20 & 10.26991 \\
				& 18.16510 & 18.16508 & 18.16507 & 18.16507 & 4.22 & 18.16507 \\
				& 32.75274 & 32.75259 & 32.75258 & 32.75258 & 4.17 & 32.75258 \\		
				\bottomrule            
			\end{tabular}
	\end{center}}
	\label{tabla:square-NIP-convergencia_2}
\end{table}
\subsection{Test 2. A 2D reactor} 
This experiment aims to show the performance of the scheme when there is a non-polygonal domain and the density $\rho(x)$ is non constant. This implies that a priori we have zones within the domain where the fluid density is higher or lower. The considered domain is given by $\Omega:=\widetilde{\Omega}\backslash\bigcup_{i=1}^4\Omega_i$, where
$$
\begin{aligned}
	\widetilde{\Omega}&:=\{(x,y)\in\mathbb{R}^2\,:\, x^2+y^2\leq 1\},\\
	\Omega_i&:=\left\{(x,y)\in\mathbb{R}^2\,:\, \left(x+\frac{1}{3}a\right)^2+\left(y+\frac{1}{3}b\right)^2< \frac{1}{8}\right\},
\end{aligned}
$$
with $a=(-1)^{i}$ and $b=(-1)^{\frac{i(i-1)}{2}+1}$, for $i=1,2,3,4$. This domain resembles a cross-section of a reactor. The domain is meshed such that $h\approx 1/2^N$, with $N=3,4,5,6$. For a clean spectrum, we choose the stabilization parameter $\texttt{a}=20$. It notes that the expected order of convergence of the eigenvalues is $\mathcal{O}(h^2)$. This is due to the variational crime that is committed when a curved domain is approximated by polygons. This fact has been also observed in \cite{MR4077220}. 

In Figure \ref{fig:reactor-errores}  we  report the experimental convergence obtained by least squares fitting, solving the problem using the SIP and NIP scheme. Here, $\texttt{dof}$ denotes the number of degrees of freedom, whereas the error on the $i$-th eigenvalue is denoted by $\texttt{err}(\lambda_i)$, with 
$$
\texttt{err}(\lambda_i):=\vert \lambda_{h_i}-\lambda_{i}\vert.
$$
For the IIP method, the values obtained were similar to those obtained with NIP. Note that in all cases, the order of convergence remains as predicted theoretically, for all selected fluid densities. This is followed by Figure \ref{fig:reactor-modo1}, where we show the eigenfunctions associated to the first eigenvalue for each density function. Note that the fluid particles tend to move faster in the lower density zones, as expected. It is worth noting that, when using the NIP method, the spectrum may contain complex values, so we have only reported the real part of the spectrum. These complex contributions can appear at any place and any refinement. For example, for $\rho(x,y)=1/[\cos(\pi x)\cos(\pi y) + 2]$ and $N=3$, the first two computed eigenvalues are of the form $\lambda_{h}=3.47649\pm0.00009i$.
\begin{figure}[!h]
\centering
\begin{minipage}{0.49\linewidth}\centering
	\includegraphics[scale=0.17]{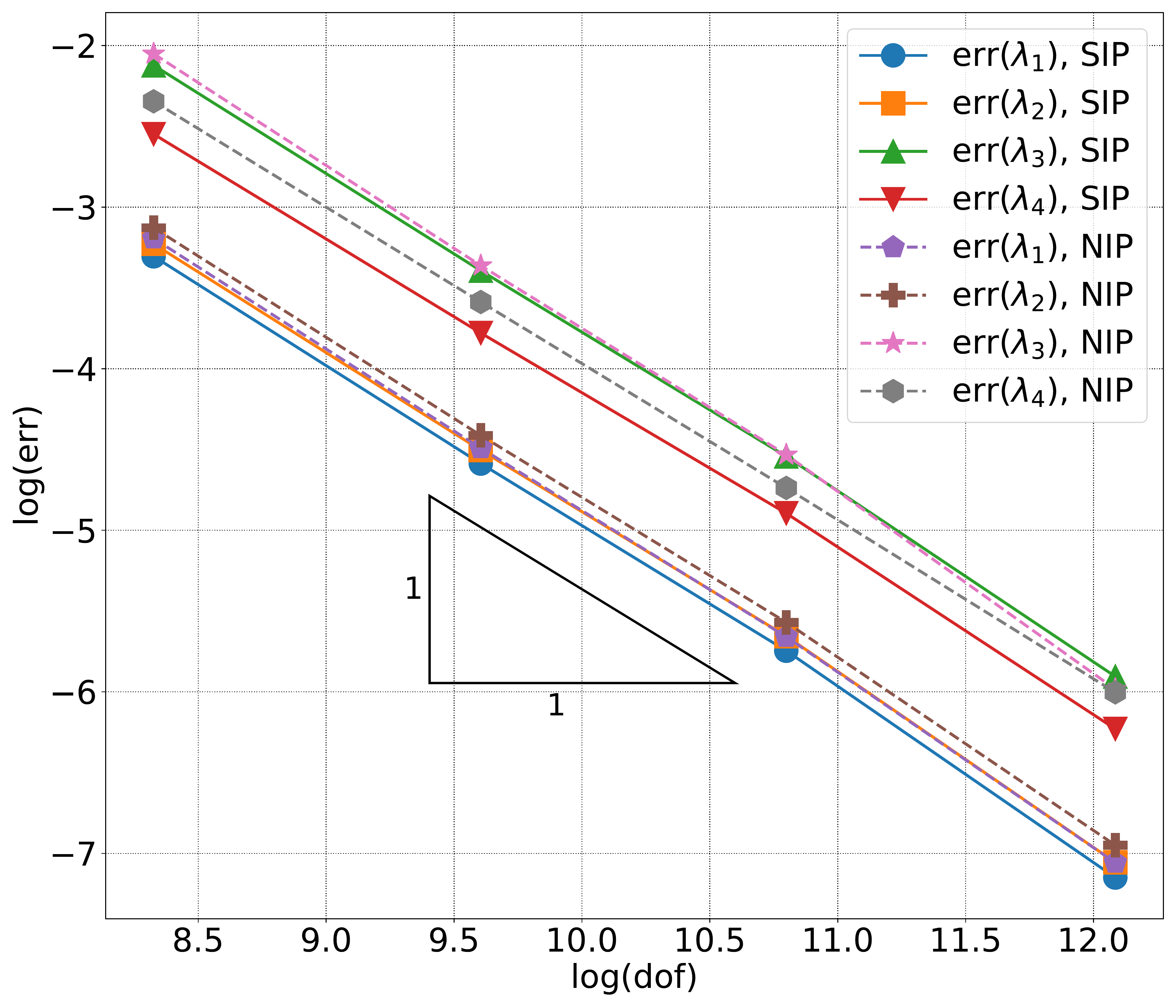}\\
	{\footnotesize $\rho(x,y)=e^{\sin\pi x\cos \pi y}$}
\end{minipage}
\begin{minipage}{0.49\linewidth}\centering
	\includegraphics[scale=0.17]{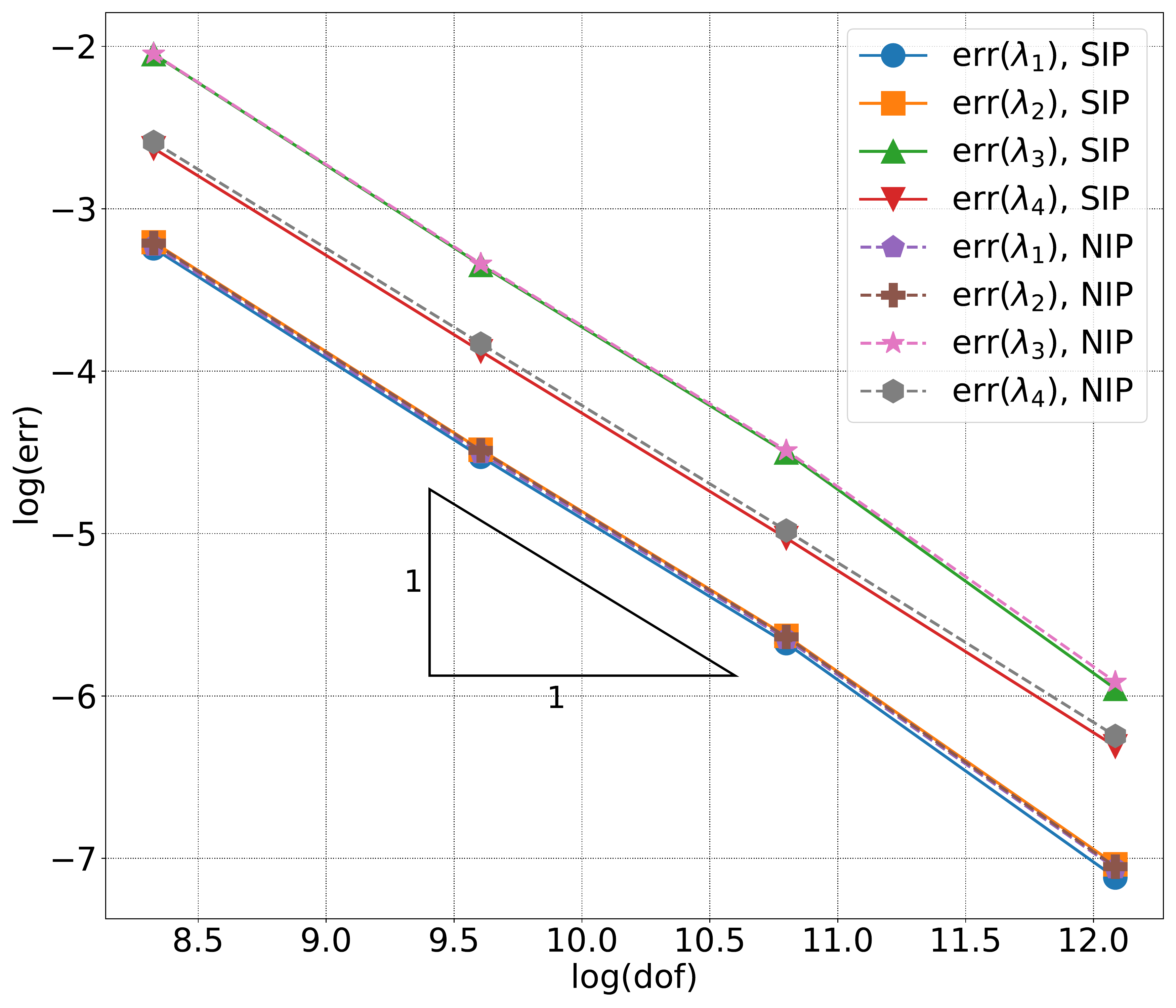}\\
	{\footnotesize $\rho(x,y)=e^{\sin\left(\frac{\pi x}{2}\right)\cos \left(\frac{\pi y}{2}\right)}$}
\end{minipage}\\
\begin{minipage}{0.49\linewidth}\centering
	\includegraphics[scale=0.17]{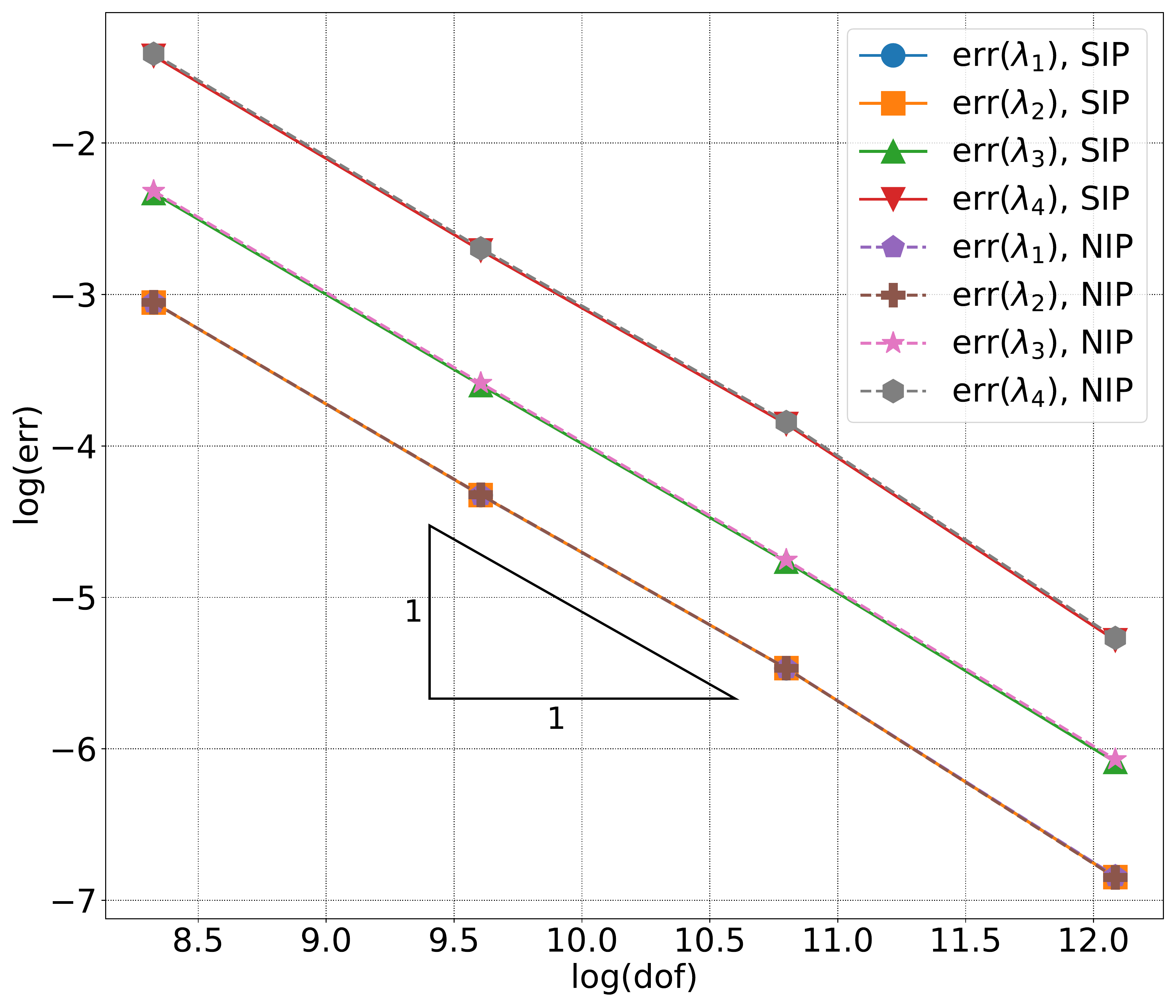}\\
	{\footnotesize $\rho(x,y)=\frac{1}{\cos(\pi x)\cos(\pi y) + 2}$}
\end{minipage}
\begin{minipage}{0.49\linewidth}\centering
	\includegraphics[scale=0.17]{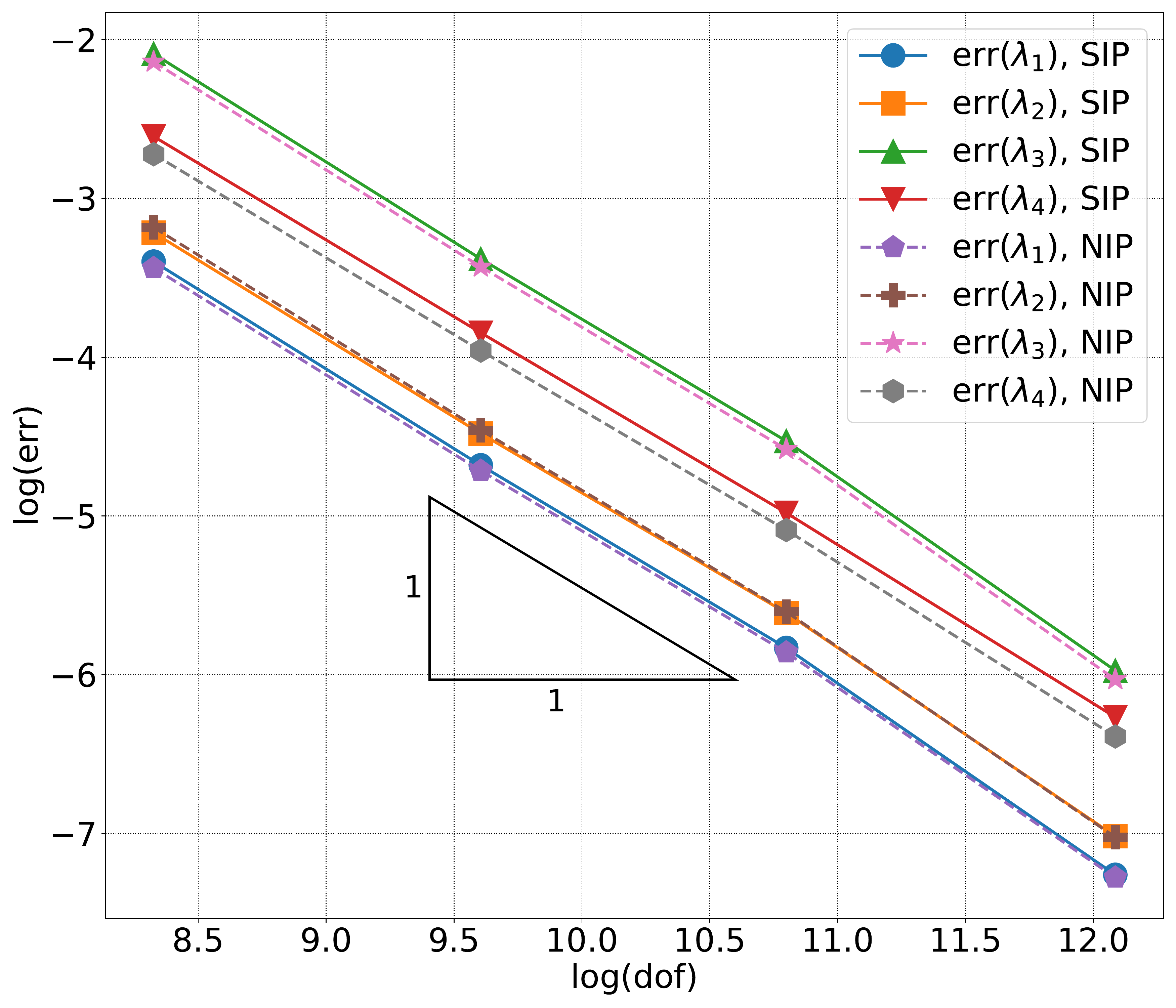}\\
	{\footnotesize $\rho(x,y)=e^{xy + 1}$}
\end{minipage}
\caption{Test 2. Error curves for the first four lowest computed eigenvalues when using different density functions.}
\label{fig:reactor-errores}
\end{figure}
\begin{figure}[!h]
	\centering
	\begin{minipage}{0.32\linewidth}\centering
		\includegraphics[scale=0.05,trim= 40cm 5cm 40cm 5cm, clip]{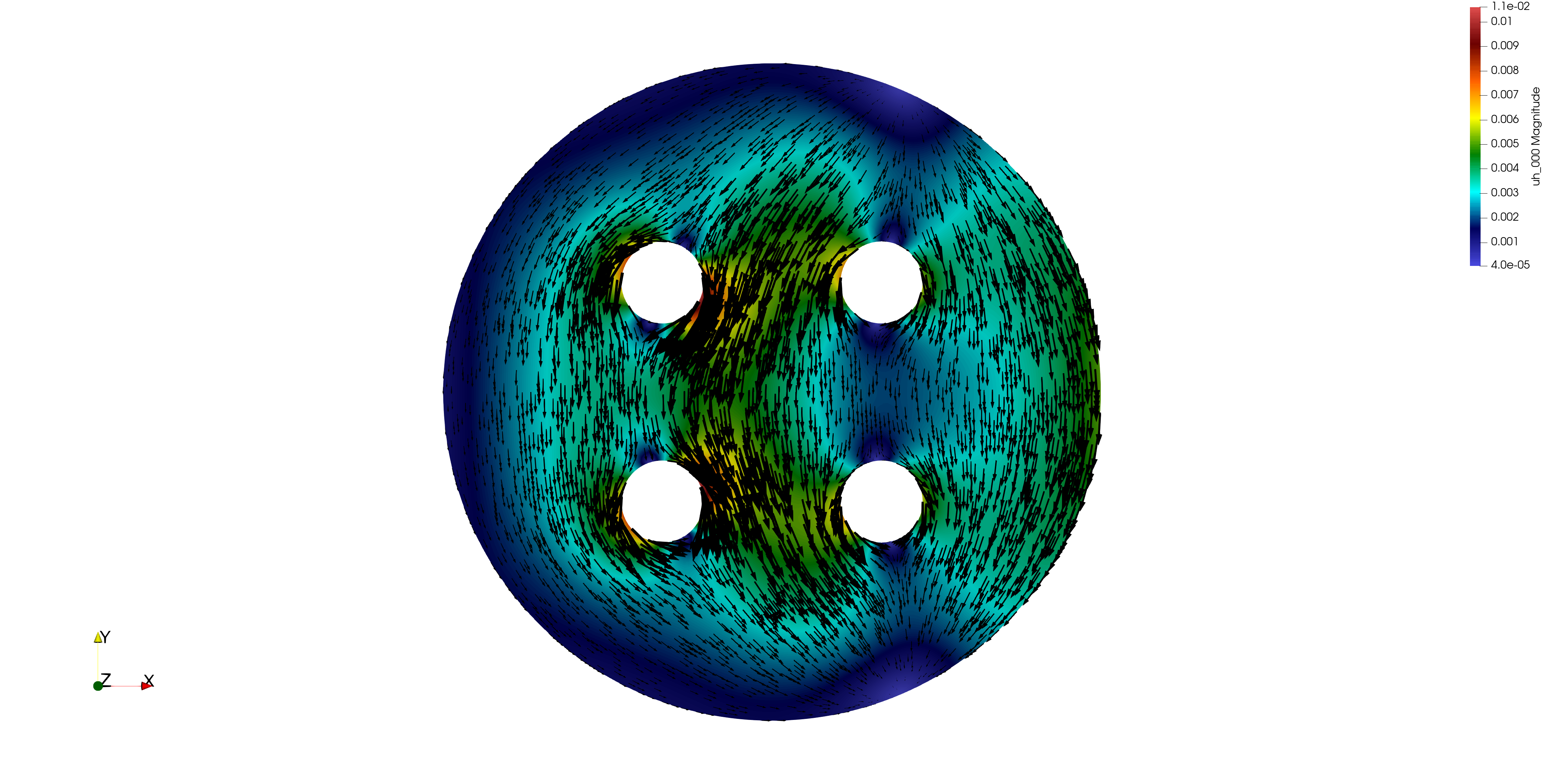}\\
		{\footnotesize $\bu_{1h}$}
	\end{minipage}
	\begin{minipage}{0.32\linewidth}\centering
		\includegraphics[scale=0.05,trim= 40cm 5cm 40cm 5cm, clip]{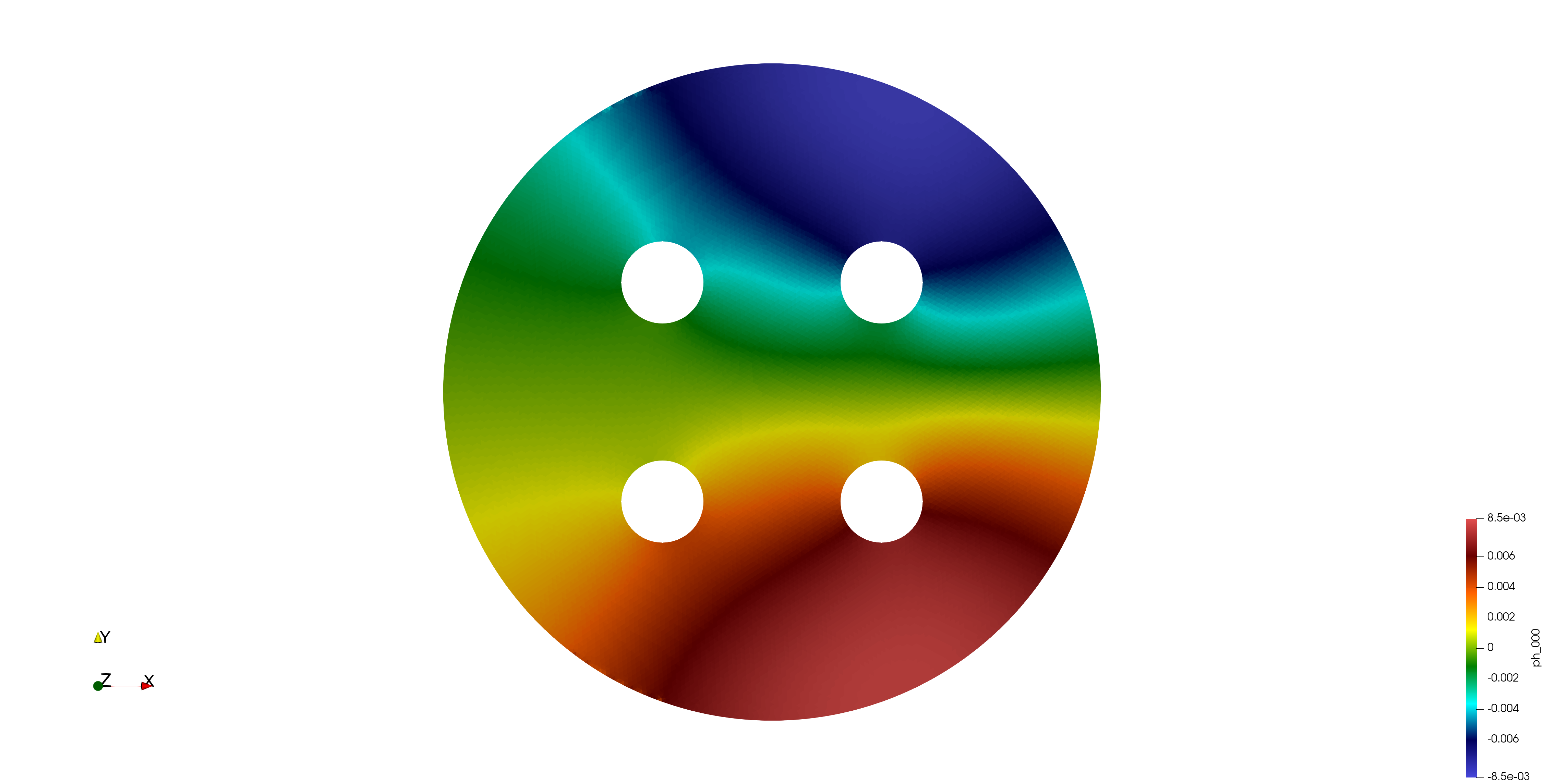}\\
		{\footnotesize $p_{1h}$}
	\end{minipage}
	\begin{minipage}{0.32\linewidth}\centering
		\includegraphics[scale=0.05,trim= 40cm 5cm 40cm 5cm, clip]{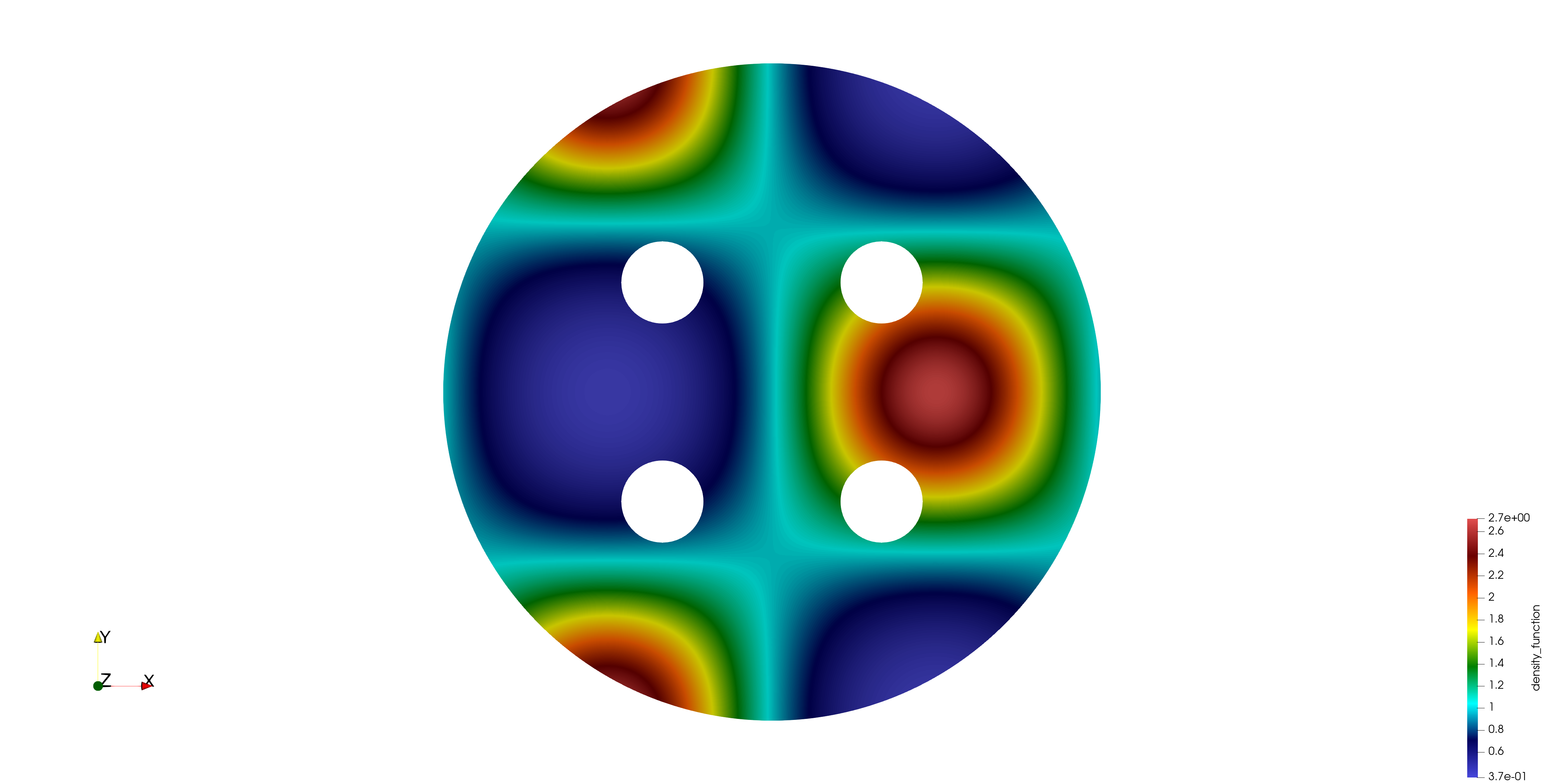}\\
		{\footnotesize $\rho(x,y)=e^{\sin\pi x\cos \pi y}$}
	\end{minipage}
	\begin{minipage}{0.32\linewidth}\centering
		\includegraphics[scale=0.05,trim= 40cm 5cm 40cm 5cm, clip]{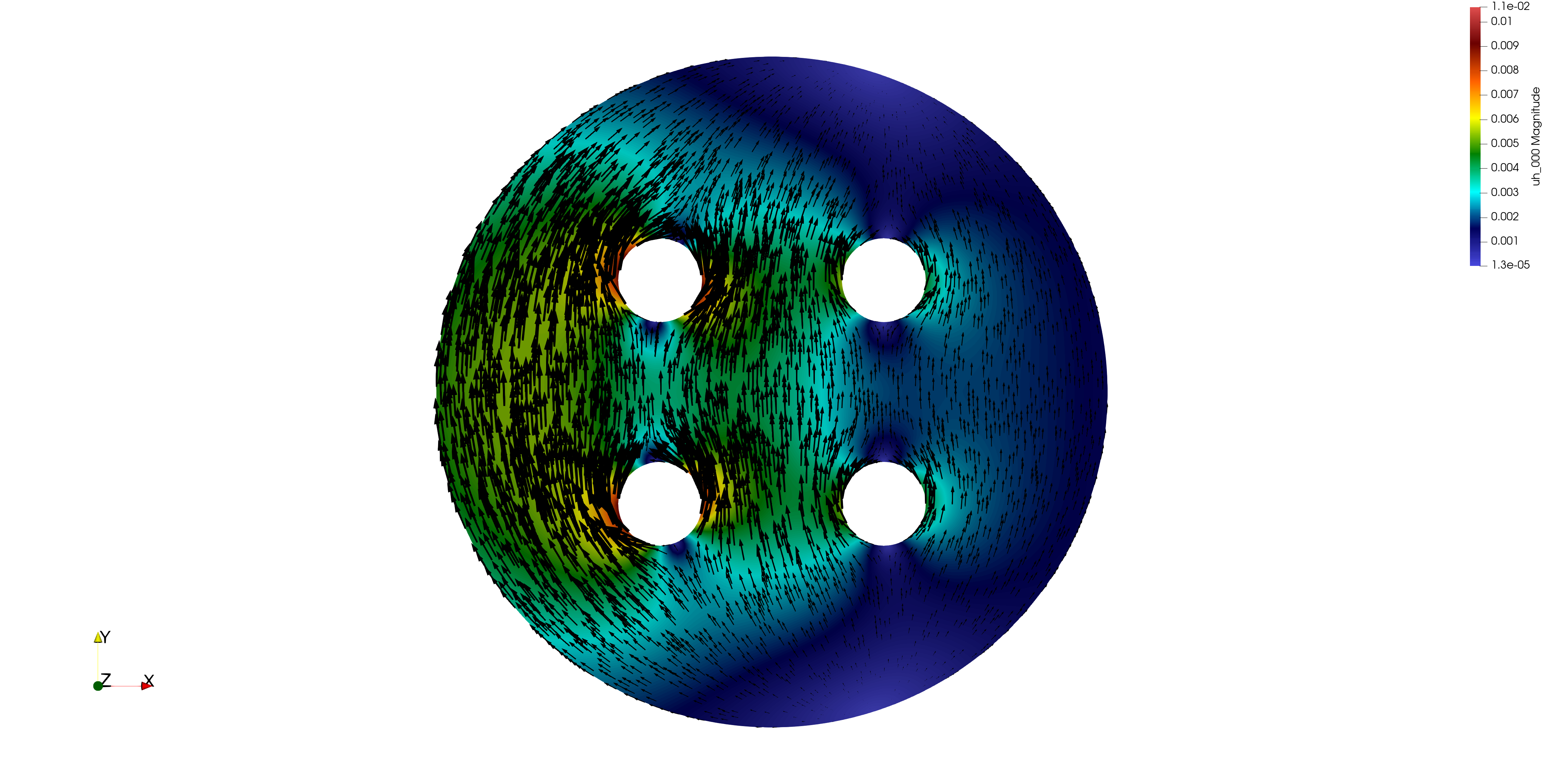}\\
		{\footnotesize $\bu_{1h}$}
	\end{minipage}
	\begin{minipage}{0.32\linewidth}\centering
		\includegraphics[scale=0.05,trim= 40cm 5cm 40cm 5cm, clip]{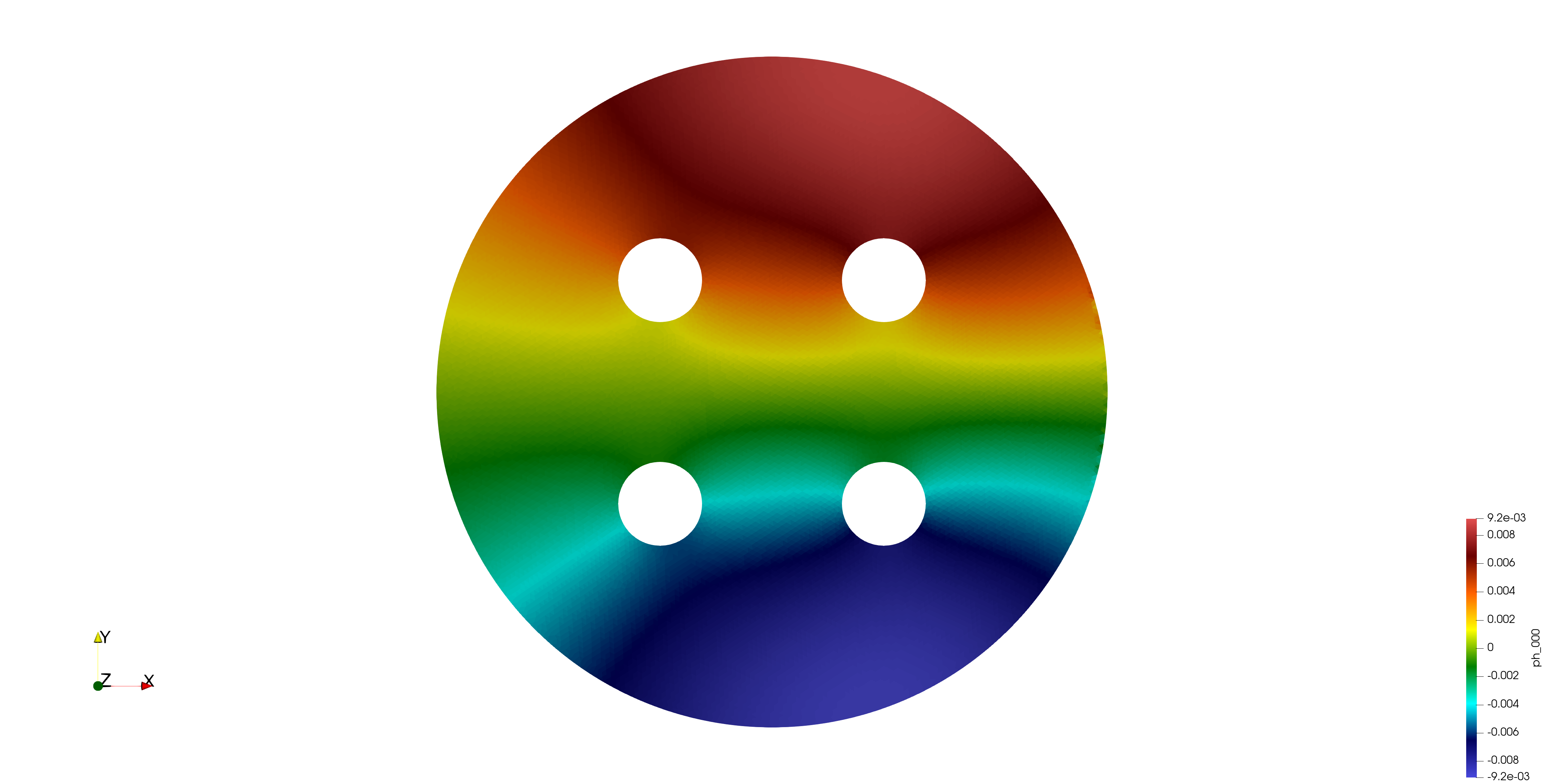}\\
		{\footnotesize $p_{1h}$}
	\end{minipage}
	\begin{minipage}{0.32\linewidth}\centering
		\includegraphics[scale=0.05,trim= 40cm 5cm 40cm 5cm, clip]{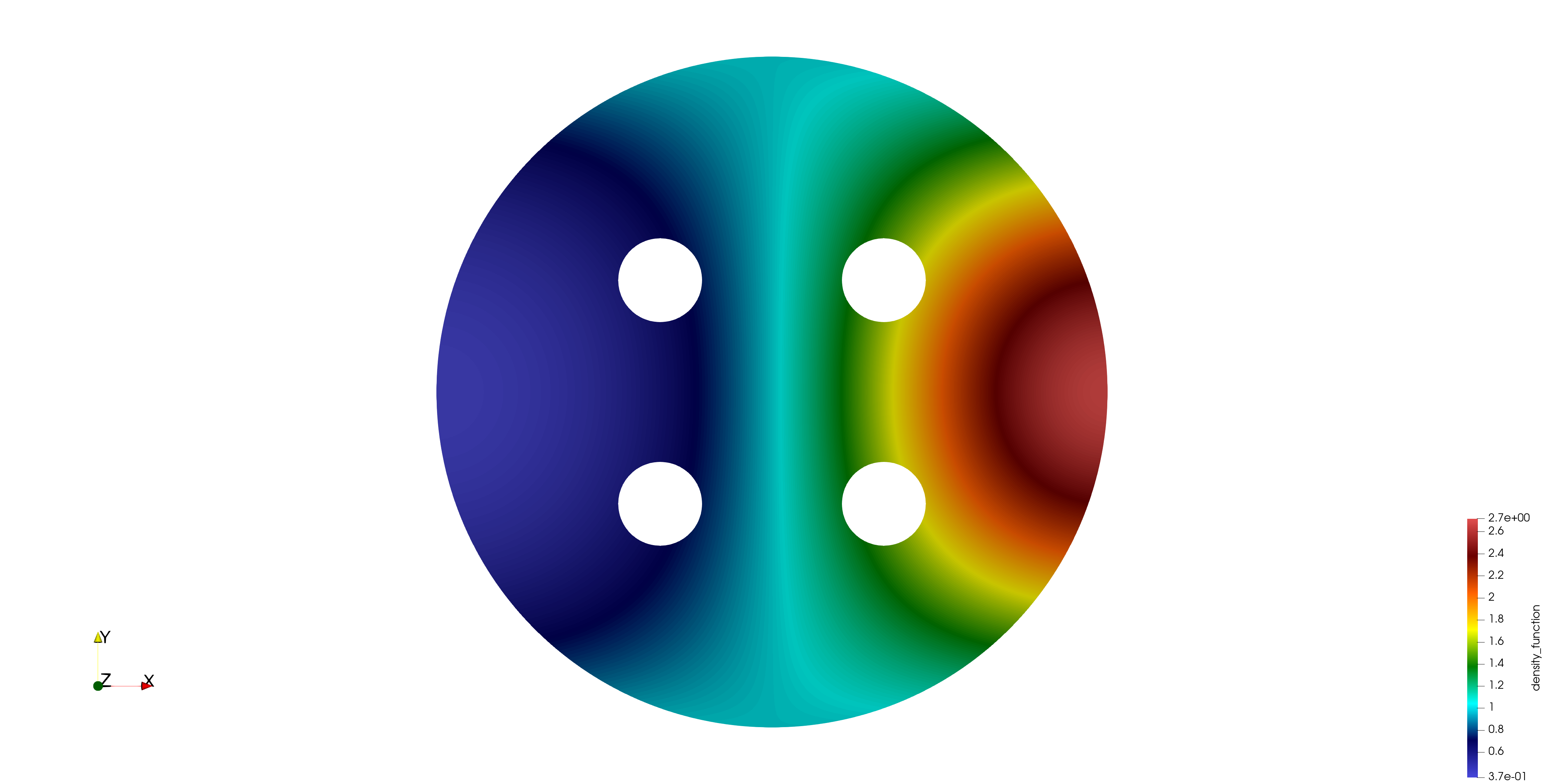}\\
		{\footnotesize $\rho(x,y)=e^{\sin\left(\frac{\pi x}{2}\right)\cos \left(\frac{\pi y}{2}\right)}$}
	\end{minipage}
	\begin{minipage}{0.32\linewidth}\centering
		\includegraphics[scale=0.05,trim= 40cm 5cm 40cm 5cm, clip]{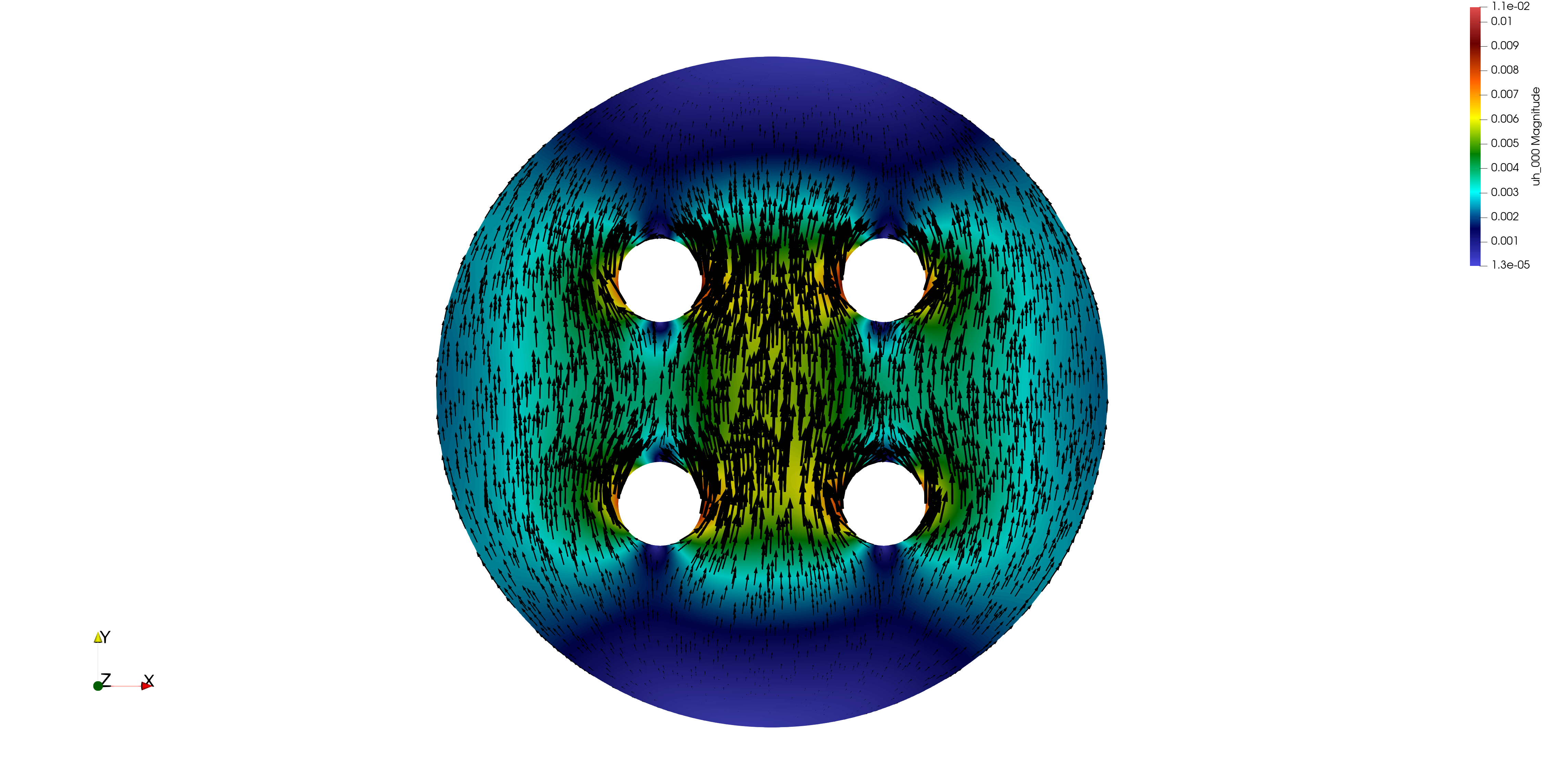}\\
		{\footnotesize $\bu_{1h}$}
	\end{minipage}
	\begin{minipage}{0.32\linewidth}\centering
		\includegraphics[scale=0.05,trim= 40cm 5cm 40cm 5cm, clip]{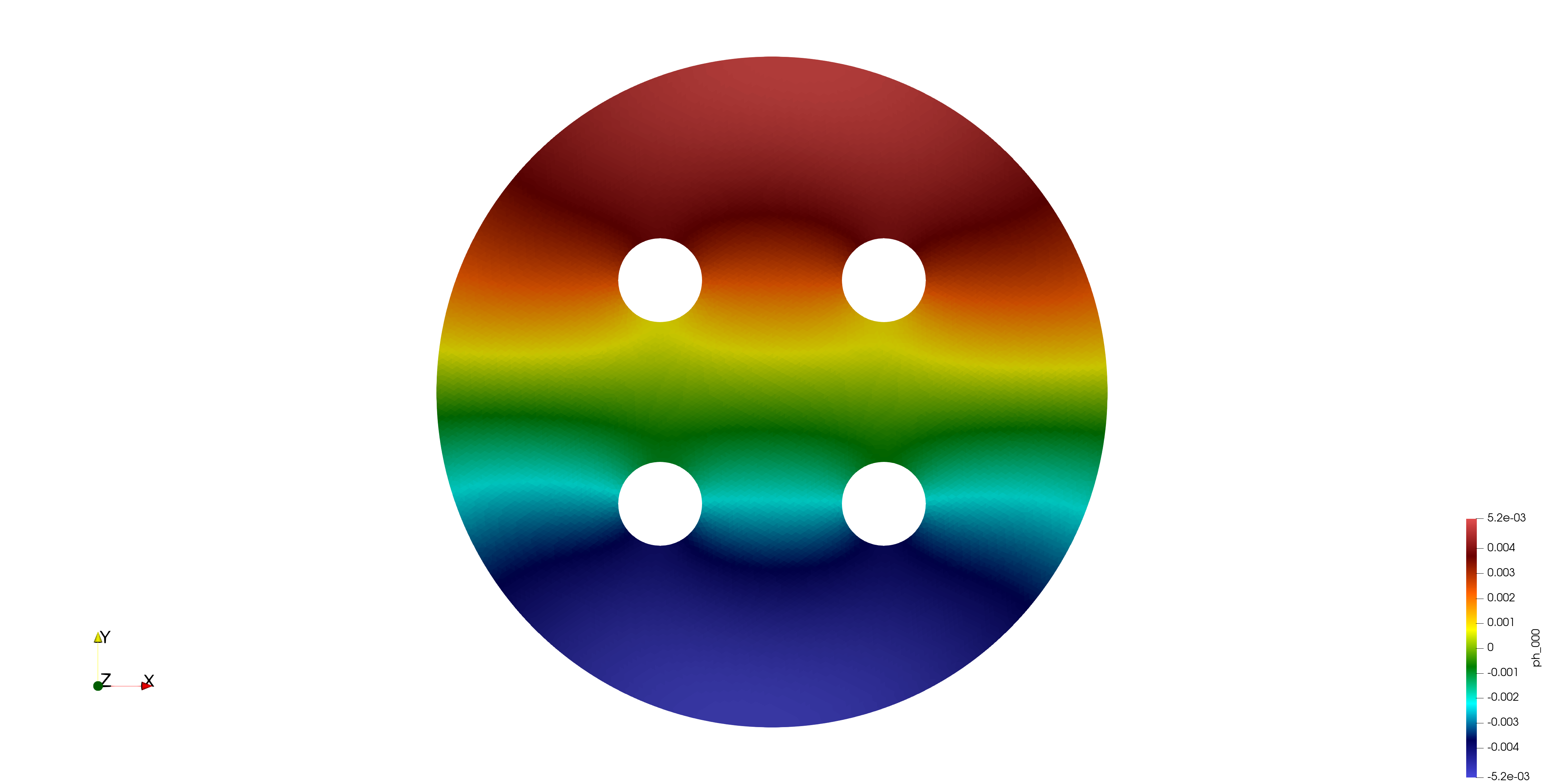}\\
		{\footnotesize $p_{1h}$}
	\end{minipage}
	\begin{minipage}{0.32\linewidth}\centering
		\includegraphics[scale=0.05,trim= 40cm 5cm 40cm 5cm, clip]{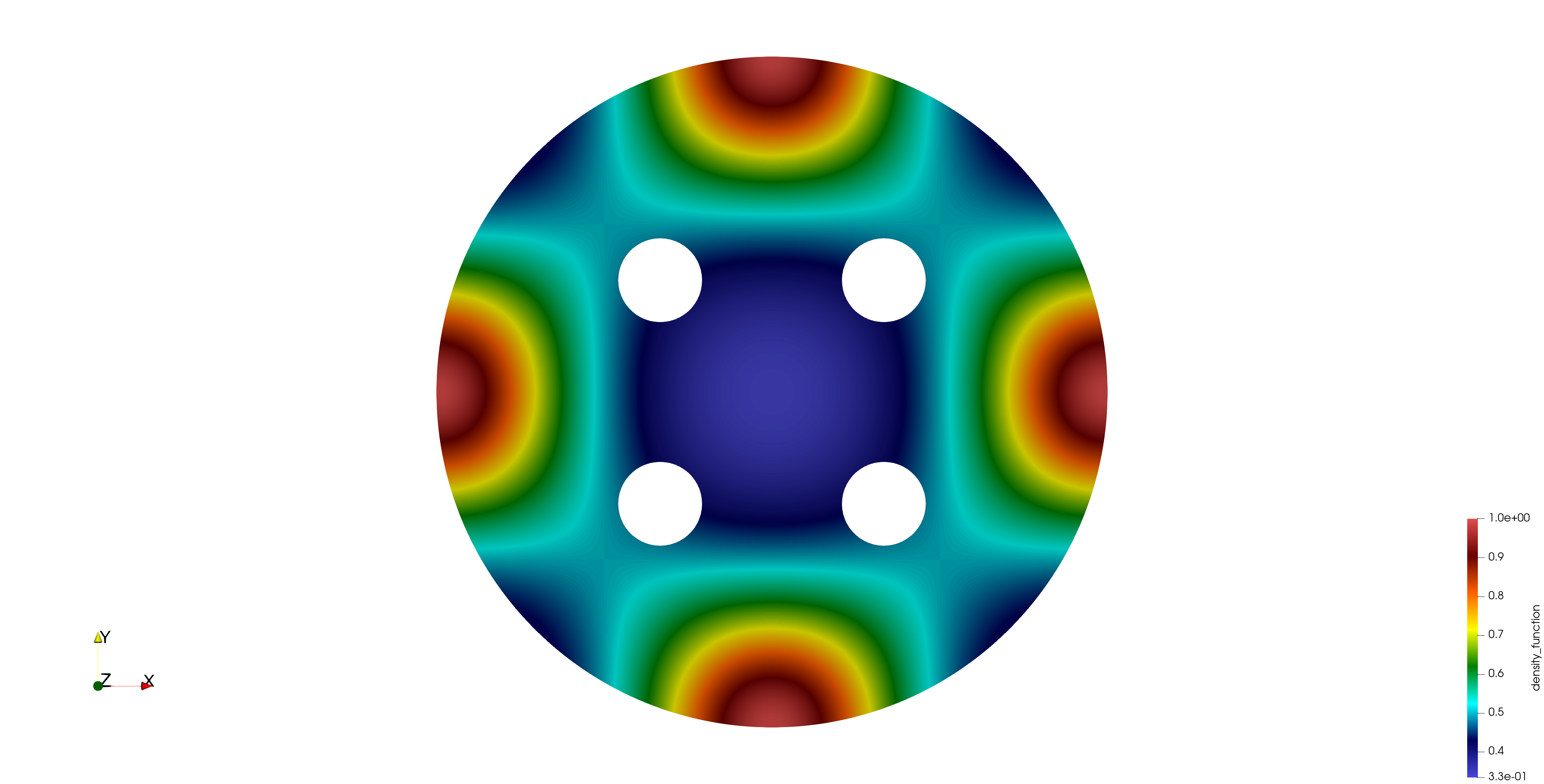}\\
		{\footnotesize $\rho(x,y)=\frac{1}{\cos(\pi x)\cos(\pi y) + 2}$}
	\end{minipage}
	\begin{minipage}{0.32\linewidth}\centering
		\includegraphics[scale=0.05,trim= 40cm 5cm 40cm 5cm, clip]{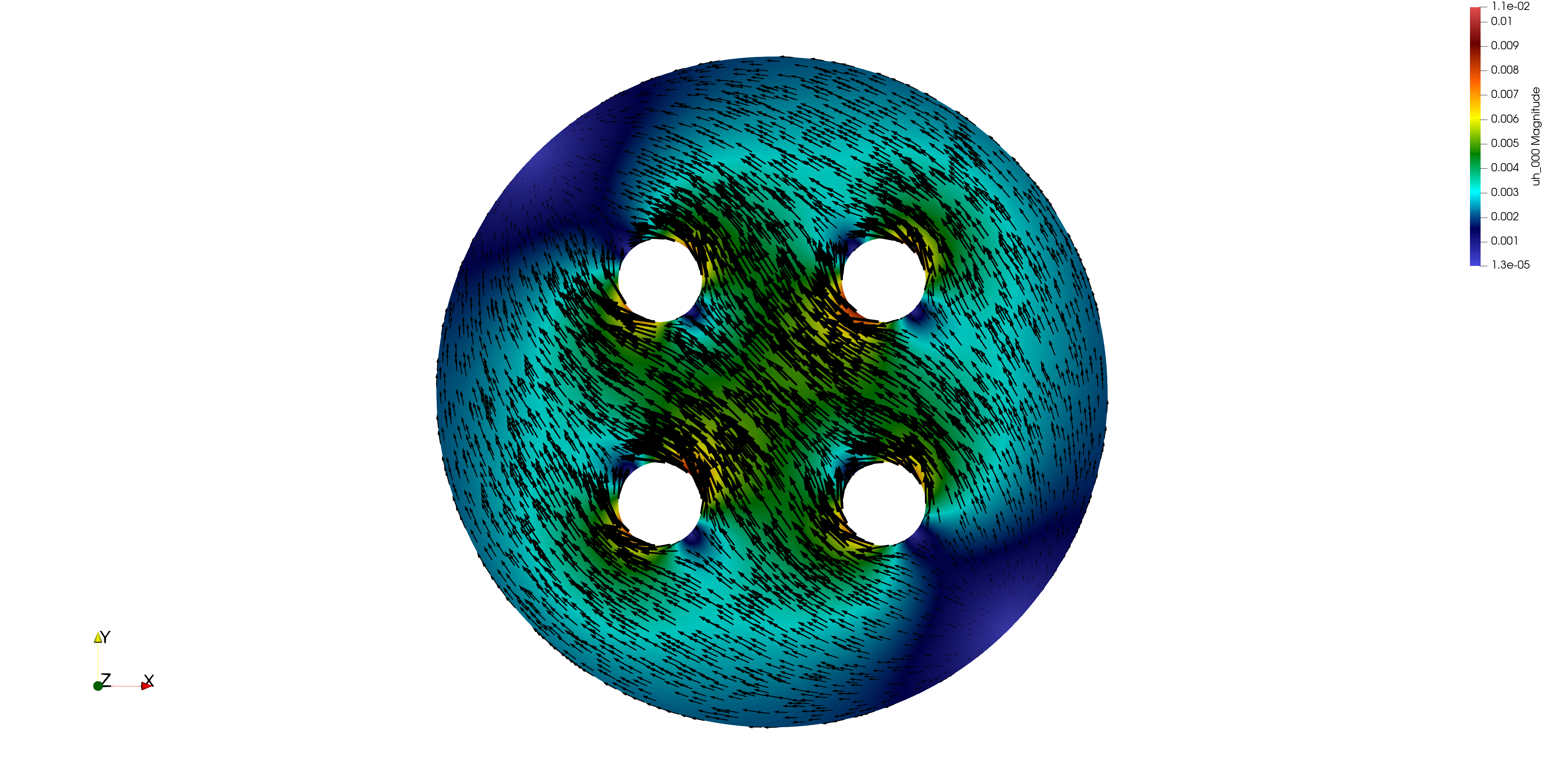}\\
		{\footnotesize $\bu_{1h}$}
	\end{minipage}
	\begin{minipage}{0.32\linewidth}\centering
		\includegraphics[scale=0.05,trim= 40cm 5cm 40cm 5cm, clip]{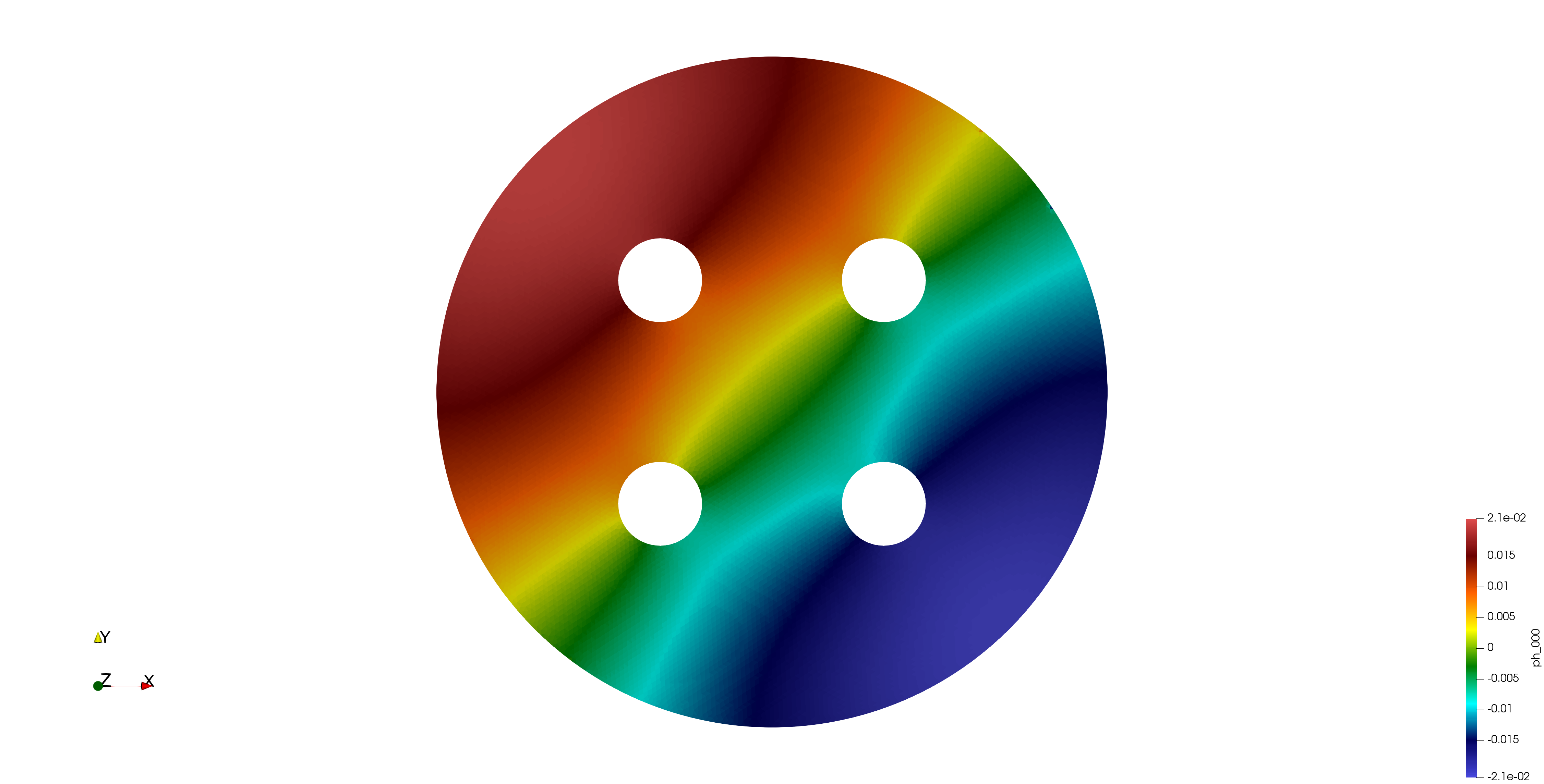}\\
		{\footnotesize $p_{1h}$}
	\end{minipage}
	\begin{minipage}{0.32\linewidth}\centering
		\includegraphics[scale=0.05,trim= 40cm 5cm 40cm 5cm, clip]{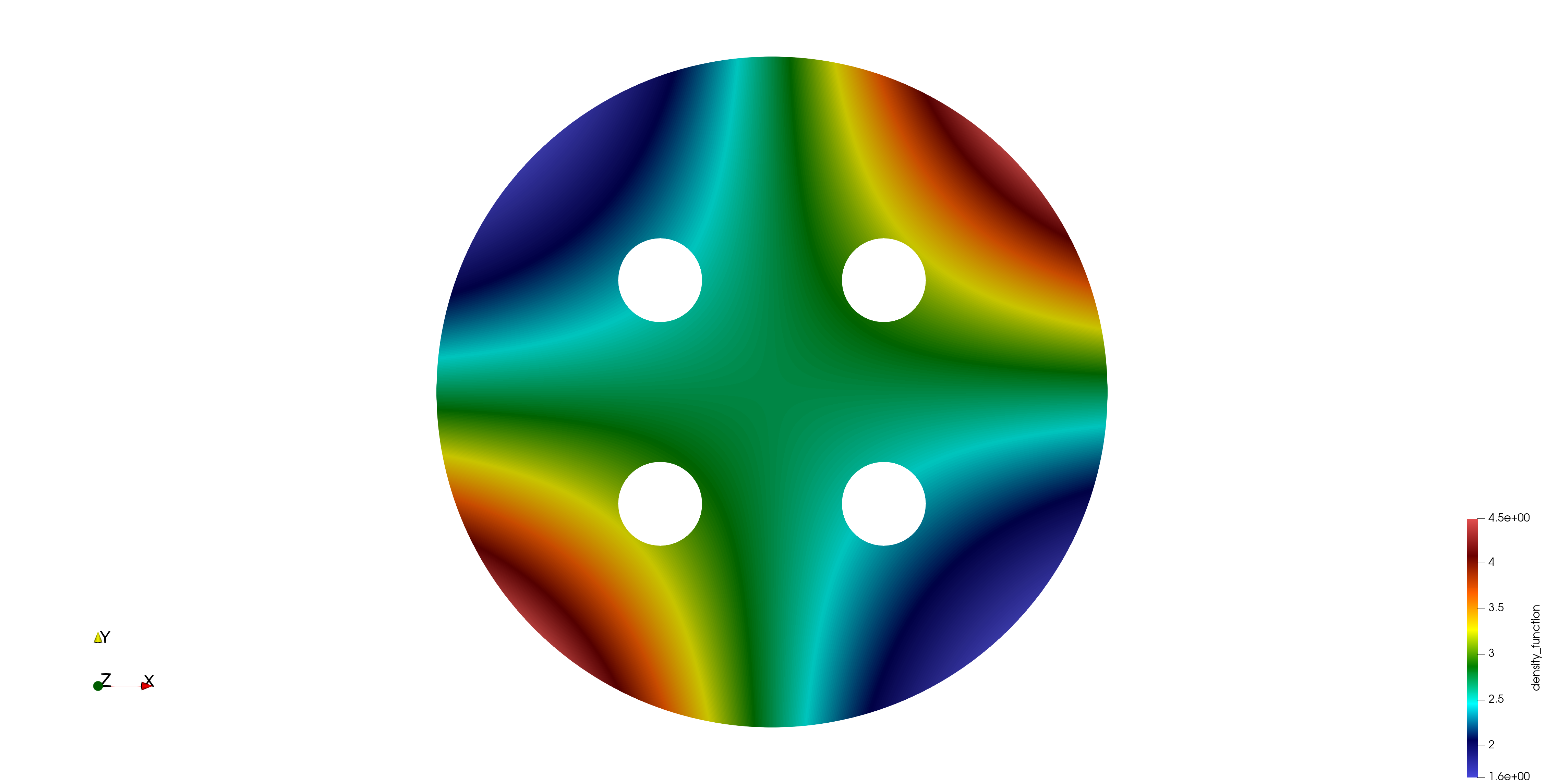}\\
		{\footnotesize $\rho(x,y)=e^{xy + 1}$}
	\end{minipage}
	\caption{Test 2. Lowest computed eigenmodes and postprocessed pressure using different density functions.}
	\label{fig:reactor-modo1}
\end{figure}

\subsection{Test 3: A 3D torus} This test aims to assess the DG method on a three dimensional non-polygonal domain. We consider a torus azimuthally symmetric about the $z-$axis, whose domain is characterized by $$\Omega:=\left\{(x,y,z)\in\mathbb{R}^3\;:\;\left(\sqrt{x^2+y^2}-R\right)^2+z^2=r^2\right\},$$ where $R=1/2$ and $r=1/4$. For simplicity, the physical parameters are taken equal to $1$.
In Table \ref{tabla:toro} we  observe that for the lowest order, the variants of our discontinuous scheme converge with quadratic order, as the theory predicts. This is due to the fact that the eigenfunctions in this domain are regular. However, a variational crime is also being committed when approximating this domain with polyhedrons, so this order will not to improve for $k>1$. Finally, in Figures \ref{fig:toro-modo3} and \ref{fig:toro-modo5} we depict the velocity field along with the post-processed pressure for the third and fifth eigenmodes.
\begin{table}[!h]
	{\footnotesize
		\begin{center}
			\caption{Test 3. Lowest computed eigenvalues for $k$$\,=\,$$1$, and $\texttt{a}=10$ for the SIP, NIP and IIP method. }
			\begin{tabular}{c |c c c c |c| c}
				\toprule
				method        & $N=15$             &  $N=20$         &   $N=25$         & $N=30$ & Order & $\lambda_{extr}$ \\ 
				\midrule
								& 4.20546& 4.20096  & 4.19911   & 4.19778 		& 1.97& 4.19504     \\
				& 4.20588& 4.20127  &   4.19923   & 4.19794 	& 1.94 &4.19497    \\
				\multirow{2}{0.6cm}{SIP}
				& 15.90982& 15.87822  &   15.86392   & 15.85563	&  2.00& 15.83625   \\
				& 15.91094& 15.87873   &   15.86493  & 15.85597	& 2.00& 15.83658   \\
				& 33.31118& 33.20592   &   33.15811  & 33.12984	&  1.96&33.06265   \\
				
				\hline
				
				&4.20528&4.20086&4.19904 &4.19774&  1.98&4.19506     \\
				&4.20571&4.20117&4.19917 &4.19789&  1.94&4.19497    \\
				\multirow{2}{0.6cm}{NIP} 
				&15.90732&15.87681&15.86295 &15.85497&  1.99&15.83604    \\
				&15.90853&15.87728&15.86398 &15.85527& 2.01& 15.83669     \\
				&33.30065&33.19987&33.15391 &33.12693&  1.96&33.06252    \\
				
				\hline
				
				& 4.20535 &4.20090  &4.19907	&4.19775  &  1.97 &4.19503   \\
				& 4.20577 &4.20121  &4.19920	&4.19791  &  1.94 &4.19497    \\
				\multirow{2}{0.6cm}{IIP}   
				& 15.90892 &15.87736  &15.86333	&15.85523 &   2.00 &15.83624     \\
				& 15.90946 &15.87784  &15.86435	&15.85554 &   2.00 &15.83653   \\
				& 33.30471 &33.20221&33.15554	&33.12806 & 1.96 &33.06259   \\

				\bottomrule             
			\end{tabular}
	\end{center}}
	\label{tabla:toro}
\end{table}
\begin{figure}[!h]
	\centering
	\begin{minipage}{0.48\linewidth}\centering
		\includegraphics[scale=0.06,trim= 30cm 2.5cm 30cm 10cm, clip]{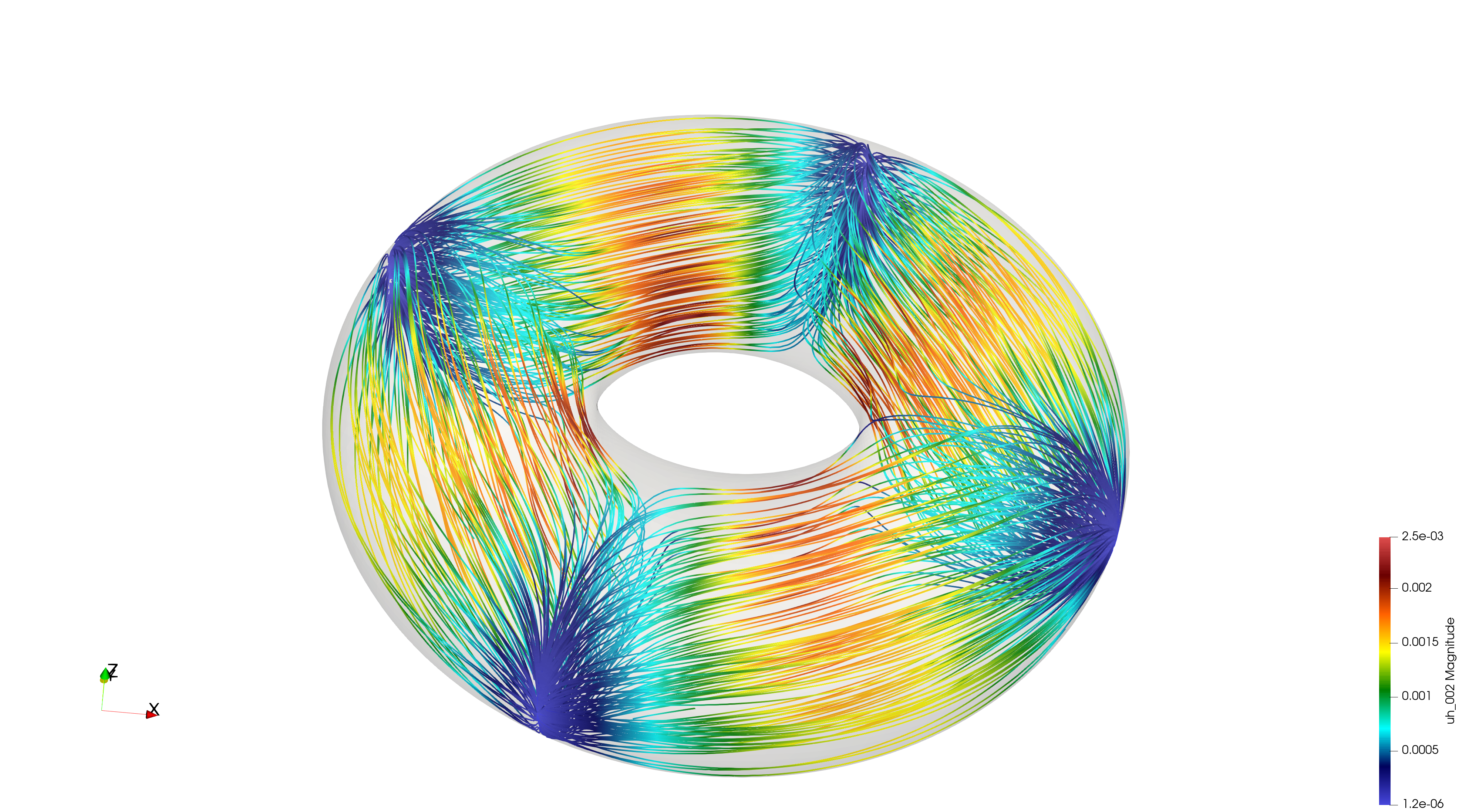}
	\end{minipage}
	\begin{minipage}{0.48\linewidth}\centering
		\includegraphics[scale=0.06,trim= 30cm 2.5cm 30cm 10cm, clip]{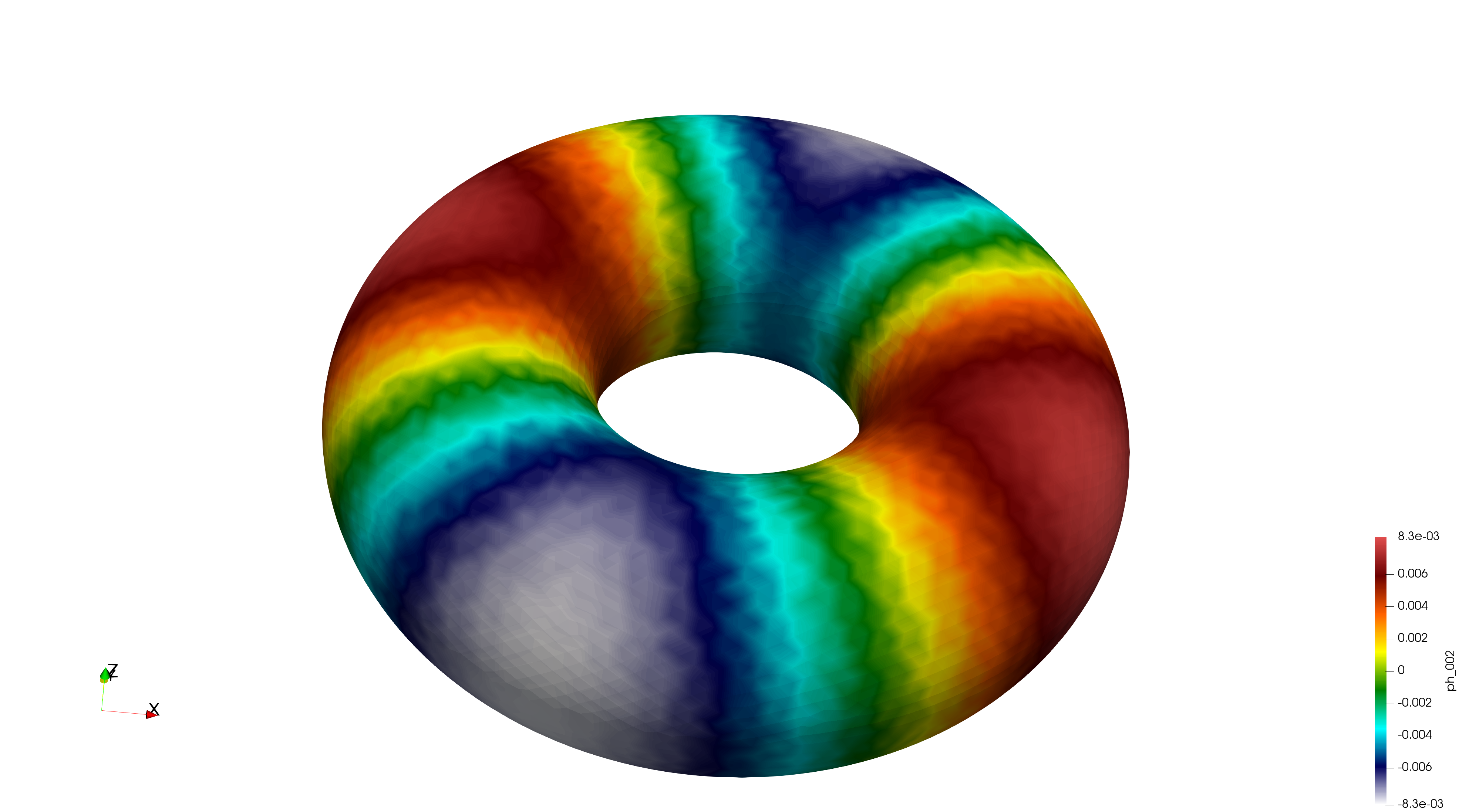}
	\end{minipage}
	\caption{Test 3. Third lowest computed eigenmode $\bu_h$ streamlines (left) and postprocessed pressure (right).}
	\label{fig:toro-modo3}
\end{figure}
\begin{figure}[!h]
	\centering
	\begin{minipage}{0.48\linewidth}\centering
		\includegraphics[scale=0.06,trim= 30cm 2.5cm 30cm 10cm, clip]{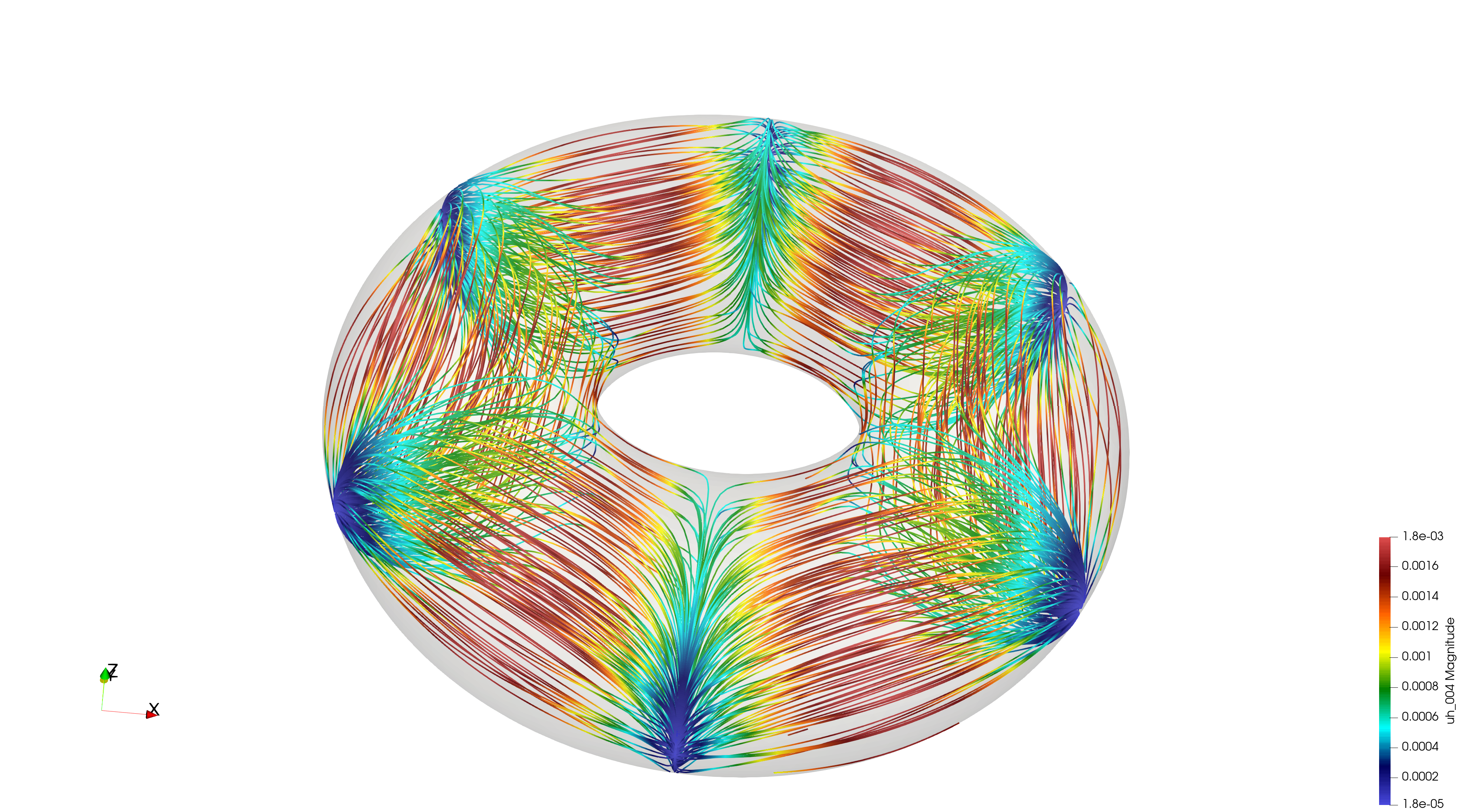}
	\end{minipage}
	\begin{minipage}{0.48\linewidth}\centering
		\includegraphics[scale=0.06,trim= 30cm 2.5cm 30cm 10cm, clip]{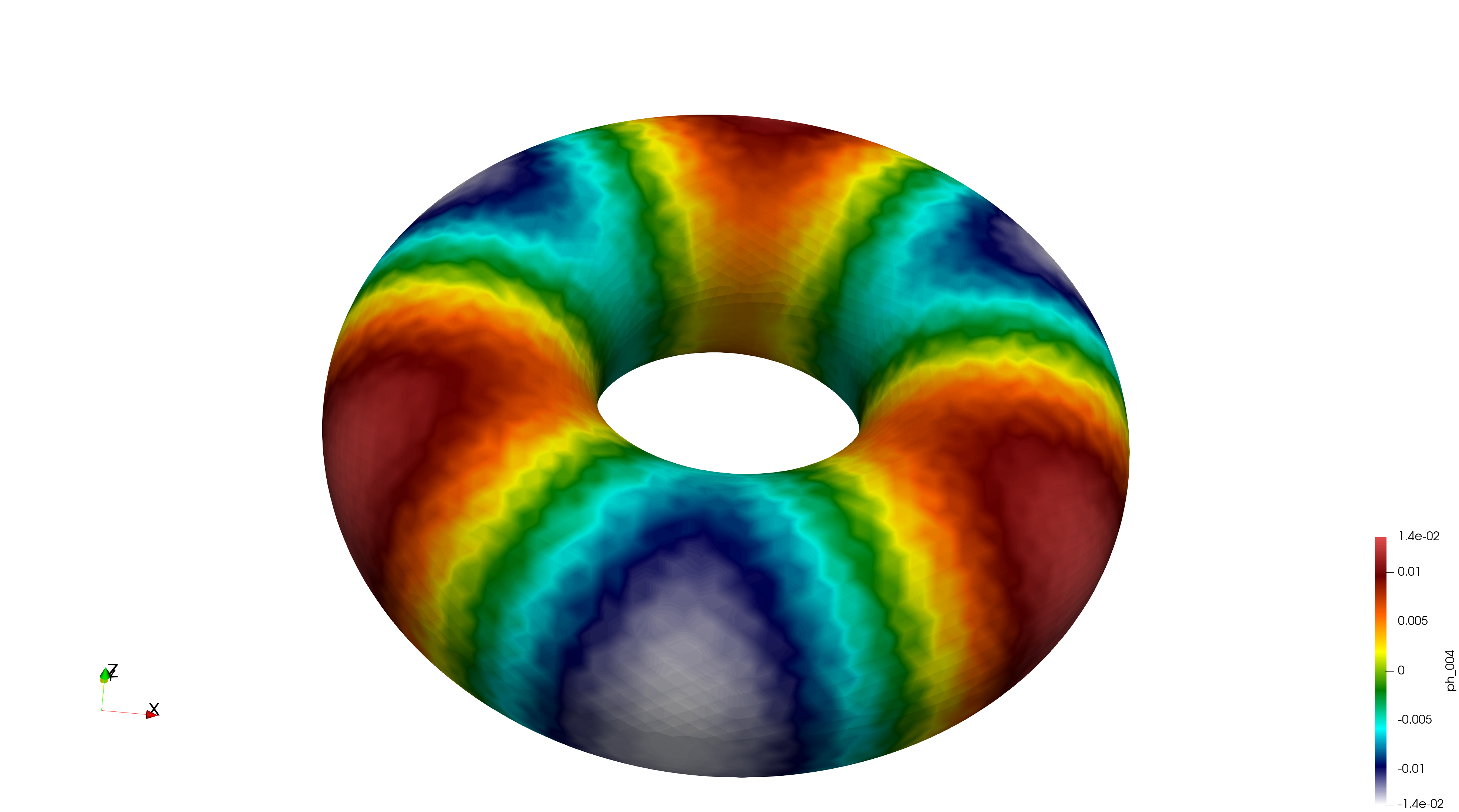}
	\end{minipage}
	\caption{Test 3. Fifth lowest computed eigenmode $\bu_h$ streamlines (left) and postprocessed pressure (right).}
	\label{fig:toro-modo5}
\end{figure}
\section{Test 4. The  pressure formulation}

Since $\varepsilon\in\{-1,0,1\}$ as in the displacement formulation, for the pressure formulation we analyze the spurious
behavior when the DG method is considered. The domain, physical parameters and eigenvalues are the same as those of Test 1. In Tables \ref{tabla:square-SIP-k123-solop}--\ref{tabla:square-SIP-k123-solop2} we report the computed eigenvalues obtained with the SIP method, fixing the mesh refinement in $N=8$ and considering different polynomial degrees and values of $\texttt{a}$. We observe that the results are similar compared with those on Tables \ref{tabla:square-SIP-k123}--\ref{tabla:square-SIP-k123_2}. However, for the smallest stabilization parameters, the pressure formulation gives more spurious eigenvalues when the polynomial degree is $k=1$. 
\begin{table}[!h]
	\centering
	{\setlength{\tabcolsep}{3.8pt}\footnotesize
		\caption{Test 4. Computed eigenvalues $\lambda_{h,i}$ for polynomial degrees $k=1,2,3$, mesh level $N=8$ and different values of $\texttt{a}$ and $\rho(x)$, for the SIP method ($\varepsilon=1$).}
		\begin{tabular}{c|cccc|cccc}
			&\multicolumn{4}{c}{$\rho_1(x,y)$}&\multicolumn{4}{c}{$\rho_2(x,y)$}\\
			\toprule
			\diagbox{k}{a} &$2(\overline{\rho}+\underline{\rho})$ & $4\overline{\rho}$    & $4(\overline{\rho}+\underline{\rho})$    & $8\overline{\rho}$      & $2(\overline{\rho}+\underline{\rho})$ & $4\overline{\rho}$       & $4(\overline{\rho}+\underline{\rho})$       & $8\overline{\rho}$            \\\midrule
			\multirow{10}{0.2cm}{1}
				& 7.85831 & 7.87585 & \fbox{6.28236} & 7.89707 & 8.00791 & 8.00878 & 8.00921 & 8.00965 \\
			& 9.61673 & 9.64631 & 7.99171 & 9.66568 & 10.39198 & 10.39333 & 10.39401 & 10.39470 \\
			& \fbox{11.12628} & \fbox{14.56096} & 9.65834 & 17.78279 & 18.81280 & 18.82297 & 18.82815 & 18.83341 \\
			& 17.58573 & 17.70166 & 17.72448 & 33.38125 & 34.38199 & 34.39806 & 34.40617 & 34.41436 \\
			& 32.74643 & 33.05492 & 33.12913 & 40.25775 & 41.54168 & 41.56437 & 41.57583 & 41.58741 \\
			& \fbox{35.66546} & \fbox{37.81773} & 40.04185 & 43.71383 & 45.60117 & 45.64630 & 45.66926 & 45.69258 \\
			& 39.50152 & 40.23103 & 42.42326 & 49.10155 & 52.15709 & 52.22394 & 52.25799 & 52.29260 \\
			& 42.56402 & 43.36561 & 46.34274 & 76.38572 & 81.30052 & 81.44241 & 81.51355 & 81.58492 \\
			& 47.59880 & 48.28572 & \fbox{51.09714} & 78.06806 & 81.90845 & 82.01209 & 82.06557 & 82.12053 \\
			& \fbox{57.83388} & \fbox{56.48285} & 72.73759 & 89.46875 & 96.68894 & 96.89340 & 96.99754 & 97.10336 \\
			\midrule
			\multirow{10}{0.2cm}{2}
			& 7.83258 & 7.83261 & 7.83264 & 7.83269 & 7.91319 & 7.91319 & 7.91320 & 7.91320 \\
			& 9.59304 & 9.59295 & 9.59300 & 9.59306 & 10.27022 & 10.27022 & 10.27023 & 10.27023 \\
			& 17.42250 & 17.42202 & 17.42253 & 17.42303 & 18.16847 & 18.16851 & 18.16853 & 18.16854 \\
			& 32.34518 & 32.34930 & 32.35180 & 32.35447 & 32.76848 & 32.76863 & 32.76870 & 32.76877 \\
			& 39.14741 & 39.15855 & 39.16265 & 39.16658 & 39.61886 & 39.61907 & 39.61917 & 39.61928 \\
			& 41.88102 & 41.89014 & 41.89664 & 41.90275 & 42.56796 & 42.56834 & 42.56853 & 42.56872 \\
			& 47.02938 & 47.05119 & 47.06061 & 47.06864 & 48.03134 & 48.03189 & 48.03216 & 48.03244 \\
			& \fbox{65.11345} & 71.55640 & 71.59582 & 71.62566 & 72.39744 & 72.39937 & 72.40035 & 72.40134 \\
			& 71.73920 & 73.19101 & 73.21856 & 73.24738 & 73.69022 & 73.69170 & 73.69245 & 73.69320 \\
			& 73.17229 & 82.78775 & 82.83178 & 82.87428 & 83.84312 & 83.84562 & 83.84688 & 83.84816 \\
			\midrule
			\multirow{10}{0.2cm}{3}
			& 7.83253 & 7.83253 & 7.83253 & 7.83253 & 7.91295 & 7.91295 & 7.91295 & 7.91295 \\
			& 9.59288 & 9.59288 & 9.59288 & 9.59288 & 10.26991 & 10.26991 & 10.26991 & 10.26991 \\
			& 17.42060 & 17.42060 & 17.42060 & 17.42060 & 18.16508 & 18.16508 & 18.16508 & 18.16508 \\
			& 32.34439 & 32.34439 & 32.34440 & 32.34440 & 32.75265 & 32.75265 & 32.75265 & 32.75265 \\
			& 39.15585 & 39.15586 & 39.15587 & 39.15587 & 39.60000 & 39.60000 & 39.60000 & 39.60000 \\
			& 41.87267 & 41.87270 & 41.87272 & 41.87274 & 42.52793 & 42.52793 & 42.52793 & 42.52793 \\
			& 47.03560 & 47.03564 & 47.03566 & 47.03568 & 47.97662 & 47.97662 & 47.97662 & 47.97662 \\
			& 71.47621 & 71.47651 & 71.47664 & 71.47679 & 72.19550 & 72.19551 & 72.19551 & 72.19551 \\
			& 73.13362 & 73.13384 & 73.13394 & 73.13406 & 73.51765 & 73.51766 & 73.51766 & 73.51766 \\
			& 82.66900 & 82.66951 & 82.66975 & 82.67002 & 83.53781 & 83.53783 & 83.53784 & 83.53784 \\
			\bottomrule
	\end{tabular}}
	\label{tabla:square-SIP-k123-solop}
\end{table}
\begin{table}[!h]
	\centering
	{\setlength{\tabcolsep}{3.8pt}\footnotesize
		\caption{Test 4. Computed eigenvalues $\lambda_{h,i}$ for polynomial degrees $k=1,2,3$, mesh level $N=8$ and different values of $\texttt{a}$ and $\rho(x)$, for the SIP method ($\varepsilon=1$).}
		\begin{tabular}{c|cccc|cccc}
			&\multicolumn{4}{c}{$\rho_1(x,y)$}&\multicolumn{4}{c}{$\rho_2(x,y)$}\\
			\toprule
			\diagbox{k}{a} &4 & $4(\overline{\rho}+1)$    & 8    & $8(\overline{\rho}+1)$      & 4 & $4(\overline{\rho}+1)$       & 8       & $8(\overline{\rho}+1)$            \\\midrule
			\multirow{10}{0.2cm}{1}
				& 7.87585 & 7.89707 & 7.89707 & 7.91503 & 7.99747 & 8.00897 & 8.00365 & 8.00975 \\
			& 9.64631 & 9.66568 & 9.66568 & 9.68920 & 10.37575 & 10.39362 & 10.38534 & 10.39485 \\
			& \fbox{14.56096} & 17.78279 & 17.78279 & 17.89578 & 18.69974 & 18.82522 & 18.76446 & 18.83457 \\
			& 17.70166 & 33.38125 & 33.38125 & 33.66847 & 34.18685 & 34.40160 & 34.30248 & 34.41615 \\
			& 33.05492 & 40.25775 & 40.25775 & 40.66849 & 41.26919 & 41.56936 & 41.43004 & 41.58995 \\
			& \fbox{37.81773} & 43.71383 & 43.71383 & 44.32293 & 45.09283 & 45.65629 & 45.38552 & 45.69770 \\
			& 40.23103 & 49.10155 & 49.10155 & 49.89565 & 51.41133 & 52.23875 & 51.83909 & 52.30021 \\
			& 43.36561 & 76.38572 & 76.38572 & 78.37028 & 79.63021 & 81.47347 & 80.59982 & 81.60048 \\
			& 48.28572 & 78.06806 & 78.06806 & 79.75233 & 80.75383 & 82.03530 & 81.42428 & 82.13270 \\
			& \fbox{56.48285} & 89.46875 & 89.46875 & 92.09170 & 94.40090 & 96.93872 & 95.71525 & 97.12663 \\
			\midrule
			\multirow{10}{0.2cm}{2}
			& 7.83261 & 7.83269 & 7.83269 & 7.83273 & 7.91316 & 7.91320 & 7.91318 & 7.91320 \\
			& 9.59295 & 9.59306 & 9.59306 & 9.59312 & 10.27018 & 10.27022 & 10.27020 & 10.27023 \\
			& 17.42202 & 17.42303 & 17.42303 & 17.42357 & 18.16806 & 18.16852 & 18.16830 & 18.16855 \\
			& 32.34930 & 32.35447 & 32.35447 & 32.35720 & 32.76667 & 32.76866 & 32.76775 & 32.76879 \\
			& 39.15855 & 39.16658 & 39.16658 & 39.17049 & 39.61629 & 39.61911 & 39.61782 & 39.61930 \\
			& 41.89014 & 41.90275 & 41.90275 & 41.90920 & 42.56350 & 42.56842 & 42.56612 & 42.56876 \\
			& 47.05119 & 47.06864 & 47.06864 & 47.07696 & 48.02489 & 48.03201 & 48.02868 & 48.03250 \\
			& 71.55640 & 71.62566 & 71.62566 & 71.65676 & 72.37484 & 72.39979 & 72.38805 & 72.40155 \\
			& 73.19101 & 73.24738 & 73.24738 & 73.27627 & 73.67168 & 73.69203 & 73.68279 & 73.69336 \\
			& 82.78775 & 82.87428 & 82.87428 & 82.91733 & 83.81269 & 83.84617 & 83.83073 & 83.84844 \\
			\midrule
			\multirow{10}{0.2cm}{3}
			& 7.83253 & 7.83253 & 7.83253 & 7.83253 & 7.91295 & 7.91295 & 7.91295 & 7.91295 \\
			& 9.59288 & 9.59288 & 9.59288 & 9.59288 & 10.26991 & 10.26991 & 10.26991 & 10.26991 \\
			& 17.42060 & 17.42060 & 17.42060 & 17.42060 & 18.16508 & 18.16508 & 18.16508 & 18.16508 \\
			& 32.34439 & 32.34440 & 32.34440 & 32.34441 & 32.75264 & 32.75265 & 32.75264 & 32.75265 \\
			& 39.15586 & 39.15587 & 39.15587 & 39.15588 & 39.60000 & 39.60000 & 39.60000 & 39.60000 \\
			& 41.87270 & 41.87274 & 41.87274 & 41.87275 & 42.52792 & 42.52793 & 42.52793 & 42.52793 \\
			& 47.03564 & 47.03568 & 47.03568 & 47.03570 & 47.97660 & 47.97662 & 47.97661 & 47.97662 \\
			& 71.47651 & 71.47679 & 71.47679 & 71.47695 & 72.19541 & 72.19551 & 72.19546 & 72.19551 \\
			& 73.13384 & 73.13406 & 73.13406 & 73.13419 & 73.51757 & 73.51766 & 73.51762 & 73.51766 \\
			& 82.66951 & 82.67002 & 82.67002 & 82.67031 & 83.53763 & 83.53783 & 83.53774 & 83.53784 \\
			\bottomrule
	\end{tabular}}
	\label{tabla:square-SIP-k123-solop2}
\end{table}

\section{Test 5. A 3D benchmark}
In order to obtain more information of the discrete formulations  \eqref{spectdisc} and \eqref{eq:pression_discreto}, now we compare some aspects of the computational cost of these schemes. To achieve this goal, we resort to the debug report delivered by FENICS through the command \texttt{set\_log\_level(LogLevel.DEBUG)}, where we can recover several debug information, among them, the assembly time, the non-zero entries of the assembled matrices and the sparsity percentage. We solve the eigenproblems in the unit cube $\Omega:=(0,1)^2$ using uniform refinements, where the number of cells scales as $6(N+1)^3$.  It is clear that for the cases studied, the generalized eigenvalue system is of the form $\boldsymbol{K_u}\bu=\lambda\boldsymbol{M_u}\bu$ for \eqref{spectdisc} and $\boldsymbol{K_p}p=\lambda\boldsymbol{M_p}p$ for \eqref{eq:pression_discreto}.  In this experiment, we denote by $K_{0,\bu}$ and $M_{0,\bu}$ the number of non-zero entries of $\boldsymbol{K_u}$ and $\boldsymbol{M_u}$ in each refinement. Similarly, we denote by $K_{0,p}$ and $M_{0,p}$ the number of non-zero entries of $\boldsymbol{K_p}$ and $\boldsymbol{M_p}$. In Figure \ref{fig:bench-tiempo} can observe the behavior of the assembly and solution times, in seconds, of the eigenvalue system in both cases, taken as the average of 5 continuous runs. It is clear that solving \eqref{eq:pression_discreto} is considerably cheaper, mainly because of the number of degrees of freedom ($\texttt{dof}$) in the problem. This is followed by the results depicted in Figure \ref{fig:bench-sparsity}, where we observe the number of non-zero inputs with respect to the number of degrees of freedom, and also the ratio between the non-zero inputs and \texttt{dof}$^2$. The $y-$axis scale is measured with the logarithm of the computed values. Note that the ratio between non-zero entries and $\texttt{dof}^2$ of the matrices associated with \eqref{eq:pression_discreto} are lower than those of \eqref{spectdisc}, which is expected since \eqref{eq:pression_discreto} is associated with fewer degrees of freedoms.
\begin{figure}[!h]
	\centering
	\begin{minipage}{0.48\linewidth}\centering
		\includegraphics[scale=0.17]{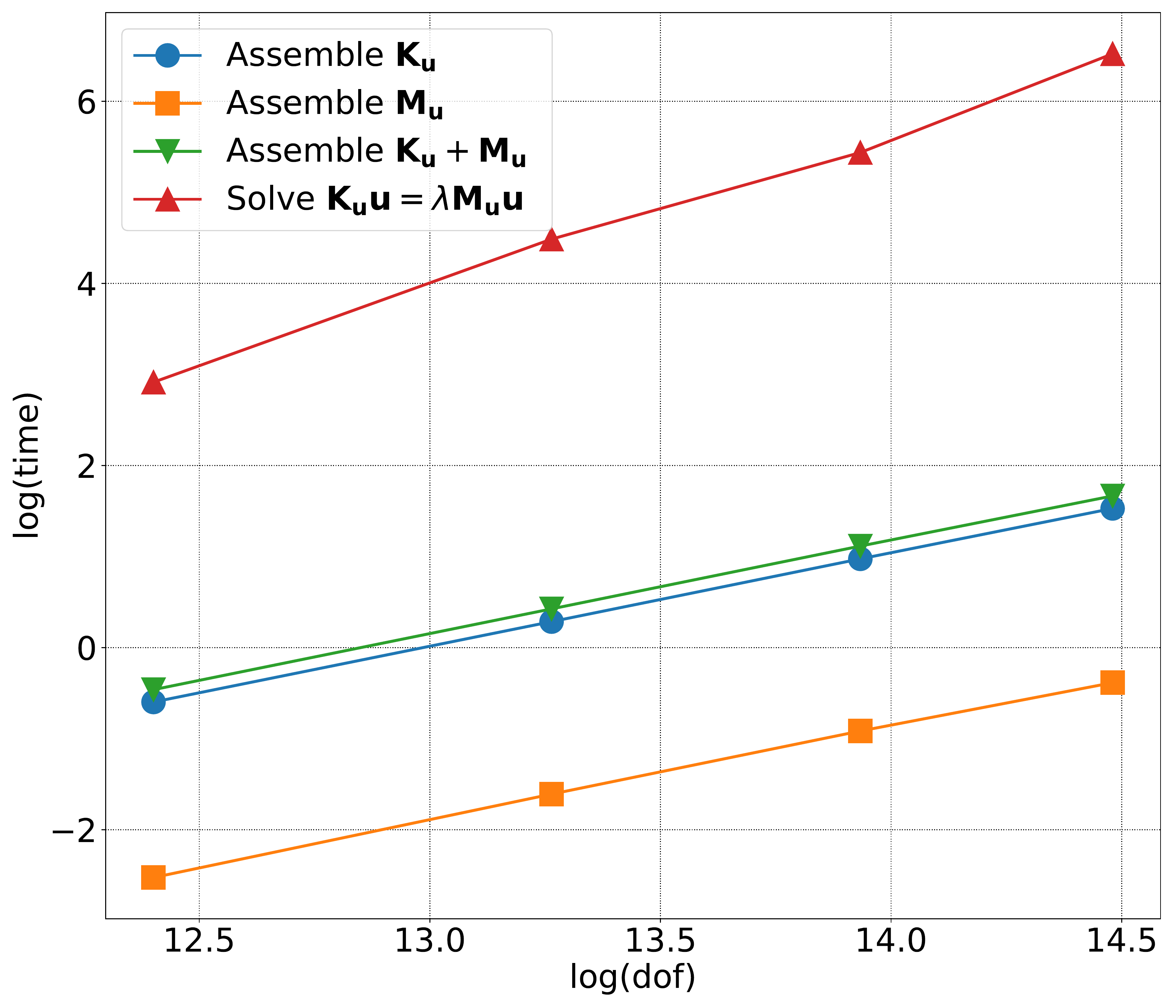}
	\end{minipage}
	\begin{minipage}{0.48\linewidth}\centering
		\includegraphics[scale=0.17]{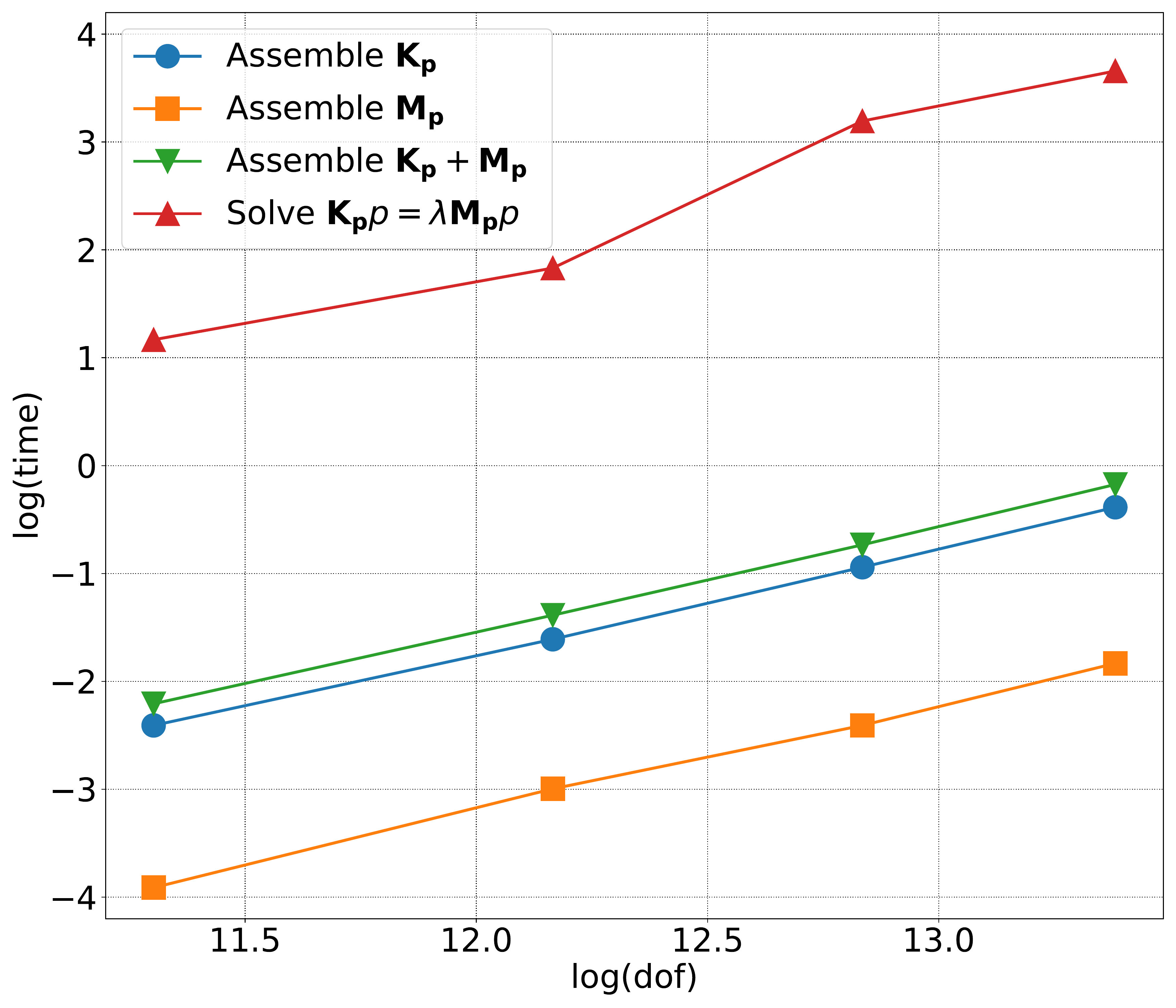}
	\end{minipage}
	\caption{Test 5. Comparison of time employed by the velocity and pressure schemes when assembling and solving the eigenproblems in a 3D unit cube with uniform refinements. Left: velocity formulation. Right: pure pressure formulation.}
	\label{fig:bench-tiempo}
\end{figure}
\begin{figure}[!h]
	\centering
	\begin{minipage}{0.48\linewidth}\centering
		\includegraphics[scale=0.17]{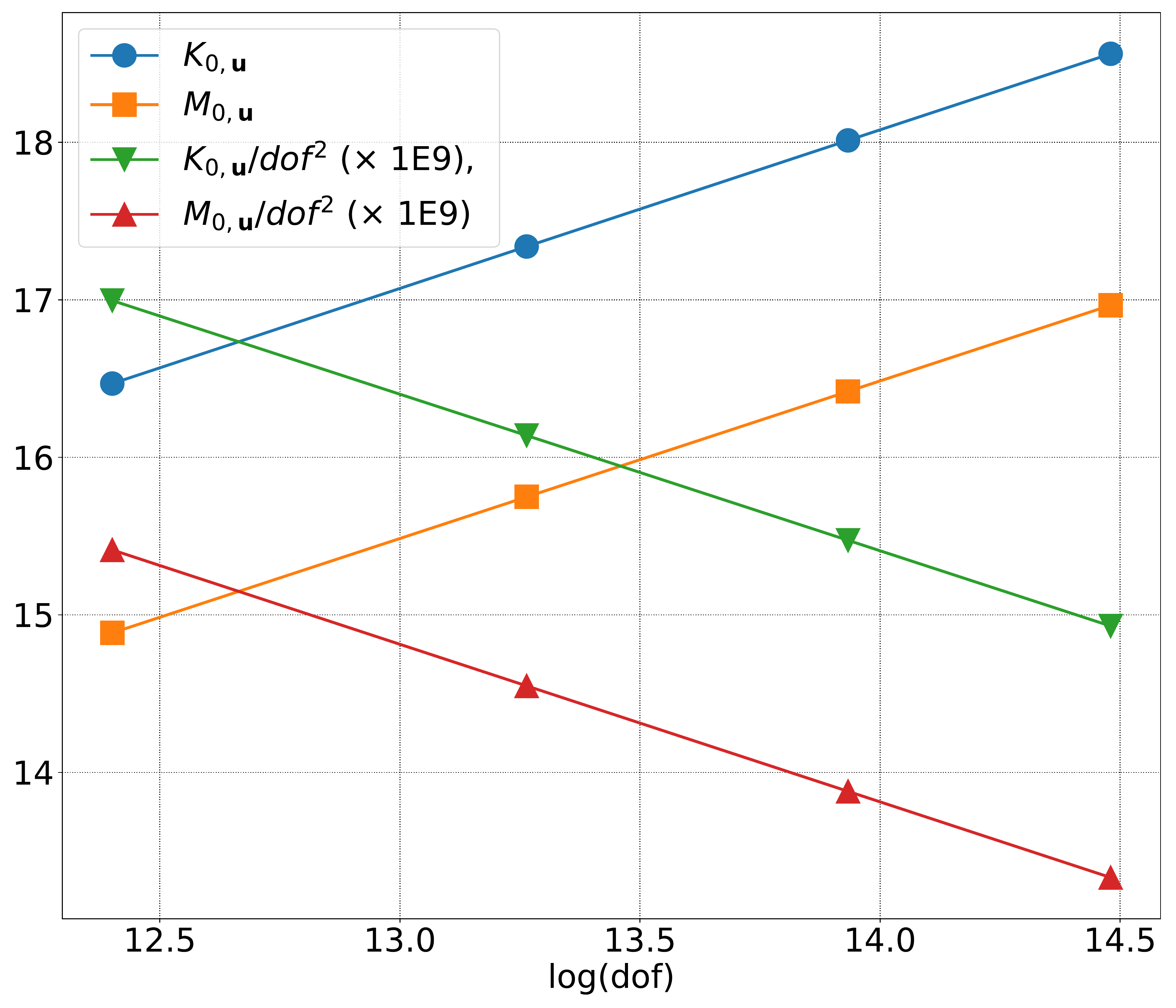}
	\end{minipage}
	\begin{minipage}{0.48\linewidth}\centering
		\includegraphics[scale=0.17]{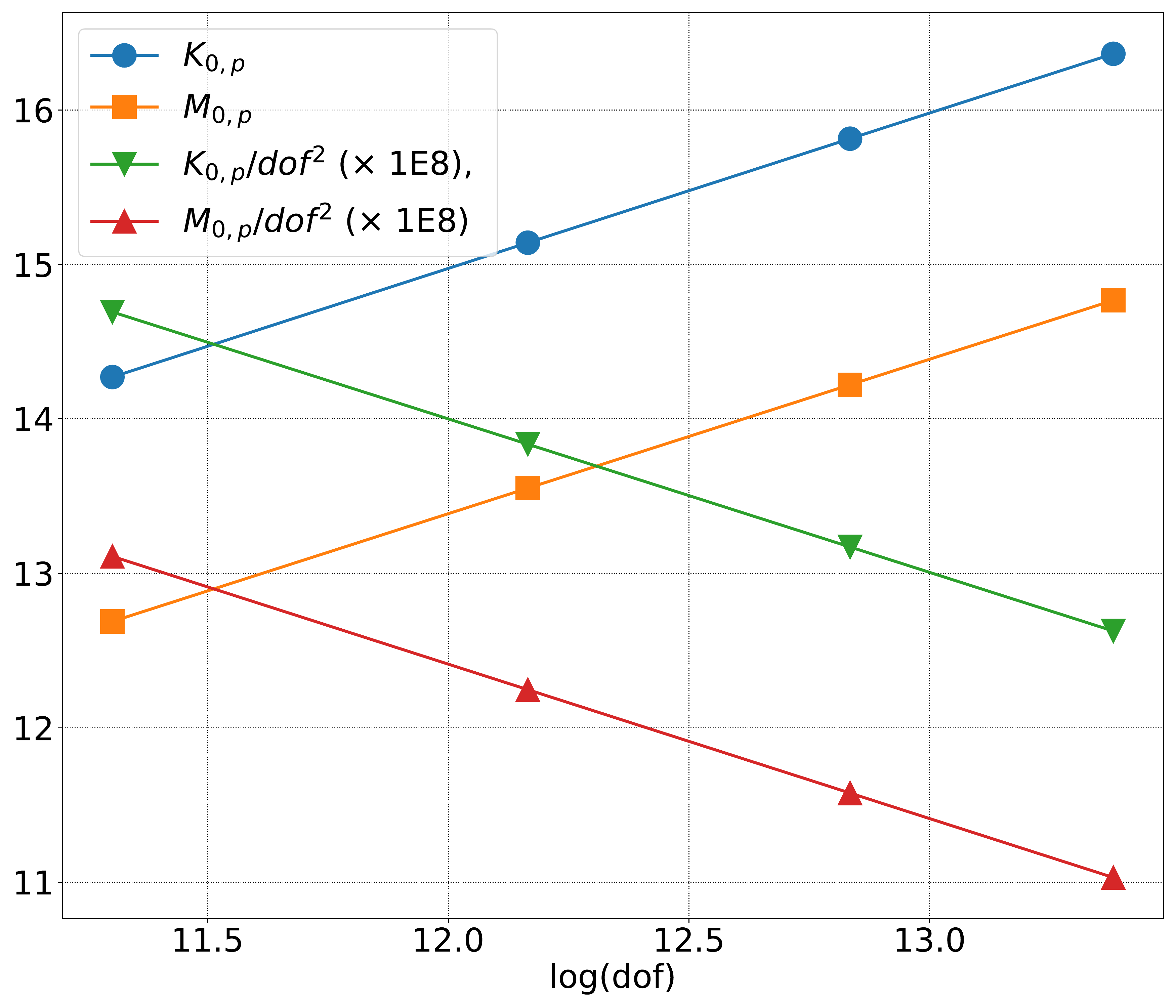}
	\end{minipage}
	\caption{Test 5. Comparison of the sparsity in the the velocity and pressure schemes on each mesh level for the 3D unit cube with uniform refinements. Left: velocity formulation. Right: pure pressure formulation.}
	\label{fig:bench-sparsity}
\end{figure}
\section{Conclusions}
We have presented DG methods to approximate the eigenvalues and eigenfunctions of the acoustic eigenvalue problem. The proposed  symmetric and nonsymmetric methods lead to an accurate computation and approximation of the corresponding eigenfunctions. The acoustic problem has as unknowns the displacement and pressure of the fluid. These DG methods were considered for two formulations: a formulation where the  displacement  is the only unknown and other where the pressure is the unknown. For both approaches, the remaining unknown is always recovered with a postprocess of the solution operator under study. A spurious analysis is presented in two dimensions, where we compare the effects of the stabilization parameter 
in the symmetric and nonsymmetric methods when the spectrum es computed, performing both the displacement and pressure formulations. The numerical tests also reveal that, for a correct choice of the aforementioned stabilization, the convergence order is the expected in two and three dimensions.
On the other hand, a benchmark test shows that the pressure formulation is less expensive in terms of computational costs compared with the displacement formulation. This conclusion is expectable since the pure pressure problem is only scalar. However, in both formulations, the eigenvalues are exactly the same, together with the computed convergence orders for both, the symmetric or nonsymmetric approaches.

\bibliographystyle{siamplain}
\bibliography{bib_LOQ}
\end{document}